\documentclass{amsart}%
\usepackage{palatino, mathpazo}
\usepackage{amsfonts}
\usepackage{amsmath}
\usepackage{amssymb,latexsym,xcolor}
\usepackage{graphicx}
\usepackage[mathscr]{eucal}
\usepackage{harpoon}
\usepackage{amssymb}
\usepackage[
linktocpage=true,colorlinks,citecolor=magenta,linkcolor=blue,urlcolor=magenta]{hyperref}
\newtheorem{theorem}{Theorem}[section]

\newtheorem{corollary}[theorem]{Corollary}
\newtheorem{definition}[theorem]{Definition}
\newtheorem{example}[theorem]{Example}
\newtheorem{lemma}[theorem]{Lemma}
\newtheorem{Proposition}[theorem]{Proposition}
\newtheorem{Theorem}{Theorem}

\theoremstyle{remark}
\newtheorem{remark}[theorem]{Remark}

\numberwithin{equation}{section}

\def\R{\mathbb R}

\def\bb{\beta}
\def\dd{\delta}

\def\e{\varepsilon}
\def\Om{\Omega}

\def\iy{\infty}

\def\la{\lambda}

\def\bt{\begin{theorem}}
\def\et{\end{theorem}}
\def\bl{\begin{lemma}}
\def\el{\end{lemma}}
\def\bd{\begin{definition}}
\def\ed{\end{definition}}
\def\bc{\begin{corollary}}
\def\ec{\end{corollary}}
\def\bprop{\begin{Proposition}}
\def\eprop{\end{Proposition}}
\def\bp{\begin{proof}}
\def\ep{\end{proof}}
\def\bx{\begin{example}}
\def\ex{\end{example}}
\def\br{\begin{remark}}
\def\er{\end{remark}}
\def\be{\begin{equation}}
\def\ee{\end{equation}}
\def\bal{\begin{align}}
\def\bn{\begin{enumerate}}
\def\en{\end{enumerate}}
\def\eal{\end{align}}

\def\bg{\begin{align*}}
\def\eg{\end{align*}}
\def\bcs{\begin{cases}}
\def\ecs{\end{cases}}

\def\RNum#1{\uppercase\expandafter{\romannumeral #1\relax}}

\def\RN{\mathbb R^N}

\def\bean{\begin{eqnarray*}}
\def\eean{\end{eqnarray*}}

\begin{document}
\title[A critical elliptic system in a punctured ball]{Qualitative analysis of positive singular solutions for a critical elliptic system in a punctured ball}
\author{Zhijie Chen}
\address{Department of Mathematical Sciences, Yau Mathematical Sciences Center,
Tsinghua University, Beijing, 100084, China}
\email{zjchen2016@tsinghua.edu.cn}

\author{Hui Yang}
\address{School of Mathematical Sciences, Shanghai Jiao Tong University, Shanghai 200240, China}
\email{hui-yang@sjtu.edu.cn}

\author{Junzhao Yu}
\address{Department of Mathematical Sciences,
Tsinghua University, Beijing, 100084, China}
\email{yjz21@mails.tsinghua.edu.cn}


\begin{abstract}
We study qualitative properties of positive singular solutions to a weakly coupled elliptic system with critical exponents in a punctured ball. 
We give a sharp criterion on the removablity of the isolated singularity.
We prove that semi-singular solutions (i.e., solutions with one component bounded near the singularity and the other component unbounded near the singularity) do not exist for the dimensions $N\geq 4$ but do exist for $N=3$. Asymptotic symmetry and sharp pointwise estimates are also proved for singular solutions. These generalizes some classical results of (Caffarelli, Gidas and Spruck, Comm. Pure Appl. Math, 1989) to the weakly coupled system. Moreover, for the weakly coupled system, we demonstrate a novel phenomenon that does not arise in the scalar equation.
\end{abstract}
\maketitle

\section{Introduction}

In this paper, we study positive singular solutions of the following weakly coupled elliptic system with critical exponents in a punctured ball:
\be\label{eq0}
\begin{cases}
\begin{split}-\Delta u =
\mu_1 u^{2^\ast-1}+\beta u^{\frac{2^\ast}{2}-1} v^{\frac{2^\ast}{2}}\\
-\Delta v =\mu_2 v^{2^\ast-1}+\beta v^{\frac{2^\ast}{2}-1}u^{\frac{2^\ast}{2}}\\
\end{split}  \quad   \text{in $B_2\setminus\{0\}$},\\
u, v> 0 \quad\hbox{and}\quad
u, v\in C^2(B_2\setminus\{0\}),\end{cases}\ee
where $\mu_1, \mu_2$, $\bb$ are all positive constants, $B_r:=\{x\in\RN\,|\,|x|<r\}$ is a ball, $N\ge 3$ and $2^\ast:=\frac{2N}{N-2}$ is the Sobolev critical exponent.

System (\ref{eq0}) is the critical case of the following general problem
\be\label{eq002}
\begin{cases}
\begin{split}-\Delta u =\la_1 u +
\mu_1 u^{2p-1}+\beta u^{p-1}v^{p}\\
-\Delta v =\la_2 v+\mu_2 v^{2p-1}+\beta v^{p-1} u^{p}\\
\end{split}  \quad   \text{in $\Om$},\\
u, v> 0 \,\,\hbox{in $\Om$},\end{cases}\ee
where $\Om\subset\RN$ is a domain, $p>1$ and $2p\le 2^\ast$ if $N\ge 3$.
In the case $p=2$, the cubic system (\ref{eq002}) arises from
nonlinear optics and Bose-Einstein condensates. We refer the reader to the survey articles \cite{F, KL},  which
also contain information about the physical relevance of non-cubic nonlinearities.
Since the pioneering work of Lin and Wei \cite{LW1}, the existence and properties of solutions of \eqref{eq002} with general $2<2p\leq 2^\ast$ has been widely studied in the past two decades, and it seems impossible for us to list all the references. See e.g. \cite{ChenLi, CZ, CZ2, GL, LW, long2020, MMP, NTTV, PS, PT, QS, S, TV, wei2020} and references therein.

The study of singular solutions has been a central topic in PDEs for a long time (cf. Aviles \cite{Aviles} and Lions \cite{Lions}). 
One of our motivations of studying \eqref{eq0} comes from the seminal work of Caffarelli, Gidas and Spruck \cite{CGS}, where among other things, they studied local properties of solutions for the scalar equation with critical exponent
\be\label{eq1-2}
\begin{cases}
-\Delta u = u^{2^\ast-1}  \quad   \text{in $B_2\setminus\{0\}$},\\
u> 0 \quad\hbox{and}\quad
u\in C^2(B_2\setminus\{0\}).\end{cases}\ee
Let $\mathbb{S}:=\{\theta\in \RN\;|\; |\theta|=1\}$. Define the spheical average
\be\label{eq1-3}\bar{u}(x):=\frac1{|\mathbb{S}|}\int_{\mathbb{S}}u(|x|\theta)\,d\theta,\ee
and write $\bar{u}(x)=\bar{u}(|x|)$ for convenience. By the Pohozaev identity, we have for $0<s<r$ that
$P(r; u)-P(s; u)=0$,
where
$$P(r; u):=\int_{\partial B_r}\left(\frac{N-2}{2}u\frac{\partial u}{\partial \nu}-\frac{r}{2}|\nabla u|^2+r\left|\frac{\partial u}{\partial \nu}\right|^2+\frac{r}{2^\ast}u^{2^\ast}\right)d\sigma,$$
and $\nu$ is the unit outer normal of $\partial B_r$. Hence $P(r; u)$ is a constant independent of $r$, and we denote this Pohozaev invariant by $P(u)$. Then Caffarelli, Gidas and Spruck \cite{CGS} proved the following remarkable results.

\begin{Theorem}\cite{CGS}\label{thm-A} Let $u$ be a solution of \eqref{eq1-2}. Then $P(u)\leq 0$ and 
$$u(x)=\bar u(|x|)(1+O(|x|))\quad\text{for $|x|>0$ small}.$$
Furthermore,
\begin{itemize}
\item[(1)] $P(u)=0$ if and only if the singularity $0$ is removable for $u$, i.e., $u\in C^2(B_2)$.
\item[(2)] If $P(u)<0$, then there exist $C_2>C_1>0$ such that
$${C}_1|x|^{\frac{2-N}{2}}\le u(x)\le {C}_2|x|^{\frac{2-N}{2}},\quad\text{for $|x|>0$ small},$$
and there is a singular solution $v(x)=v(|x|)$ of
\be\label{eq1-4}
\begin{cases}
-\Delta u = u^{2^\ast-1}  \quad   \text{in $\mathbb R^N\setminus\{0\}$},\\
u> 0,\; u(x)\to+\infty \;\hbox{as $x\to 0$, }\quad
u\in C^2(\mathbb R^N\setminus\{0\}),\end{cases}\ee
such that
$$u(x)=v(|x|)(1+o(1))\quad\text{for $|x|>0$ small}.$$
\end{itemize}
\end{Theorem}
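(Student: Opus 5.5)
The plan follows the Caffarelli--Gidas--Spruck strategy: reduce to the cylinder, establish radial symmetry at the singularity, and classify through the Pohozaev invariant.

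\textbf{Cylinder and moving planes.} Set $t=-\ln|x|$, $\theta=x/|x|$ and $w(t,\theta)=|x|^{\frac{N-2}{2}}u(x)$. Then $w$ solves
\[
w_{tt}+\Delta_{\mathbb S}w-\tfrac{(N-2)^2}{4}w+w^{2^\ast-1}=0 \quad\text{on } (\ln\tfrac12,\infty)\times\mathbb S .
\]
The first task is the a priori upper bound $u(x)\le C|x|^{\frac{2-N}{2}}$, i.e.\ $w\le C$. I would prove it by the standard blow-up argument. If it failed, rescale around points where $|x|^{\frac{N-2}{2}}u$ is large, using a doubling lemma. The limits would be positive entire solutions of $-\Delta U=U^{2^\ast-1}$. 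These are classified as standard bubbles, and one then derives a contradiction with the Harnack-type control coming from the moving plane argument.

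Next I would run the moving plane method after a Kelvin transform centred at points near the singularity. This gives asymptotic radial symmetry in the quantitative form $|\nabla_\theta w|\le C e^{-t}w$. This is the core CGS step. It yields $u(x)=\bar u(|x|)(1+O(|x|))$, together with the Harnack inequality $\max_{\partial B_r}u\le C\min_{\partial B_r}u$. I expect this step, and in particular controlling the perturbation from the Kelvin transform uniformly near $0$, to be the main obstacle. For it I would follow the CGS comparison with the harmonic function $\varepsilon(|x|^{2-N}-\dots)$ used to absorb the boundary.

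\textbf{ODE reduction and the sign of $P(u)$.} With the angular derivatives exponentially small, $\bar w(t)$ satisfies the Fowler ODE up to error $O(e^{-t})$:
\[
\bar w''-\tfrac{(N-2)^2}{4}\bar w+\bar w^{2^\ast-1}=O(e^{-t}).
\]
Expressed in these variables, the Pohozaev invariant is $P(u)=c_N H(w)+o(1)$, where $c_N>0$ and $H$ is the averaged Fowler energy
\[
H=\tfrac12 w_t^2-\tfrac{(N-2)^2}{8}w^2+\tfrac1{2^\ast}w^{2^\ast}.
\]
Since $P$ is independent of $r$, $H(\bar w(t))\to P(u)/c_N$. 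Bounded positive orbits of the autonomous Fowler system have $H\in[H_{\min},0]$; the value $H=0$ corresponds to the homoclinic orbit and the origin. Because $w$ is bounded, its $\omega$-limit consists of such orbits, and therefore $P(u)\le 0$.

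\textbf{The two cases.} If $P(u)<0$, the limit orbit is a periodic orbit bounded away from $0$, which gives $C_1\le w\le C_2$, i.e.\ the two-sided bound. Convergence of $\bar w(t)$ to a specific Fowler solution $\bar w_0(t+T)$, with a fixed phase, follows because the $O(e^{-t})$ perturbation is integrable. I would do this by a variation-of-parameters argument on the linearized periodic equation. The only neutral direction there is the phase $\bar w_0'$; its drift is controlled by $\int^\infty e^{-s}\,ds<\infty$, which fixes $T$. This gives $u=v(|x|)(1+o(1))$ with $v$ a Fowler solution of \eqref{eq1-4}.

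If $P(u)=0$, the orbit lies on $H=0$, so either $w\to0$ or $w$ approaches the homoclinic orbit. The second possibility forces $w\to0$ as well, after checking that the homoclinic can be traversed only once thanks to the exponential error. Hence $u=o(|x|^{\frac{2-N}{2}})$. A bootstrap using the integral equation and the singularity removal lemma for $-\Delta u=Vu$ with $V=u^{2^\ast-2}\in L^{N/2+\delta}$ locally then gives $u\in L^\infty$ near $0$, and elliptic regularity gives $u\in C^2(B_2)$. Conversely, if $u$ is smooth then $P(r;u)\to0$ as $r\to0$, so $P(u)=0$.
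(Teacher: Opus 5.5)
Your architecture is the Caffarelli--Gidas--Spruck one, which the paper transplants to the system in Sections 2, 3 and 5; Theorem A itself is only quoted from \cite{CGS}. The shared steps are moving planes after a Kelvin transform, Fowler variables, the Pohozaev invariant as the limit of the averaged energy, and the $\omega$-limit argument for $P(u)\le 0$ and for the two-sided bound when $P(u)<0$. Your blow-up proof of the upper bound is a heavier but workable substitute for the integral/Jensen/H\"older estimate of Lemma \ref{lemma2-6}. It needs the moving planes run first: the contradiction uses near-constancy of $u$ on spheres, which moving planes give without any upper bound. Conversely, the gradient form $|\nabla_\theta w|\le Ce^{-t}w$ does need the upper bound (cf.\ Lemma \ref{lemma2-7}).

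\textbf{The genuine gap is in part (1).} Everything rests on the claim that the homoclinic ``can be traversed only once thanks to the exponential error'', and no argument is given. With the errors as you record them ($P(u)=c_NH+o(1)$, an additive $O(e^{-t})$ in the ODE), it cannot be closed. Near the origin $\bar w$ is tiny, so an absolute defect $O(e^{-t})$ can dominate $H\approx-\frac{\delta^2}{2}\bar w^2$ (with $\delta=\frac{N-2}{2}$) at the bottom of an excursion, and nothing stops repeated passages along the loop. The missing ingredient is that the energy defect is \emph{relatively} small. From $w=\bar w(1+O(e^{-t}))$, $|\bar w'|\le C\bar w$ and $|\partial_t(w-\bar w)|+|\nabla_\theta w|\le Ce^{-t}\bar w$, the conserved cylindrical energy gives $P(u)=|\mathbb S|\,H(t)+O(e^{-t})\,\bar w(t)^2$, the scalar form of \eqref{eq3-15}. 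Now suppose $P(u)=0$ and $t_0$ is a local minimum of $\bar w$ with $\bar w(t_0)=m$ small. Then $\bar w'(t_0)=0$ gives $H(t_0)=-\frac{\delta^2}{2}m^2+\frac{m^{2^*}}{2^*}\le-\frac{\delta^2}{4}m^2$, contradicting $|H(t_0)|\le Cm^2e^{-t_0}$ for $t_0$ large. So $\bar w$ has no small valleys and is eventually monotone; since its $\omega$-limit contains the origin, $\bar w\to0$. This is the scalar shadow of the valley analysis in Lemmas \ref{lemma2-8} and \ref{lemma3-2}, where the normalized profile $\cosh(\delta s)$ has energy $-\delta^2/2\ne0$.

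\textbf{The last step of (1) also has a hole.} From $u=o(|x|^{\frac{2-N}{2}})$ you only get $u^{2^*-2}=o(|x|^{-2})$, which need not lie in $L^{N/2+\epsilon}$ (think of $|x|^{-2}/\log\frac{1}{|x|}$). So the removability lemma you cite does not yet apply, and you first need a decay rate. One way: once $\bar w$ is small, \eqref{eq3-17} gives $\bar w''\ge\frac{\delta^2}{2}\bar w$, and comparison with $\bar w(T)e^{-\delta(t-T)/\sqrt2}$ yields exponential decay. More directly, argue as in Lemma \ref{lemma3-1} with $y=\bar w'+\delta\bar w>0$. This gives $\bar w'+\delta\bar w\le C\bar w^{2^*-1}$, hence $e^{\delta t}\bar w\le C$, i.e.\ $u$ is bounded.

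\textbf{A smaller inaccuracy in (2).} The paper does not reprove this part (it is open for the system). Linearizing the Fowler equation about a non-constant periodic orbit gives a Jordan-block monodromy, because the period depends on the energy. So besides the phase mode $\bar w_0'$ there is a linearly growing mode, and a naive variation of parameters against an $O(e^{-t})$ forcing produces a secular term. That term is removed by the energy information $H(t)-P(u)/|\mathbb S|=O(e^{-t})$. In action--angle variables the angle satisfies $\phi'=\Omega(H)+O(e^{-t})$, which gives the phase limit cleanly; the cylindrical case $H_\infty=H_{\min}$ must be treated separately.
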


All singular solutions of \eqref{eq1-4} were also completely classified in \cite{CGS} by ODE analysis.
Since \cite{CGS}, 
equation (\ref{eq1-2}) and similar problems have received great interest and have been widely studied, and it seems impossible for us to list all the references. See e.g. \cite{Ado, ADGW, ACDFGW, CJSX, COS, CL1, CL2, CL3, ChenLin1, FK, GKS, GHWW, HanQ, HXZ, KMPS, JX, Li1996, TZ, YZ} and references therein.
For example, Caffarelli et al \cite{CJSX} studied the fractional order equation $(-\Delta)^\sigma u=u^{\frac{N+2\sigma}{N-2\sigma}}$ with an isolated singularity, where $\sigma\in (0,1)$; 
Jin and Xiong \cite{JX} studied the higher order equation $(-\Delta)^m u=u^{\frac{N+2m}{N-2m}}$ with an isolated singularity;
Caju, do \'{O} and Santos \cite{COS}, and Ghergu, Kim and Shahgholian \cite{GKS} both studied the elliptic system \be\label{eq05}-\Delta {\mathbf u}=|\mathbf u|^{2^\ast-2}\mathbf u\ee with an isolated singularity, where $\mathbf u=(u_1,\cdots, u_m)$, and so on.

Concerning the weakly coupled system \eqref{eq0} with an isolated singularity, 
 Chen and Lin \cite{ChenLin1} studied the problem in $\mathbb{R}^N\setminus\{0\}$, namely
\be\label{eq1-5}
\begin{cases}
\begin{split}-\Delta u =
\mu_1 u^{2^\ast-1}+\beta u^{\frac{2^\ast}{2}-1} v^{\frac{2^\ast}{2}}\\
-\Delta v =\mu_2 v^{2^\ast-1}+\beta v^{\frac{2^\ast}{2}-1}u^{\frac{2^\ast}{2}}\\
\end{split}  \quad   \text{in $\RN\setminus\{0\}$},\\
u, v> 0 \quad\hbox{and}\quad
u, v\in C^2(\RN\setminus\{0\}),\end{cases}\ee
which can be seen as a generalization of the scalar equation \eqref{eq1-4}.
They proved that any solution of \eqref{eq1-5} is radially symmetric, so \eqref{eq1-5} could also be studied by ODE analysis. However, the corresponding ODE of \eqref{eq1-5} is the following system\begin{equation}\label{eq30-017}
\begin{cases}
v_1''-\dd^2v_1+\mu_1v_1^{2^\ast-1}+\bb v_1^{\frac{2^\ast}{2}-1}v_2^{\frac{2^\ast}{2}}
=0,\\
v_2''-\dd^2v_2+\mu_2v_2^{2^\ast-1}+\bb v_2^{\frac{2^\ast}{2}-1}v_1^{\frac{2^\ast}{2}}
=0,
\end{cases} t\in\mathbb{R}, 
\end{equation} which proposes many additional difficulties compared to the corresponding ODE 
\begin{equation}\label{eq-ode1}
v''-\dd^2v+v^{2^\ast-1}
=0,\quad t\in\mathbb{R},
\end{equation}
of the scalar equation \eqref{eq1-4}. For example, when the Pohozaev invariant of  \eqref{eq1-4} is negative, it is easy to see that the corresponding solution $v$ of \eqref{eq-ode1} is a periodic function, from which all singular solutions of \eqref{eq1-4} can be completely classified; see \cite{CGS}. However,
when the Pohozaev invariant $D(u,v)<0$ (see \eqref{eq7} below for the definition), it is too difficult to prove whether the corresponding solution $(v_1, v_2)$ of \eqref{eq30-017} is periodic or not. Indeed, some numerical simulations suggest that this should not be true, that is, the structure of singular solutions of \eqref{eq1-5} might be very complicated.
Due to this reason, although all entire solutions (i.e., $u, v\in C^2(\mathbb{R}^N)$) of \eqref{eq1-5}  have been completely classified in \cite{ChenLi, GL}, the classification of singular solutions of \eqref{eq1-5} was not solved in \cite{ChenLin1} and still remains open. 

Remark that system \eqref{eq0} (i.e., the same system in a punctured ball) was not studied in \cite{ChenLin1} and also remains open so far. Compared to \eqref{eq1-5}, the additional difficulty for \eqref{eq0} is that solutions are no longer necessarily radially symmetric, which makes the study of \eqref{eq0} more difficult. On the other hand, the system \eqref{eq05} studied in \cite{COS, GKS} are fully symmetric, so
$$\nabla (u_j\nabla u_i-u_i\nabla u_j)\equiv 0,\quad\forall \, i\neq j.$$
This property gives a new invariant besides the Pohozaev invariant and plays a crucial role in the study of \eqref{eq05}. Clearly such property does not hold for \eqref{eq0} unless $N=4$ with $\mu_1=\mu_2=\beta$, so the main ideas of  \cite{COS, GKS} can not work for \eqref{eq0}.

The purpose of this paper is to generalize some results of Theorem \ref{thm-A} to the system \eqref{eq0} and explore new phenomena by developing new ideas. In particular, we will see Theorems \ref{thm:N3-semi} and \ref{th120} below that novel phenomena occur for singular solutions of the system \eqref{eq0}.

Given a solution $(u,v)$ of \eqref{eq0}, there are three possibilities.
\begin{itemize}
\item Both $u$ and $v$ are bounded near $x=0$, namely the singularity $0$ is removable for both $u$ and $v$ and so $u, v\in C^2(B_2)$.
\item One of $u, v$ is bounded near $x=0$ and the other is unbounded near $x=0$, and we call this type of solutions to be semi-singular solutions.
\item Both $u$ and $v$ are unbounded near $x=0$, and we call this type of solutions to be both-singular solutions.
\end{itemize}

Multiplying the first equation of (\ref{eq0}) by $x\cdot\nabla u$, the second equation by $x\cdot\nabla v$, and integrating over $B_r\setminus B_s$, we easily obtain the following Pohozaev identity
\be\label{eq6}D(r; u, v)=D(s; u, v),\ee
where
{\allowdisplaybreaks
\begin{align}\label{eq7}
&D(r; u, v):=\int_{\partial B_r}\Bigg[\frac{N-2}{2}\left(u\frac{\partial u}{\partial \nu}+v\frac{\partial v}{\partial \nu}\right)-\frac{r}{2}(|\nabla u|^2+|\nabla v|^2)\nonumber\\
&\quad+r\left|\frac{\partial u}{\partial \nu}\right|^2+r\left|\frac{\partial v}{\partial \nu}\right|^2+\frac{r}{2^\ast}\left(\mu_1 u^{2^\ast}+\mu_2 v^{2^\ast}+2\bb u^{\frac{2^\ast}{2}}v^{\frac{2^\ast}{2}}\right)\Bigg]d\sigma.
\end{align}
}%
Hence $D(r; u, v)$ is a constant independent of $r$, and we denote this Pohozaev invariant by $D(u, v)$. Let $\bar u(|x|)$, $\bar v(|x|)$ be the spherical averages of $u$ and $v$ respectively, defined by \eqref{eq1-3}.
Our first result is as follows.

\begin{theorem}\label{th1}Let $N\geq 3$, $\mu_1, \mu_2, \bb>0$, and $(u, v)$ be a solution of \eqref{eq0}. Then
\begin{itemize}
\item[(1)] There is a constant $C_0>0$ such that
$$u(x), v(x)\leq C_0|x|^{-\frac{N-2}{2}},\quad\text{for $|x|>0$ small}.$$
\item[(2)]  
\be\label{eq00}u(x)=(1+O(|x|))\bar{u}(|x|),\; v(x)=(1+O(|x|))\bar{v}(|x|),\;\text{for $|x|>0$ small}.\ee
\item[(3)] $D(u, v)\le 0$, and $D(u, v)<0$ if and only if there is a constant $c_0>0$ such that
\begin{equation}\label{eq8}u(x)+v(x)\geq c_0|x|^{-\frac{N-2}{2}},\quad\text{for $|x|>0$ small}.\ee
\end{itemize}
\end{theorem}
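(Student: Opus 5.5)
The plan follows the scheme of Caffarelli, Gidas and Spruck for \eqref{eq1-2}, adapted to the coupled nonlinearity. All three parts rely on the fact that the combined profile $w:=u+v$ (or $\max\{u,v\}$) satisfies $-\Delta w\le C w^{2^\ast-1}$, and that each component solves a linear equation $-\Delta u=a(x)u$ with $a=\mu_1u^{2^\ast-2}+\bb u^{\frac{2^\ast}{2}-2}v^{\frac{2^\ast}{2}}$.

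\textbf{Part (1).} I would argue by contradiction with a blow-up (doubling lemma) argument. Suppose there are $x_k\to0$ with $(u+v)(x_k)|x_k|^{\frac{N-2}{2}}\to\infty$. Applying the Polá\v{c}ik--Quittner--Souplet doubling lemma to $M:=(u+v)^{\frac{2}{N-2}}$ on $B_{|x_k|/2}(x_k)$, I rescale so that $M$ equals $1$ at the origin and is at most $2$ on balls of radius $R_k\to\infty$. Elliptic estimates then give convergence to a nonnegative, nontrivial entire solution of \eqref{eq1-5} on all of $\RN$, with $(u+v)(0)=1$ and bounded. The Liouville-type classification of entire solutions (\cite{ChenLi, GL}) does not immediately contradict this, since bubbles exist. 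So instead I would use the CGS moving-spheres/Kelvin-transform argument: after a Kelvin transform, both components are positive superharmonic near infinity, and the system is cooperative because $\bb>0$. Moving spheres centred at points near $0$ for the system then yields the estimate directly, as in \cite{CGS} and Li \cite{Li1996}. Concretely, for fixed $x$ near $0$ I would compare $(u,v)$ with its Kelvin transform about spheres centred at $x$ of radius $\lambda<|x|$. The cooperative structure makes the maximum principle work for the pair, and this gives monotonicity of $\lambda\mapsto$ (radial behaviour). Combined with the Harnack inequality on annuli, this yields (1). The same moving-sphere comparison, applied with centres $z$ where $|z|\ll|x|$, gives (2) in the standard CGS way: $u(x)\le u(y)(1+C|x|)$ for $|y|=|x|$, and hence $u=\bar u(1+O(|x|))$.

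\textbf{Part (3).} I would pass to cylindrical variables $t=-\ln|x|$ with $v_1=|x|^{\frac{N-2}{2}}u$ and $v_2=|x|^{\frac{N-2}{2}}v$. By (1) and (2), $v_1$ and $v_2$ are bounded, and their derivatives are bounded by elliptic estimates. Up to $O(e^{-t})$ errors they are close to their angular averages, which satisfy \eqref{eq30-017} with $\dd=\frac{N-2}{2}$ up to exponentially small errors. In these variables, $D(u,v)$ is $|\mathbb S|$ times the Hamiltonian
\[
H=\tfrac12(v_1'^2+v_2'^2)-\tfrac{\dd^2}{2}(v_1^2+v_2^2)+\tfrac1{2^\ast}\bigl(\mu_1v_1^{2^\ast}+\mu_2v_2^{2^\ast}+2\bb v_1^{\frac{2^\ast}{2}}v_2^{\frac{2^\ast}{2}}\bigr)
\]
evaluated on the averages, plus $o(1)$ as $t\to\infty$. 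First, if \eqref{eq8} fails, then along a sequence $t_k\to\infty$ we have $v_1+v_2\to0$ at some point. By Harnack this holds on $[t_k-1,t_k+1]$, and the derivatives also tend to $0$ there by interior estimates, so $D=\lim H=0$. Conversely, if \eqref{eq8} holds, the translated sequences $v_i(\cdot+t_k)$ converge to a bounded positive solution of \eqref{eq30-017} on $\mathbb R$ with both components bounded below in sum. Such a solution has $H<0$: bounded solutions with $H\ge0$ must tend to $0$ at $\pm\infty$ (homoclinic orbits, corresponding to regular or bubble solutions) or be the zero solution, contradicting the lower bound. I would prove this via the phase-plane observation that near the origin the potential is dominated by $-\frac{\dd^2}{2}|v|^2$.

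Finally, $D\le0$ follows because limits of translates always exist. For a bounded solution on $\mathbb R$ with $\liminf|v|$ not attained, $H>0$ forces $|v'|$ to stay bounded below when $|v|$ is small, which together with convexity ($v_i''\ge \dd^2v_i-Cv_i^{2^\ast-1}>0$ when small) forces unbounded growth; this is a contradiction.

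The main obstacle is the moving-sphere step for the system in (1)--(2): the maximum principle must be run for the pair simultaneously, and the region where both comparisons fail has to be handled using the positivity of $\bb$.
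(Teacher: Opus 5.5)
Your plan for (3) matches the paper's structure. Translates of the averaged Fowler variables converge to a bounded nonnegative solution of \eqref{eq3-9} on $\mathbb R$, and $D(u,v)=|\mathbb S|K$, where $K$ is its Hamiltonian. Your proof that $D<0$ implies \eqref{eq8}, by letting $v_1+v_2$ vanish along a sequence, is fine.

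\textbf{Gap in (3).} Everything else rests on an ODE fact you do not prove: every nonnegative bounded solution of \eqref{eq3-9} on $\mathbb R$ has $K\le 0$, with $K<0$ whenever $v_1+v_2\ge c>0$. Your observation near the origin proves only that an orbit with $K>0$ cannot come close to $0$. There $|v'|^2\ge K$ while $|v''|=O(|v|)$, so some component would cross zero within time $O(|v|)$. It says nothing about bounded orbits staying in $\{|v|\ge c\}$. That is exactly the situation under \eqref{eq8}, and also the only case your argument for $D\le 0$ leaves open. For the scalar equation the phase plane excludes such orbits. But \eqref{eq3-9} is a Hamiltonian system with two degrees of freedom, and no local analysis at the origin rules out bounded orbits with $K\ge 0$ in the open quadrant. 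This is precisely the content of \cite[Theorem 1.1]{ChenLin1}, which the paper invokes here. Its proof needs a global ODE analysis, built on the fact that $f_i=-\frac12|v_i'|^2+\frac{\dd^2}{2}v_i^2-\frac{\mu_i}{2^\ast}v_i^{2^\ast}$ is monotone in the same direction as $v_i$. Separately, the convexity $v_i''\ge\dd^2v_i-Cv_i^{2^\ast-1}$ is false componentwise. The coupling term $\bb v_i^{\frac{2^\ast}{2}-1}v_j^{\frac{2^\ast}{2}}$ is not $O(v_i^{2^\ast-1})$ when $v_i\ll v_j$; the inequality holds only for $W=v_1+v_2$.

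\textbf{Gap in (1) and (2).} Part (1) as written is circular. A Harnack inequality on the annuli $\{r/2<|x|<2r\}$ with constants independent of $r$ requires $(u+v)^{2^\ast-2}=O(|x|^{-2})$, which is the estimate being proved. The moving-sphere comparison you describe does not by itself produce an upper bound.

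The paper uses the opposite order. It proves (2) first, with no upper bound, by the measure-theoretic moving-plane method applied to the shifted Kelvin transforms $U_\e,V_\e$. These are needed because $U,V$ are not known to decay at infinity, so the reflection cannot be started directly. Your sketch of (2) leaves this unaddressed: cooperativity ($\bb>0$) only gives the maximum principle for the pair. The paper then gets (1) for $\bar u,\bar v$ from $\int_{B_r}(r^2-|x|^2)f(u,v)\,dx<2N\int_{B_r}u\,dx$, H\"older's inequality and the monotonicity of $\bar u$, and transfers it pointwise using (2).

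Once (2) is available, your abandoned blow-up actually closes. The functions $u,v$ are almost constant on the spheres $|x|=|y_k|$, whose rescaled radii tend to infinity. So the blow-up limit $(\phi_1,\phi_2)$ is constant on parallel hyperplanes. Each $\phi_i$ is then a bounded concave function of one variable ($-\phi_i''\ge 0$), hence constant, hence zero by the equation. This contradicts $(\phi_1+\phi_2)(0)=1$.
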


Our next results are concerned with singular solutions. A basic question is {\it whether semi-singular solutions exist or not}. It turns out the answer depends on the dimensions.

\begin{theorem}[Non-existence of semi-singular solutions for $N\geq 4$]\label{th2}Let  $N\ge 4$ and $\mu_1, \mu_2, \bb>0$. Then \eqref{eq0} has no semi-singular solutions.
\end{theorem}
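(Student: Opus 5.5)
Suppose, for contradiction, that $(u,v)$ is a semi-singular solution of \eqref{eq0}. By the symmetry of the system we may assume that $v$ is bounded near $0$ and $u$ is unbounded near $0$. The plan is to show first that $u$ blows up at the full rate $|x|^{-\frac{N-2}{2}}$. The coupling term $\beta v^{\frac{2^\ast}{2}-1}u^{\frac{2^\ast}{2}}$ then acts as a source in the $v$-equation that is too singular to allow $v$ to stay bounded when $N\ge4$.

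\emph{Step 1: $D(u,v)<0$.} We show that $D(u,v)=0$ forces both singularities to be removable, which contradicts the unboundedness of $u$. Assume $D(u,v)=0$. By Theorem \ref{th1}(3) the lower bound \eqref{eq8} fails. Using the upper bound of Theorem \ref{th1}(1), the Harnack-type comparison \eqref{eq00} and the rescalings $w_\lambda(y)=\lambda^{\frac{N-2}{2}}(u,v)(\lambda y)$, we aim to show that $|x|^{\frac{N-2}{2}}(u+v)(x)\to0$ as $x\to0$. The argument is by contradiction: a nonzero limit of the rescalings along a sequence would be a solution of \eqref{eq1-5} with zero Pohozaev invariant that is bounded by $C_0|y|^{-\frac{N-2}{2}}$ and singular, which is incompatible with $D=0$. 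Once $|x|^{\frac{N-2}{2}}(u+v)=o(1)$, the potentials $\mu_1u^{2^\ast-2}+\beta u^{\frac{2^\ast}{2}-2}v^{\frac{2^\ast}{2}}$ and the analogous one for $v$ are $o(|x|^{-2})$. The standard comparison argument for $-\Delta w=V w$ with $V=o(|x|^{-2})$, as in \cite{CGS}, then gives $u,v\le C|x|^{-\e}$ for every $\e>0$. This places them in $L^p$ for large $p$, and elliptic regularity shows the singularity is removable. I expect Step 1 to be the main obstacle, specifically the blow-up/compactness argument excluding a nontrivial singular limit. (This is essentially the "sharp removability criterion'' announced in the abstract, so it may also be available from the proof of Theorem \ref{th1}.) Once $D(u,v)<0$, Theorem \ref{th1}(3) together with the boundedness of $v$ gives
\[
u(x)\ge \tfrac{c_0}{2}|x|^{-\frac{N-2}{2}}\quad\text{for $|x|>0$ small}.
\]

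\emph{Step 2: a lower bound for $-\Delta v$.} Since $-\Delta v\ge0$ in $B_2\setminus\{0\}$ and $v$ is bounded, the singularity is removable for the superharmonic function $v$ (a bounded superharmonic function carries no point mass at $0$). In particular $v\ge m>0$ on $B_1$ by the minimum principle. Since $\frac{2^\ast}{2}=\frac{N}{N-2}$, we get
\[
-\Delta v\ \ge\ \beta\, m^{\frac{2}{N-2}}\Big(\tfrac{c_0}{2}\Big)^{\frac{N}{N-2}}|x|^{-\frac N2}=:c\,|x|^{-\frac N2}\quad\text{near }0.
\]

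\emph{Step 3: ODE for the spherical average.} Averaging over spheres gives $-(r^{N-1}\bar v'(r))'\ge c\,r^{\frac N2-1}$. Because $v$ is bounded and superharmonic with no Dirac mass at $0$, we have $r^{N-1}\bar v'(r)\to0$ as $r\to0$; more precisely, $-r^{N-1}\bar v'(r)=|\mathbb S|^{-1}\int_{B_r}(-\Delta v)\ge0$ in the distributional sense. Integrating from $0$ to $r$ gives $-\bar v'(r)\ge \frac{2c}{N}\,r^{1-\frac N2}$. Integrating once more from $r$ to $r_0$ gives
\[
\bar v(r)\ \ge\ \bar v(r_0)+\frac{2c}{N}\int_r^{r_0}s^{1-\frac N2}\,ds .
\]
For $N=4$ the integral equals $\log(r_0/r)$, and for $N\ge5$ it is of order $r^{2-\frac N2}$. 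In both cases it tends to $+\infty$ as $r\to0$, so $\bar v$ is unbounded. This contradicts the boundedness of $v$ and proves the theorem. For $N=3$, the exponent $1-\frac N2=-\frac12$ is integrable at $0$, which explains why the argument breaks down in dimension three, consistent with the existence result there.
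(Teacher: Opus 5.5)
Your proposal has a genuine gap in Step 1, which is where you expected the difficulty. Steps 2--3 are sound, and they can be adapted so that Step 1 is not needed.

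\textbf{Why Step 1 fails.} A nonzero blow-up limit of $\lambda^{\frac{N-2}{2}}(u,v)(\lambda y)$ need not be singular. Since $v$ is bounded, the rescaled $v$-component tends to $0$. The limit is therefore $(U,0)$, where $U$ solves the scalar problem on $\mathbb R^N\setminus\{0\}$ with zero Pohozaev invariant, and by \cite{CGS} $U$ can be a standard bubble. A bubble is regular at $0$, satisfies $U\le C|y|^{-\frac{N-2}{2}}$, and has $D=0$, so no incompatibility arises. Your compactness argument therefore cannot exclude a tower of bubbles at scales $\lambda_k\to0$. That is exactly the scenario in which $D=0$ while $|x|^{\frac{N-2}{2}}(u+v)\not\to0$.

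The general implication ``$D=0\Rightarrow$ removable'' is Theorem \ref{th3}, and it is not contained in Theorem \ref{th1}. The paper proves it only for $N\ge5$, and for $N=4$ under restrictions on $\beta$, via the valley analysis of Section 5.

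For semi-singular solutions the paper uses Lemma \ref{lemma2-8} instead, which exploits the bounded component:
\begin{itemize}
\item the bounded component's weighted average is $O(e^{-\delta t})$;
\item the normalized valley profile then forces $W(t_n)\le Ce^{-\delta t_n}$ at the valleys of $W$;
\item the width estimate $t_n-t_n^*\ge\frac1\delta\ln\frac{\varepsilon_0}{W(t_n)}$ then keeps $t_n^*$ bounded, which is a contradiction.
\end{itemize}

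\textbf{What works, and how to repair the proof.} When $D<0$, your Steps 2--3 are correct. They are a simpler, dimension-uniform substitute for Lemmas \ref{lemma2-9}--\ref{lemma2-10}. They also make Step 1 unnecessary, because they only need $u\ge c|x|^{-\frac{N-2}{2}}$ on infinitely many disjoint annuli $A_k=\{e^{-t_k-\ell}\le|x|\le e^{-t_k+\ell}\}$. That weaker bound holds for the following reasons.
\begin{itemize}
\item Since $u$ is unbounded, $\limsup_{t\to\infty}\bar\omega_1(t)>0$. Otherwise $W\to0$, and Lemma \ref{lemma3-1} (or the comparison argument at the end of your Step 1) would make $u$ bounded.
\item The bound $|\bar\omega_1'|\le C$ from \eqref{eq3-3-2}, combined with \eqref{eq00}, then gives the lower bound on such annuli, with $c,\ell$ independent of $k$.
\end{itemize}
By \eqref{eq2-55} and Tonelli, boundedness of $\bar v$ forces $\int_{B_{r_0}}|x|^{2-N}(-\Delta v)\,dx<\infty$. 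On the other hand, $v\ge m>0$ by \eqref{eq2-05}, so $-\Delta v\ge c|x|^{-\frac N2}$ on each $A_k$. Each $A_k$ therefore contributes at least
\[
c\int_{e^{-t_k-\ell}}^{e^{-t_k+\ell}}\rho^{1-\frac N2}\,d\rho\;\ge\;2c\ell \qquad\text{for } N\ge4 .
\]
Summing over infinitely many disjoint annuli gives a contradiction that covers $D=0$ and $D<0$ at once. For $N=3$ these contributions are summable, consistent with Theorem \ref{thm:N3-semi}.
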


\begin{theorem}[Existence of semi-singular solutions for $N=3$]\label{thm:N3-semi}
Let $N=3$ and $\mu_1,\mu_2,\beta>0$.
Then \eqref{eq0} admits a positive radially symmetric semi-singular solution $(u,v)$ satisfying
\begin{equation}\label{eq:asymptotics-main}
\lim_{|x|\to0}u(x)=A>0,
\qquad
\lim_{|x|\to0}|x|^{1/2}v(x)=(4\mu_2)^{-1/4}.
\end{equation}
Moreover, the Pohozaev invariant $
D(u,v)=-\frac{\pi}{6\sqrt{\mu_2}}<0.
$

Similarly, there also exists a positive radial semi-singular solution satisfying
\[
\lim_{|x|\to0}v(x)=B>0,
\qquad
\lim_{|x|\to0}|x|^{1/2}u(x)=(4\mu_1)^{-1/4}.
\]
\end{theorem}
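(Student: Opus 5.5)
The plan is to construct the solution directly as a radial solution near the origin by a perturbative fixed-point argument, and then to use the scaling invariance of \eqref{eq0} to stretch the domain of existence to $B_2\setminus\{0\}$. For $N=3$ we have $2^\ast=6$, so for radial functions the system reads
\[
-\Delta u=\mu_1u^5+\bb u^2v^3,\qquad -\Delta v=\mu_2v^5+\bb v^2u^3 .
\]
It is invariant under $(u,v)\mapsto(\lambda^{1/2}u(\lambda x),\lambda^{1/2}v(\lambda x))$. This scaling preserves both $\lim_{r\to0}r^{1/2}v$ and the positivity of $\lim_{r\to 0}u$. Hence it suffices to produce a positive radial solution on some small punctured ball $B_\e\setminus\{0\}$ with the asymptotics \eqref{eq:asymptotics-main}, and then rescale with $\lambda=\e/2$.

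The guiding model is the scalar singular solution $v_0=w_0r^{-1/2}$ with $w_0=(4\mu_2)^{-1/4}$, which solves $-\Delta v=\mu_2v^5$ in $\R^3\setminus\{0\}$. Since $v_0^3\sim r^{-3/2}$ is integrable against $r^2\,dr$, the coupling term $\bb u^2v^3$ should leave $u$ bounded. Set $t=-\ln r$ and $v=r^{-1/2}w(t)$. Then
\[
w''-\tfrac14w+\mu_2w^5+\bb e^{-3t/2}u^3w^2=0.
\]
I look for $w=w_0(1+\varphi)$ with $\varphi\to0$ as $t\to\infty$. The linearization at $w_0$ is $\varphi''+\varphi$, because $5\mu_2w_0^4-\tfrac14=1$. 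It is oscillatory, so I invert it from $+\infty$:
\[
\varphi(t)=-\int_t^\infty\sin(t-s)\,F(s)\,ds,
\]
where $F$ collects the forcing $O(e^{-3s/2})$ coming from the $u$-term and the quadratic remainders in $\varphi$. For the $u$-component I prescribe $u(0)=A$ and write
\[
u(r)=A-\int_0^r s^{-2}\int_0^s\tau^2\bigl(\mu_1u^5+\bb u^2v^3\bigr)\,d\tau\,ds .
\]
I set up a contraction on the product of two spaces. The first is $\{u:\ \|u-A\|_{L^\infty(0,\e)}\le A/2\}$. The second is $\{\varphi:\ \sup_{t\ge T}e^{3t/2}(|\varphi|+|\varphi'|)\le M\}$ with $T=-\ln\e$ large. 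The integrals then gain factors like $\e^{3/2}$ and $\e^{1/2}$ (for instance, $\int_0^r s^{-2}\int_0^s\tau^{1/2}\,d\tau\,ds\sim r^{3/2}$), which makes the map a contraction for $\e$ small. The fixed point gives a positive solution on $(0,\e)$ with $u\to A$, $r^{1/2}v\to w_0$, and moreover $w'\to0$ and $ru'\to0$.

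The main obstacle is closing the $\varphi$-estimate. The kernel $\sin(t-s)$ does not decay, so the weight must come entirely from the forcing. This requires the quadratic terms in $\varphi$ to be genuinely smaller, namely $O(e^{-3t})$ in the weighted norm, and $T$ to be chosen large enough that the constants close. I would also check carefully that the $\bb u^2 v^3$ contribution in the $u$-equation has a singularity mild enough for $u'$ to stay controlled ($|u'|\lesssim r^{1/2}$).

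For the Pohozaev invariant, I evaluate $D(r;u,v)$ as $r\to0$, using that it is independent of $r$. Since $u$ is bounded and $|u'|=O(r^{1/2})$, every term involving $u$ tends to zero after multiplication by $|\partial B_r|=4\pi r^2$; for example $r^3u^3v^3=O(r^{3/2})$. In the $v$-terms one may replace $v$ by $v_0$ up to $o(r^{-2})$. For $v_0$ the integrand equals
\[
w_0^2r^{-2}\Bigl(-\tfrac14+\tfrac18+\tfrac1{24}\Bigr)=-\tfrac{w_0^2}{12}r^{-2},
\]
where the last term uses $\mu_2w_0^4=\tfrac14$. Multiplying by $4\pi r^2$ gives
\[
D(u,v)=-\frac{\pi w_0^2}{3}=-\frac{\pi}{6\sqrt{\mu_2}},
\]
and this value is unchanged by the rescaling. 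The second family, in which $v$ stays bounded and $u$ is singular, follows by exchanging the roles of $(u,\mu_1)$ and $(v,\mu_2)$, since the system is symmetric under this swap.
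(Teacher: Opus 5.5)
Your construction is correct, but it follows a different route from the paper.

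\textbf{The paper's route.} The paper stays entirely in Emden--Fowler variables. The point $(0,0,a_*,0)$, with $a_*=w_0=(4\mu_2)^{-1/4}$, is an equilibrium of the autonomous first-order system, and its linearization has eigenvalues $\pm\frac12,\pm i$. The paper takes the one-dimensional stable manifold tangent to $(1,-\frac12,0,0)$ and writes it as a graph over $y_1$. This gives $y_1'=-\frac12y_1+O(y_1^2)$, hence $e^{t/2}y_1\to A_0\in(0,\infty)$. Positivity of $y_1$ follows because the local manifold meets $\{y_1=0\}$ only at the equilibrium. A shift in $t$, which is exactly your rescaling, reaches $|x|\le 2$. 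Finally $D(u,v)=|\mathbb S^2|K$ is read off from the conserved energy evaluated at the equilibrium.

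\textbf{Your route.} You keep the bounded component in the variable $r$ with $u(0)=A$ prescribed, so boundedness and positivity of $u$ are built in. Only the singular component goes into Fowler variables. The neutral eigenvalues $\pm i$ are what force the paper to use the stable manifold theorem in its non-hyperbolic form. You absorb them instead by inverting $\varphi''+\varphi$ from $+\infty$ against the $e^{-3t/2}$ decay of the coupling term. In effect you carry out Perron's proof of the stable manifold theorem by hand for this system.

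\textbf{Comparison.} Your approach is self-contained and gives sharper asymptotics, namely $u=A+O(r^{1/2})$ and $r^{1/2}v=w_0+O(r^{3/2})$. The paper's argument is much shorter once the ODE theorem is granted. Your Pohozaev computation is the same as the paper's evaluation of $K$, done as $r\to0$ instead of $t\to+\infty$.

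\textbf{Two slips to correct, neither fatal.}

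First, $\int_0^r s^{-2}\int_0^s\tau^{1/2}\,d\tau\,ds=\frac43 r^{1/2}$, not $\sim r^{3/2}$. In fact $u'(r)\approx-\frac23\beta A^2w_0^3r^{-1/2}$, so $u'$ is unbounded and the bound $|u'|\lesssim r^{1/2}$ you planned to check is false. This does no harm:
\begin{itemize}
\item the $u$-map is still an $O(\varepsilon^{1/2})$ perturbation of $A$;
\item after integrating over $\partial B_r$, the $u$-terms of $D(r;u,v)$ are $O(r^2|uu'|)=O(r^{3/2})$, $O(r^3|u'|^2)=O(r^2)$ and $O(r^3u^3v^3)=O(r^{3/2})$, all of which still vanish as $r\to0$.
\end{itemize}

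Second, with the norms you chose, the Lipschitz constant of the $\varphi$-map with respect to $u$ is $O(1)$, independent of $\varepsilon$. Only the reverse coupling, from $\varphi$ to $u$, is small, of order $O(\varepsilon^2)$. So ``contraction for $\varepsilon$ small'' does not hold literally in the plain sum norm. Either of the following routine fixes works: run the contraction in a suitably weighted product norm, or use the weight $e^{\gamma t}$ with $0<\gamma<\frac32$ in the $\varphi$-norm, which makes the $u$-to-$\varphi$ constant $O(e^{-(3/2-\gamma)T})$.
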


Concerning both-singular solutions, we have the following sharp pointwise estimates.

\begin{theorem}\label{th20}Let  $N\ge 4$, $\mu_1, \mu_2, \bb>0$, and $(u, v)$ be a solution of \eqref{eq0} with $D(u, v)<0$. Then $(u,v)$ is both-singular, and there exist constants ${C}_2>{C}_1>0$ and $\alpha \in (0,1]$ such that

\begin{itemize}
\item[$(1)$]
For $N\geq 5$, 
\begin{equation}\label{asss}{C}_1|x|^{\frac{2-N}{2}}\le u(x), v(x)\le {C}_2|x|^{\frac{2-N}{2}},\quad\text{for $|x|>0$ small}.\end{equation}
\item[$(2)$]
For $N=4$, 
\begin{equation}\label{eq-alpha0}
        C_1|x|^{-\alpha}\le u(x),v(x)\le C_2|x|^{-1},\quad\text{for $|x|>0$ small}.
\end{equation}
Moreover, we can take $\alpha=1$ in \eqref{eq-alpha0} if $\beta>\max\{\mu_1,\mu_2\}$ or $\beta=\mu_1=\mu_2$.
\end{itemize}\end{theorem}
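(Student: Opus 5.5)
Both-singularity is immediate from what precedes the statement. By Theorem \ref{th1}(3), $D(u,v)<0$ gives $u+v\ge c_0|x|^{-\frac{N-2}{2}}$ near $0$, so at least one component is unbounded. By Theorem \ref{th2}, since $N\ge4$ there are no semi-singular solutions, so both components are unbounded. The upper bounds in \eqref{asss} and \eqref{eq-alpha0} are Theorem \ref{th1}(1). The real content is therefore the lower bound for each component separately. By Theorem \ref{th1}(2) it suffices to bound the spherical averages $\bar u,\bar v$ from below.

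I will work with the Emden–Fowler variables $t=-\ln|x|$, $w_1(t)=|x|^{\frac{N-2}{2}}\bar u$, $w_2(t)=|x|^{\frac{N-2}{2}}\bar v$, with $\delta=\frac{N-2}{2}$. Averaging the equations and using \eqref{eq00}, the pair $(w_1,w_2)$ satisfies \eqref{eq30-017} up to errors $O(e^{-t})$, and it is bounded. Assume by contradiction that $\liminf_{t\to\infty} w_1(t)=0$.

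I would first try a blow-up/compactness argument. Take $t_k\to\infty$ at which $w_1(t_k)\to0$, translate to $t_k$, and pass to the limit. This gives a bounded entire solution $(W_1,W_2)$ of \eqref{eq30-017} on $\mathbb R$ with $W_1\ge0$ and $W_1(0)=0$. By the strong maximum principle applied to the first equation, $W_1\equiv0$. Then $W_2$ solves the scalar ODE \eqref{eq-ode1} (with coefficient $\mu_2$), and its Pohozaev energy equals $D(u,v)<0$ up to a normalizing constant. Hence $W_2$ is a positive periodic (or constant) solution, bounded below by some $m>0$.

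Next I go back to the linear equation for $w_1$ near such times:
\[
w_1''-\delta^2 w_1+\bigl(\mu_1w_1^{2^*-2}+\beta w_1^{\frac{2^*}{2}-1}w_2^{\frac{2^*}{2}}\bigr)\approx 0 .
\]
While $w_1$ is small, the term $\beta w_1^{\frac{2^*}{2}-1}w_2^{\frac{2^*}{2}}$ carries the exponent $\frac{2^*}{2}-1=\frac{2}{N-2}$.
\begin{itemize}
\item For $N\ge5$ this exponent is $<1$. The coupling term is then superlinear-dominant: it is much larger than $\delta^2w_1$ when $w_1$ is small and $w_2\ge m$. So $w_1''<0$ wherever $w_1$ is small, and $w_1$ cannot approach $0$ along a sequence while staying positive. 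Making this quantitative should give $w_1\ge c>0$ for large $t$, contradicting the assumption. Rigorously, I would use a comparison function: on intervals where $w_1<\epsilon$, $w_1$ is concave, and a bounded positive function on a half-line that is concave where it is small cannot have $\liminf=0$ unless it eventually stays small, which contradicts concavity combined with positivity.
\item For $N=4$ the exponent equals $1$, and the equation becomes linear, $w_1''=(\delta^2-\beta w_2^2-\dots)w_1$, where $\delta=1$. Concavity is no longer automatic. Instead I compare with the potential $\beta W_2^2-1$ averaged over a period: $w_1$ may decay, but at most at an exponential rate $e^{-\gamma t}$ with $\gamma<1$. Indeed, if $w_1$ decays, then in the original variables $u\ge C|x|^{-1+\gamma}$, and $\gamma<1$ follows from $\bar u$ being superharmonic and positive, which already gives $\bar u\ge c>0$. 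This yields $\alpha=1-\gamma\in(0,1]$.
\end{itemize}

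For $\alpha=1$ in the special cases I would use separate arguments. If $\beta=\mu_1=\mu_2$, then $u^2+v^2$ satisfies the scalar equation and the structure is explicit. If $\beta>\max\{\mu_1,\mu_2\}$, I would show that the potential satisfies $\beta W_2^2\ge 1$ on average. To see this, compare with the scalar Pohozaev quantity: since $\mu_2W_2^2$ averages to $\delta^2=1$ over a period (from integrating the ODE divided by $W_2$), $\beta>\mu_2$ gives $\int(\beta W_2^2-1)>0$. A Floquet/Sturm argument then shows that the positive solution $w_1$ cannot decay.

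The main obstacle is the $N=4$ linear case. There the limiting $W_2$ need not be periodic in the genuine system unless $W_1\equiv0$, and the errors must be controlled uniformly over long time intervals rather than only near the translated points.
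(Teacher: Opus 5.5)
Your $N\ge5$ argument is essentially the paper's (Lemma \ref{lemma2-9}). The exponent $\frac{2^*}{2}-1<1$ makes $w_1$ concave wherever it is small, as long as $w_2$ is bounded below there. That lower bound comes directly from $\liminf_{t\to\infty}(w_1+w_2)>0$ in Lemma \ref{lemma03-3}, so the periodic blow-up limit is an unnecessary detour. Citing Theorem \ref{th2} for both-singularity is circular: in the paper, Theorem \ref{th2} for $D<0$ is deduced from the very lemmas that prove Theorem \ref{th20}. This is harmless only because your componentwise lower bounds imply both-singularity anyway.

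\textbf{Gap 1: the bound $\alpha>0$ for $N=4$.} Your justification is superharmonicity of $\bar u$, which gives only $w_1'+w_1>0$, i.e.\ $w_1\ge ce^{-t}$. That is $\gamma\le1$, i.e.\ $\alpha\ge0$. The strict inequality $\gamma<1$, which is the entire content, is asserted but not proved. Averaging against a limiting periodic $W_2$ also cannot be transferred to $w_1$ on a half-line, because $w_2$ is close to $W_2$ only on compact windows around the $t_k$. The missing idea is a \emph{pointwise} Hardy-type potential, not an averaged one:
\begin{itemize}
\item Since $w_1+w_2\ge C_1$ for all large $t$, you get $\mu_1w_1^2+\beta w_2^2\ge c_0>0$ (shrink $c_0<1$), hence $w_1''\le(1-c_0)w_1$ for all large $t$.
\item Then $z=w_1'/w_1$ satisfies $z'\le1-c_0-z^2$. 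Positivity of $w_1$ forbids $z<-\sqrt{1-c_0}$, since that would make $z$ blow down in finite time, i.e.\ $w_1$ would vanish.
\item Hence $w_1\ge Ce^{-\sqrt{1-c_0}\,t}$, i.e.\ $\alpha=1-\sqrt{1-c_0}>0$.
\end{itemize}
The paper does the same thing in the radial variable: $-(r^3\bar u')'\ge Cr\bar u$ follows from $\bar u+\bar v\ge Cr^{-1}$, and is integrated from $0$ using $r^3\bar u'\to0$ and the monotonicity of $\bar u$.

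\textbf{Gap 2: $\alpha=1$ when $\beta>\max\{\mu_1,\mu_2\}$.} Your key identity is false. Dividing $W_2''=(1-\mu_2W_2^2)W_2$ by $W_2$ and integrating over a period gives $\int(1-\mu_2W_2^2)=\int(W_2'/W_2)^2\ge0$. So $\mu_2\langle W_2^2\rangle<1$ unless $W_2$ is constant. Moreover $\langle W_2^2\rangle\to0$ along orbits approaching the homoclinic one, so $\int(\beta W_2^2-1)>0$ fails for such orbits whatever $\beta$ is. What works is Sturm--Picone comparison with $w_2$ itself, which solves the linearized $w_1$-equation with $\mu_2$ in place of $\beta$. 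For $q=w_1/w_2$ one has $(w_2^2q')'=-w_2^4q\,[(\beta-\mu_2)+O(q^2)+o(1)]$, which is negative where $q$ is small. The paper runs this on the actual solution (Lemma \ref{lemma2-10}):
\begin{itemize}
\item $\liminf q=0$ forces an unbounded component of $\{q<\eta\}$, on which $w_2^2q'$ decreases.
\item So $q$ must be nondecreasing there, since otherwise it turns negative in finite time; this contradicts $\liminf q=0$.
\end{itemize}
This needs neither periodicity nor long-time error control, which removes exactly the obstacle you flagged.

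\textbf{Gap 3: the case $\beta=\mu_1=\mu_2$.} It is not true that $u^2+v^2$ (or $\sqrt{u^2+v^2}$) solves the scalar equation. A nonnegative gradient (Kato) term remains, and it vanishes only when $u,v$ are proportional. The paper simply cites \cite[Theorem 1.3]{COS}. A self-contained route is the conserved angular momentum $\int_{\mathbb S}(\omega_1\partial_t\omega_2-\omega_2\partial_t\omega_1)\,d\theta$ used in Section \ref{sec-5}.
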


Remark that for the special case $N=4$ with $\beta=\mu_1=\mu_2$, system \eqref{eq0} is essentially of the form \eqref{eq05}, and $\alpha=1$ for this special case is already covered by \cite[Theorem 1.3]{COS}. Theorem \ref{th20} is new for all other cases.
In view of \eqref{eq8}, it is natural to ask whether one can also take $\alpha=1$ in \eqref{eq-alpha0} for $0<\beta\leq\max\{\mu_1,\mu_2\}$. Surprisingly, our next result shows that this can not hold for some both-singular solutions when $0<\beta<\max\{\mu_1,\mu_2\}$.

\begin{theorem}\label{th120}
Let  $N=4$ and $0<\beta<\max\{\mu_1,\mu_2\}$, say $\beta<\mu_1$ for example. Then \eqref{eq0} has a radially symmetric solution $(u,v)$ with $D(u,v)<0$ such that
$$\;\lim_{x\to 0} |x|u(x)=\frac{1}{\sqrt{\mu_1}}\quad\text{and }\;\;\lim_{x\to 0} |x|v(x)=0,$$
namely we can not take $\alpha=1$ in \eqref{eq-alpha0} for $v(x)$.
\end{theorem}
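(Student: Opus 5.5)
We construct a radial solution through the Emden--Fowler change of variables. For $N=4$ we have $2^\ast=4$ and $\frac{2^\ast}{2}=2$, so the couplings are $\beta u v^2$ and $\beta v u^2$. Set $t=-\ln|x|$, $u(x)=e^{t}v_1(t)$ and $v(x)=e^{t}v_2(t)$, so that $|x|u=v_1$. Then \eqref{eq0} becomes the autonomous system \eqref{eq30-017} with $\delta=\frac{N-2}{2}=1$:
$$v_1''-v_1+\mu_1v_1^3+\beta v_1v_2^2=0,\qquad v_2''-v_2+\mu_2v_2^3+\beta v_2v_1^2=0,$$
posed for $t\in[t_0,\infty)$, where $t\to\infty$ corresponds to $x\to 0$. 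We need a solution with $v_1(t)\to 1/\sqrt{\mu_1}$, $v_2(t)\to 0^+$ and $v_1,v_2>0$. In the limit $t\to\infty$ we also need $D(u,v)<0$; $D$ is a constant multiple of minus the conserved energy
$$E=\tfrac12(v_1'^2+v_2'^2)-\tfrac12(v_1^2+v_2^2)+\tfrac{\mu_1}{4}v_1^4+\tfrac{\mu_2}{4}v_2^4+\tfrac{\beta}{2}v_1^2v_2^2 .$$

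\textbf{Step 1: hyperbolicity of the equilibrium.} The point $P=(1/\sqrt{\mu_1},0,0,0)$ is an equilibrium of the first-order system. In the $v_1$-direction the linearization is $w''-w+3w=0$, i.e. $w''+2w=0$, so the eigenvalues are purely imaginary. In the $v_2$-direction it is $w''-(1-\beta/\mu_1)w=0$. Because $\beta<\mu_1$, the coefficient $\kappa^2:=1-\beta/\mu_1$ is positive, and this gives a saddle pair $\pm\kappa$.

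So $P$ has a one-dimensional stable direction (in the $v_2$-component, decaying like $e^{-\kappa t}$), a one-dimensional unstable direction, and a two-dimensional centre manifold. The centre manifold is exactly $\{v_2=v_2'=0\}$, which is invariant; restricted to it we get the scalar Fowler ODE \eqref{eq-ode1}.

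Approaching $P$ exactly along the stable manifold would give $v_1\equiv 1/\sqrt{\mu_1}$ only if $v_1$ starts at the centre, and that is not automatic. I therefore use the center-stable manifold $W^{cs}(P)$, which is three-dimensional. Near $P$ it can be chosen invariant and foliated by strong stable fibers over the centre manifold. Centre dynamics near $P$ are periodic orbits of \eqref{eq-ode1} around the constant solution.

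To get $v_1\to 1/\sqrt{\mu_1}$ exactly, I instead take the one-dimensional strong stable fiber through $P$ itself. This is the stable manifold $W^s(P)$: the centre directions are Lyapunov stable on the invariant subspace, and the stable fiber over the fixed point is well defined, e.g. by a Lyapunov--Perron fixed-point argument in a weighted space $e^{-\kappa' t}$ with $0<\kappa'<\kappa$. The non-resonance needed is $\kappa\neq$ any integer combination involving the imaginary eigenvalues $\pm\sqrt2 i$. For a fixed point with purely imaginary centre eigenvalues, the stable manifold theorem for the hyperbolic part holds directly, since the Lyapunov--Perron construction only requires a spectral gap between $\mathrm{Re}=-\kappa$ and $\mathrm{Re}=0$.

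This yields a trajectory $(v_1,v_2)(t)$ defined for $t\geq T$ with
$$v_1-1/\sqrt{\mu_1}=O(e^{-2\kappa t}),\qquad v_2=ce^{-\kappa t}(1+o(1)),\qquad c>0 .$$
The branch with $c>0$ is chosen; $c\neq 0$ holds because $c=0$ would give the constant solution. In particular $v_1,v_2>0$ for large $t$.

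\textbf{Step 2: extend the solution to a punctured ball and compute the invariant.} Setting $t_0=T$ gives a solution on $B_{e^{-T}}\setminus\{0\}$, and rescaling via $u_\lambda(x)=\lambda u(\lambda x)$ (which preserves \eqref{eq0} for $N=4$) transfers it to $B_2\setminus\{0\}$. Radial symmetry is built in, and the $C^2$ regularity is standard.

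Then $|x|u=v_1\to 1/\sqrt{\mu_1}$ and $|x|v=v_2\to0$. Evaluating $E$ at $P$ gives $E=-\frac{1}{2\mu_1}+\frac{1}{4\mu_1}=-\frac{1}{4\mu_1}<0$, so $D(u,v)=-c_NE\neq0$. This must be negative by Theorem \ref{th1}(3), and it is consistent with \eqref{eq8}. I will check the constant $c_N>0$ by evaluating \eqref{eq7} in the $t$-variables: the terms become $|\mathbb S|$ times the energy with the appropriate sign.

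\textbf{Main obstacle.} The main step is the invariant-manifold argument at a non-hyperbolic equilibrium, specifically producing a genuinely positive $v_2$-branch converging to $P$ rather than to a nearby periodic centre orbit. I would do this concretely by solving the integral equation
$$v_2(t)=ce^{-\kappa t}+\int_t^\infty \frac{\sinh(\kappa(s-t))}{\kappa}\,N_2(s)\,ds,$$
coupled with a variation-of-constants formula for $w=v_1-1/\sqrt{\mu_1}$ using $\cos,\sin(\sqrt2(t-s))$ integrated from $\infty$. The forcing in that formula is $O(e^{-2\kappa s})$, so it is integrable, and a contraction argument in the norm $\sup e^{\kappa t}|v_2|+\sup e^{2\kappa t}|w|$ for $T$ large closes the construction.
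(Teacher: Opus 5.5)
Your proposal is correct and follows essentially the same route as the paper: Fowler variables, the one-dimensional stable manifold of the non-hyperbolic equilibrium $(1/\sqrt{\mu_1},0,0,0)$ tangent to the $v_2$-eigendirection (its existence needs only the spectral gap, so your non-resonance remark and the centre-stable detour are superfluous), the branch with $v_2>0$ (you get positivity from $v_2\sim ce^{-\kappa t}$, while the paper excludes a negative local minimum of $v_2$ by a maximum-principle argument), and $K=-\frac{1}{4\mu_1}$. One sign slip: evaluating \eqref{eq7} in these variables gives $D(u,v)=+|\mathbb S|E$, with your $E$ equal to the paper's $K$. Your relation $D=-c_NE$ with $c_N>0$ would make $D>0$ and contradict Theorem~\ref{th1}; the correct sign gives $D(u,v)=-\frac{|\mathbb S|}{4\mu_1}<0$ directly, although your fallback ($D\neq0$ together with $D\le0$) also reaches the conclusion.
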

Theorem \ref{th120}  indicates that Theorem \ref{th20}-(2) is almost optimal, and also provides a novel phenomenon for the weakly coupled system \eqref{eq0} compared with the scalar equation \eqref{eq1-2}. 
 This provides an evidence that the system \eqref{eq0} is much more complicated than the scalar equation \eqref{eq1-2}.

Finally, we study the criterion of removable singularity.
Clearly, if the isolated singularity $0$ is removable for both $u$ and $v$, then by letting $r\to 0$ in \eqref{eq7}, we immediately obtain $D(u, v)=0$. Our final result is concerned with the removable singularity when $D(u, v)=0$ and $N\geq 4$. 

\begin{theorem}\label{th3}Let $N\geq 4$, $\mu_1, \mu_2, \bb>0$, and $(u, v)$ be a solution of \eqref{eq0} with $D(u, v)=0$. We further assume
$$ \beta>\min\{\mu_1,\mu_2\}\quad \text{or}\quad \beta=\mu_1=\mu_2,\quad\text{for }N=4.$$
Then the isolated singularity $0$ is removable for both $u$ and $v$, i.e., $u, v\in C^2(B_2)$.
\end{theorem}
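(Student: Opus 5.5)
We argue by contradiction and reduce to an ODE analysis in cylindrical coordinates. By Theorem \ref{th1}(3), $D(u,v)=0$ means that \eqref{eq8} fails, so along some sequence $r_k\to0$ we have $\bar u(r_k)+\bar v(r_k)=o(r_k^{-\frac{N-2}{2}})$. Set $t=-\ln|x|$ and $w_1(t)=r^{\frac{N-2}{2}}\bar u(r)$, $w_2(t)=r^{\frac{N-2}{2}}\bar v(r)$. By Theorem \ref{th1}(1),(2), the pair $(w_1,w_2)$ is bounded on $[t_0,\infty)$. It satisfies \eqref{eq30-017} with $\dd=\frac{N-2}{2}$, up to errors of order $O(e^{-t})$ that come from the factors $1+O(|x|)$ in \eqref{eq00}. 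The associated energy
\[
E=\tfrac12(w_1'^2+w_2'^2)-\tfrac{\dd^2}{2}(w_1^2+w_2^2)+\tfrac1{2^\ast}\bigl(\mu_1w_1^{2^\ast}+\mu_2w_2^{2^\ast}+2\bb w_1^{\frac{2^\ast}{2}}w_2^{\frac{2^\ast}{2}}\bigr)
\]
equals, up to a positive factor, $D(u,v)+O(e^{-t})=O(e^{-t})$.

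\textbf{Step 1: $w_1,w_2\to0$ as $t\to\infty$.} Local estimates give bounds on $w_i'$, so the $\omega$-limit set of the trajectory is contained in the zero level set of the autonomous energy and is invariant under the autonomous flow. Because $\min(w_1+w_2)\to0$ along $t_k=-\ln r_k$, the point $(0,0,0,0)$ lies in the $\omega$-limit set. In the autonomous system the origin is a hyperbolic saddle whose zero-energy set near $0$ consists of its stable and unstable manifolds. A standard argument then applies: if the trajectory left every small neighbourhood infinitely often, its $\omega$-limit set would contain a full homoclinic orbit. We rule this out with a Lyapunov-type quantity such as $\frac{d}{dt}(w_1w_1'+w_2w_2')$, or directly by using the approach of CGS via monotonicity of $\bar u$ along the flow. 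We conclude that $w_i\to0$, that is, $u,v=o(|x|^{-\frac{N-2}{2}})$.

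\textbf{Step 2: improved decay and removability.} Once $w_i\to0$, linearizing at the saddle gives exponential decay at the stable rate, $w_i\le Ce^{-\dd t}$. Hence $\bar u,\bar v\le C$, and by \eqref{eq00} $u,v$ are bounded near $0$. The singularity is then removable by standard elliptic theory, since a bounded solution with $L^\infty$ right-hand side is $C^2$ across a point when $N\ge3$. The delicate point is that the linearization is only decoupled at the origin if the coupling terms $\bb w_1^{\frac{2^\ast}{2}-1}w_2^{\frac{2^\ast}{2}}$ are of higher order. For $N\ge5$ we have $\frac{2^\ast}{2}-1<1$, but the product is still $o(|w|)$ because the total degree $2^\ast-1>1$. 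For $N=4$ everything is polynomial, and we need $w_1,w_2$ to vanish at comparable rates to close the argument.

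\textbf{Main obstacle: $N=4$.} Here the structural hypothesis enters. We would rule out one component decaying while the other retains a nontrivial profile. When $\bb>\min\{\mu_1,\mu_2\}$, or when $\bb=\mu_1=\mu_2$, we expect to show that the zero-energy set near the relevant invariant axes contains no trajectory along which, say, $w_1\to0$ while $w_2$ shadows a homoclinic profile of the $\mu_2$-equation. The tool is a comparison of energies: on such a profile the coupling term $\bb w_1w_2^2$ in the $w_1$-equation dominates $\dd^2 w_1$, which forces $w_1$ to grow and contradicts decay. When $\bb=\mu_1=\mu_2$ we instead use the extra invariant $u\nabla v-v\nabla u$ from \cite{COS}. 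I expect this $N=4$ step to be the hardest, consistent with Theorem \ref{th120}, which shows such degenerate behaviour genuinely occurs for small $\bb$.
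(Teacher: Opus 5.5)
Your reduction to the averaged system and your Step 2 are essentially fine. The gap is in Step 1, which carries the whole difficulty and rests on a claim that is false for the system. Near the origin of the four-dimensional phase space, the zero-energy set $\{\tfrac12|p|^2-\tfrac{\delta^2}{2}|q|^2+O(|q|^{2^\ast})=0\}$ is a three-dimensional cone. The stable and unstable manifolds, by contrast, are two-dimensional each. Identifying the zero level set with $W^s\cup W^u$ is a feature of the two-dimensional phase plane of the scalar CGS problem, not of the system.

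Concretely, zero energy lets the trajectory enter a neighbourhood of $0$ along the decaying mode of one component and leave along the growing mode of the other. At valleys $t_k$ of $W=\bar\omega_1+\bar\omega_2$ with $m_k=W(t_k)\to0$, the blow-ups $(\bar\omega_1,\bar\omega_2)(t_k+s)/m_k$ converge to $(\tfrac12e^{\delta s},\tfrac12e^{-\delta s})$ or its mirror image. These profiles have exactly zero linearized energy (Lemma \ref{lemma3-2}). So $D(u,v)=0$ alone does not prevent infinitely many near-passages through $0$, each followed by an excursion shadowing a bubble. Such homoclinic orbits genuinely exist in the autonomous limit (e.g.\ the semi-trivial ones).

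No autonomous Lyapunov quantity can exclude them. The quantity $\frac{d}{dt}(w\cdot w')=|w'|^2+\delta^2|w|^2-\dots$ is positive near $0$, which only says $|w|$ has no local maximum there; that is perfectly consistent with valleys. Your proposal supplies no mechanism excluding this switching, not even for $N\ge5$, which you treat as unproblematic.

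What is missing is the following chain, which is what the paper does.

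\textbf{Unbalanced exits.} The monotonicity $\bar\omega_i'+\delta\bar\omega_i>0$ must be used quantitatively. For the arriving component it is $o(m_k)$ at $t_k$. Integrate $\frac{d}{dt}\bigl(e^{-\delta t}(\bar\omega_i'+\delta\bar\omega_i)\bigr)=-e^{-\delta t}G_i$ (with $G_i>0$ the nonlinearity) across the exit time $\tau_k^+\le t_k+\frac1\delta\ln\frac{C}{m_k}$ from $\{W<\varepsilon\}$. This shows that at every exit one component tends to $0$ (Lemma \ref{lemma3-3}).

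\textbf{The case $N\ge5$.} Near the exit, the small component satisfies $\psi_{2,k}''\le-c\,\psi_{2,k}^{\frac{2^\ast}{2}-1}$ with exponent less than $1$. Testing against $\cos\frac{\pi s}{2\ell}$ forces $\sup\psi_{2,k}\ge c_0>0$, contradicting $\psi_{2,k}\to0$.

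\textbf{The case $N=4$.} You do name the right scenario: one component small while the other shadows its bubble. But the coupling is then linear in the small component, and the relevant comparison is $\beta$ versus $\mu_2$, not $\beta w_2^2$ versus $\delta^2$. With $r=\bar\omega_2/\bar\omega_1$ and $L=\log r$, one has $L''+\bigl(\frac{\bar\omega_1'}{\bar\omega_1}+\frac{\bar\omega_2'}{\bar\omega_2}\bigr)L'=\bar\omega_1^2\bigl[(\mu_1-\beta)+(\beta-\mu_2)r^2\bigr]$ up to factors $1+O(e^{-t})$. This is positive for large $r$ when $\beta>\mu_2$. Hence $\{r>q_+,\ L'>0\}$ is forward invariant and $\{r>q_+,\ L'<0\}$ is backward invariant. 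That is incompatible with valleys, where $r\to1$, occurring both before and after a given valley.

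Your criterion already holds at the peak of the bubble $\psi_2=\sqrt{2/\mu_2}\operatorname{sech}t$ once $\beta>\mu_2/2$, and by itself gives no contradiction. The true threshold $\beta=\mu_2$ is where $-\partial_t^2+1-\beta\psi_2^2$ stops admitting a positive solution on $\mathbb R$; at $\beta=\mu_2$ that solution is $\operatorname{sech}t$. What fails is positivity of $w_1$ through a switch, not its decay.

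\textbf{The case $\beta=\mu_1=\mu_2$.} The invariant you cite is the right one, but it too must be evaluated at the valleys. The constant $\int_{\mathbb S}(\omega_1\partial_t\omega_2-\omega_2\partial_t\omega_1)\,d\theta$ equals $\pm\frac{|\mathbb S|}{2}m_k^2+o(m_k^2)$ at $t_k$. Hence it is both zero and nonzero.

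Finally, the $N=4$ concern in your Step 2 is misplaced. Once $W\to0$, the inequality $W''-\delta^2W\ge-CW^{2^\ast-1}$ for the sum gives $W\le Ce^{-\delta t}$ via $y=W'+\delta W$, regardless of the relative rates of the components (Lemma \ref{lemma3-1}).
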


Remark that for the special case $N=4$ with $\beta=\mu_1=\mu_2$, system \eqref{eq0} is essentially of the form \eqref{eq05}, and Theorem \ref{th3} for this special case is already covered by \cite[Theorem 1.5]{GKS}. Theorem \ref{th3} is new for all other cases.
In view of Theorem \ref{th3}, the case $N=3$ remains completely open. We should study this open problem elsewhere.

The asymptotic symmetry \eqref{eq00} can be proved by following the approach
of the ``measure theoretic" variation of the method of moving planes  from \cite{CGS,Li1996}. 
This asymptotic symmetry allows us to turn to the study of the following ODE
\begin{equation}\label{eq3-017}
\begin{cases}
\bar{\omega}_1''-\dd^2\bar{\omega}_1+\left(\mu_1\bar{\omega}_1^{2^\ast-1}+\bb \bar{\omega}_1^{\frac{2^\ast}{2}-1}\bar{\omega}_2^{\frac{2^\ast}{2}}\right)
(1+O(e^{-t}))=0,\\
\bar{\omega}_2''-\dd^2\bar{\omega}_2+\left(\mu_2\bar{\omega}_2^{2^\ast-1}+\bb \bar{\omega}_2^{\frac{2^\ast}{2}-1}\bar{\omega}_1^{\frac{2^\ast}{2}}\right)
(1+O(e^{-t}))=0,
\end{cases}
\end{equation}
for $t>0$ large.
Compared to the associated ODE of the scalar equation \eqref{eq1-2} studied in \cite{CGS}, the weakly coupled terms $\bar{\omega}_1^{\frac{2^\ast}{2}-1}\bar{\omega}_2^{\frac{2^\ast}{2}}$ and $\bar{\omega}_2^{\frac{2^\ast}{2}-1}\bar{\omega}_1^{\frac{2^\ast}{2}}$ of \eqref{eq3-017} brings many difficulties. 
Compared to the associated ODE \eqref{eq30-017}
of system \eqref{eq1-5} studied in  \cite{ChenLin1}, the error terms $O(e^{-t})$ of \eqref{eq3-017} brings additional difficulties. For example, for \eqref{eq30-017},
the functions
$$f_i(t):=-\frac12|v_i'(t)|^2+\frac{\delta^2}{2}v_i(t)^2-\frac{\mu_i}{2^\ast}v_i(t)^{2^\ast},\quad i=1,2$$
satisfy $f_i'(t)=\beta v_i^{\frac{2^\ast}{2}-1}v_{3-i}^{\frac{2^\ast}{2}} v_i'(t)$, namely the monotonicity of $f_i$ is exactly the same as the monotonicity of $v_i$. This property plays a crucial role in the ODE analysis of \eqref{eq30-017} in \cite{ChenLin1}. However, this property does not hold for \eqref{eq3-017} due to the error terms. Thus, many ideas of \cite{CGS, ChenLin1} can not work for \eqref{eq3-017}, which requires us to develop new ideas in the proofs of Theorems \ref{th2}, \ref{th20} and \ref{th3}. Here our techniques depend essentially on the relation between $\frac{2^\ast}{2}$ and $2$. In particular, our arguments work well for $N\geq 5$ due to $\frac{2^\ast}{2}<2$. Since $\frac{2^\ast}{2}=2$ for $N=4$, most arguments of $N\geq 5$ can not be applied to $N=4$, and we need to develop different ideas to deal with $N=4$. Unfortunately, since $\frac{2^\ast}{2}=3>2$ for $N=3$, our arguments of proving Theorem \ref{th3} for $N\geq 4$ fail for $N=3$, so the case $N=3$ requires different ideas and remains open.

The sturcture of the paper is organized as follows. In Section 2, we prove the asymptotic symmetry \eqref{eq00}, so we can turn to study the ODE \eqref{eq3-017}. By analysing this ODE, we complete the proofs of Theorems \ref{th1}, \ref{th2} and \ref{th20} in Section 3. 
In Section 4, we prove Theorems \ref{thm:N3-semi} and \ref{th120} by using the stable invariant manifold theory of ODEs. Finally, we prove Theorem \ref{th3} in Section 5.
Throughout the paper, we always use $C, c, C_0, c_0, C_1, c_1,\cdots$ to denote various positive constants that may be different in different places.

\section{Asymptotic symmetry}

Let $(u,v)$ be a positive solution of \eqref{eq0}. Recall $\mathbb{S}=\{\theta\in \RN\;|\; |\theta|=1\}$ and define the spheical averages
\be\label{eq2-1}\bar{u}(|x|)=\bar{u}(x):=\frac1{|\mathbb{S}|}\int_{\mathbb{S}}u(|x|\theta)\,d\theta,\quad \bar{v}(|x|)=\bar{v}(x):=\frac1{|\mathbb{S}|}\int_{\mathbb{S}}v(|x|\theta)\,d\theta.\ee
The main purpose of this section is to prove the following asymptotic symmetry.

\begin{theorem}\label{thm-as}
Let $(u,v)$ be a positive solution of \eqref{eq0}, and $(\bar {u}, \bar{v})$ be the spherical averages defined in \eqref{eq2-1}. Then
\begin{equation}\label{eq02-1}u(x)=(1+O(|x|))\bar{u}(|x|),\quad v(x)=(1+O(|x|))\bar{v}(|x|),\quad\text{for $|x|>0$ small}. \end{equation}
\end{theorem}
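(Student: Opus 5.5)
We follow the ``measure theoretic'' moving plane method of Caffarelli--Gidas--Spruck as streamlined by Li \cite{Li1996}, adapted to the cooperative structure of \eqref{eq0}. The system is cooperative because $\beta>0$: the coupling term in each equation is nondecreasing in the other component. This is exactly what makes a comparison principle for reflected pairs hold.

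\textbf{Step 1: Kelvin transform.} Set $\tilde u(y)=|y|^{2-N}u(y/|y|^2)$ and $\tilde v(y)=|y|^{2-N}v(y/|y|^2)$. Since the exponents are critical, $(\tilde u,\tilde v)$ solves the same system in $\{|y|>1/2\}$, and the singularity now sits at infinity.

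\textbf{Step 2: moving planes with a point singularity.} For $\lambda$ large and a direction $e$, reflect across the plane $y\cdot e=\lambda$. Set $w_1=\tilde u_\lambda-\tilde u$ and $w_2=\tilde v_\lambda-\tilde v$. The mean value theorem gives
\[
-\Delta w_i=a_{i1}w_1+a_{i2}w_2,\qquad a_{12},a_{21}\ge0.
\]
So $(w_1,w_2)$ satisfies a cooperative linear system, which admits a maximum principle on narrow or small-measure sets. We run the argument in the form used in \cite{CGS,Li1996}: we prove that $\tilde u(y)\ge \tilde u(y^\lambda)$ for every $y$ with $y\cdot e\ge\lambda$ and $|y^\lambda|$ large, and likewise for $\tilde v$. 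Here $y^\lambda$ denotes the reflection of $y$ across the plane.

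The singular point $y=0$ of the original ball, i.e.\ the region near infinity for $\tilde u$, and the finite boundary $|y|=1/2$ both have to be handled. Near the singularity we use the fact that positive superharmonic functions are bounded below by $c|y|^{2-N}$. We also use a harmonic correction in the spirit of Li's device: compare $w_i$ with a small multiple of $\varepsilon(|y|^{2-N}-\ldots)$, so that the negative parts of $w_i$ are supported in a set of small measure. On that set the cooperative maximum principle applies. To control the coefficients there we need $a_{ij}\in L^{N/2}$ locally, which requires an a priori upper bound.

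\textbf{Step 3: upper bound.} We will first show $u+v\le C|x|^{-(N-2)/2}$. This is a standard blow-up argument: rescale around points $x_k\to0$ where the bound fails, use the doubling lemma, and pass to a nonnegative nontrivial entire solution of the coupled system. That limit contradicts Liouville theorems for the system, which hold in the subcritical-type setting of a singular limit, or is handled by using the classification of entire solutions \cite{ChenLi,GL} together with the Pohozaev energy.

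\textbf{Step 4: conclusion.} The moving-plane step, done uniformly in all directions with $\lambda\ge\lambda_0$, yields for the original functions
\[
u(x)\le u(z)\ \text{ whenever } |z|\le |x|-C|x|^2 ,
\]
and the same for $v$. Equivalently, the oscillation of $u$ on the sphere $\partial B_r$ is controlled by $\max_{\partial B_r}|\nabla u|\cdot Cr^2$. Combining this with the upper bound and with the Harnack and gradient estimates obtained on dyadic annuli after rescaling, which give $|\nabla u|\le Cu/|x|$, we get
\[
\max_{\partial B_r}u-\min_{\partial B_r}u\le Cr\,\min_{\partial B_r}u .
\]
This gives $u=(1+O(r))\bar u$, and the same argument gives the statement for $v$.

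\textbf{Main obstacle.} We expect the hardest step to be the start and the continuation of the moving plane for the coupled pair. Both $w_1$ and $w_2$ must be shown nonnegative simultaneously, even though the two components can have very different singular behavior, for instance one component bounded and the other singular. The plan is to use the cooperative maximum principle on sets of small measure, with a single measure threshold chosen for both components.
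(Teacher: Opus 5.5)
The genuine gap is Step 3, and both Step 2 (you need $a_{ij}\in L^{N/2}$) and Step 4 (you need $|\nabla u|\le Cu/|x|$ from Harnack on annuli) rest on it. At the critical exponent a doubling/blow-up argument cannot give $u+v\le C|x|^{-(N-2)/2}$. Rescaling at points where $|x|^{(N-2)/2}(u+v)$ is large pushes the singularity to infinity and produces a bounded, nonnegative, nontrivial entire solution of the critical system on $\mathbb R^N$. Such solutions exist, e.g.\ $(\mu_1^{-(N-2)/4}U,0)$ with $U$ an Aubin--Talenti bubble. So there is no Liouville theorem to contradict. The classification in \cite{ChenLi,GL} only describes such a limit; it does not exclude it, and you give no mechanism by which the Pohozaev invariant would. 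Blow-up proofs of such bounds in critical problems need an extra global monotonicity input, such as moving spheres. The paper avoids the issue by reversing the order. Theorem~\ref{thm-as} is proved with no upper bound at all. Only afterwards does Lemma~\ref{lemma2-6} derive $\bar u(r)\le Cr^{-(N-2)/2}$, from the identity \eqref{eq3-1} with $\eta=|x|^2$, H\"older's inequality and the monotonicity \eqref{eq02-8} of $\bar u$, and this bound is transferred to $u$ through Theorem~\ref{thm-as}.

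Step 2 also lacks the mechanism that makes the moving plane run. After the plain Kelvin transform you only know $\overline U,\overline V\to0$ (Lemma~\ref{lemma2-2}), not pointwise decay, so the plane cannot be started from infinity. A local small-measure maximum principle gives nothing there either: even the bound $U^{2^*-2}\lesssim|y|^{-2}$ that Step 3 would supply is not in $L^{N/2}$ near infinity.

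The paper instead uses the following chain:
\begin{enumerate}
\item The shifted Kelvin transform \eqref{eq2-06} decays like $|x|^{2-N}$ and places the singularity at the finite point $\bar x_\varepsilon$, so the plane starts by \cite[Lemma 2.3]{CGS}.
\item The maximum principle for superharmonic functions with an isolated singularity shows that the critical position is not caused by $\bar x_\varepsilon$. Hence contact occurs on $\partial B_1$, which gives the cylinder bound $U+V\ge 2A$ of Lemma~\ref{lemma2-3}.
\item Together with $\overline U+\overline V\to0$, this yields $|E(t)|\to0$ and then $R(\theta)<M$ for every $\theta$ (Lemmas~\ref{lemma2-4}--\ref{lemma2-5}).
\end{enumerate}
The cooperative sign structure you flag as the main obstacle is the routine part.

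Finally, Step 4 has the monotonicity backwards. The reflection gives $\max_{\partial B_{R+2M}}U\le\min_{\partial B_R}U$. In the original variables this reads $\rho^{N-2}\max_{\partial B_\rho}u\le(\rho')^{N-2}\min_{\partial B_{\rho'}}u$ with $\rho'=\rho/(1-2M\rho)>\rho$, so it is $|x|^{N-2}u$ that increases outward. Your inequality $u(x)\le u(z)$ for $|z|\le|x|-C|x|^2$ is false, for instance, for a solution regular at $0$ with $\nabla u(0)\neq0$: take $x,z$ on opposite sides of $0$ along $\nabla u(0)$. The paper closes with the superharmonic bound $U(x)\ge(R/|x|)^{N-2}\min_{\partial B_R}U$ for $|x|\ge R$. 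This gives $\min_{\partial B_R}U\ge(1+O(R^{-1}))\max_{\partial B_R}U$ with no gradient estimate and no upper bound.
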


We will prove Theorem \ref{thm-as} by following the approach of the ``measure theoretic" variation of the method of moving planes  from \cite{CGS,Li1996}.
Write $$f(u, v)=\mu_1 u^{2^\ast-1}+\beta u^{\frac{2^\ast}{2}-1} v^{\frac{2^\ast}{2}},\quad g(u, v)=\mu_2 v^{2^\ast-1}+\beta v^{\frac{2^\ast}{2}-1}u^{\frac{2^\ast}{2}}$$ for convenience.

\bl\label{lemma2-1} We have $f, g\in L^1(B_1)$, $u, v\in L^{2^\ast-1}(B_1)$ and
$$-\Delta u=f(u, v),\quad-\Delta v=g(u, v)\quad\text{in $B_1$}$$
in the sense of distribution. Consequently, $u, v\in W^{1, q}(B_1)$ for any $1\leq q<\frac{N}{N-1}$.
\el

\begin{proof}
Let $\eta\in C^{\iy}(\R)$ such that $\eta'(t)\le 0$, $\eta(t)\equiv 1$ for $t\le 1$ and
$\eta(t)\equiv 0$ for $t\ge\frac32$. Define
$\eta_k(t):=\eta(t/k)$, then $\Phi_k(t):=\int_{0}^t\eta_k(s)ds\le 2k$.
For $0<\e\ll 1$ and $k\gg 1$, we have
{\allowdisplaybreaks
\begin{align*}
&\quad\int_{B_{\frac32}}f(u, v)\eta(|x|) (1-\eta_\e)(|x|)\eta_k(u) dx\\&=-\int_{B_{\frac32}}\eta(|x|) (1-\eta_\e)(|x|)\eta_k(u) \Delta u dx
=\int_{B_{\frac32}}\nabla u\nabla[\eta(|x|) (1-\eta_\e)(|x|)\eta_k(u)] dx\\
&=\int_{B_{\frac32}}\underbrace{|\nabla u|^2\eta(|x|) (1-\eta_\e)(|x|)\eta'_k(u)}_{\le 0} dx +\int_{B_{\frac32}\setminus B_1}\underbrace{\nabla u\nabla\eta(|x|) (1-\eta_\e)(|x|)\eta_k(u)}_\text{bounded} dx\\
&\quad-\int_{B_{\frac32\e}\setminus B_\e}\nabla u \nabla\eta_\e(|x|)\eta(|x|)\eta_k(u) dx\\
&\le C-\int_{B_{\frac32\e}\setminus B_\e}\nabla\eta_\e(|x|)\nabla\Phi_k(u) dx
=C+\int_{B_{\frac32\e}\setminus B_\e}\Delta\eta_\e(|x|)\Phi_k(u) dx\\
&\le C+Ck\e^{N-2}.
\end{align*}
}%
Letting $\e\to 0$, we obtain $\int_{B_{1}}f(u, v)\eta_k(u) dx\le C$. Then we let $k\to+\iy$ and obtain
$$\int_{B_{1}}f(u, v)dx\le C.$$
Similarly, $\int_{B_{1}}g(u, v)dx\le C$. Consequently, $u, v\in L^{2^\ast-1}(B_1)$. 

For any $\psi\in C_0^1(B_1)$, we have
{\allowdisplaybreaks
\begin{align*}
&\quad\int_{B_1}f(u, v)\psi(x)(1-\eta_\e(|x|))dx\\
&=-\int_{B_1}\Delta u\psi(1-\eta_\e)dx=
\int_{B_1}u\{-\Delta[\psi(1-\eta_\e)]\}dx\\
&=\int_{B_1}u(-\Delta\psi)(1-\eta_\e)dx+2\int_{B_1}u\nabla\psi\nabla\eta_\e dx
+\int_{B_1}u\psi\Delta\eta_\e dx,
\end{align*}
}%
which implies from $u\in L^{2^\ast-1}(B_1)$ that
{\allowdisplaybreaks
\begin{align*}
&\quad\left|\int_{B_1}(f(u, v)\psi+u\Delta\psi)(1-\eta_\e)dx\right|\\
&\le\left|\int_{B_1}\left(u\psi\Delta\eta_\e+2u\nabla\psi\nabla\eta_\e\right) dx\right|\le \frac{C}{\e^2}\int_{B_{\frac32\e}\setminus B_\e}udx\\
&\le \frac{C}{\e^2}\bigg(\int_{B_{\frac32\e}\setminus B_\e}u^{2^\ast-1}dx\bigg)^\frac{1}{2^\ast-1}\bigg(\int_{B_{\frac32\e}\setminus B_\e}1dx\bigg)^\frac{4}{N+2}\\&\leq C\e^{2\frac{N-2}{N+2}}\to 0,\qquad\text{as}\quad \e\to 0.
\end{align*}
}%
Since $f(u, v)\psi+u\Delta\psi\in L^1(B_1)$, we get
$$\int_{B_1}(f(u, v)\psi+u\Delta\psi)dx=0,$$
namely, $-\Delta u=f(u, v)$ in $B_1$ in the sense of distribution. Let
$$
\begin{cases}-\Delta u_1=f(u, v)\quad\text{in $B_1$},\\
u_1|_{\partial B_1}=0,\end{cases}\quad\begin{cases}-\Delta u_2=0\quad\text{in $B_1$},\\
u_2|_{\partial B_1}=u,\end{cases}
$$
then $u_2\in C(\overline{B_1})\cap C^2(B_1)$ and $u_1\in W_0^{1, q}(B_1)$ for any $1\leq q<\frac{N}{N-1}$. From this we conclude that $u=u_1+u_2\in W^{1, q}(B_1)$. Similarly,  $-\Delta v=g(u, v)$ in $B_1$ in the sense of distribution and $v\in W^{1,q}(B_1)$.
The proof is complete.
\end{proof}

Recall the spherical averages $(\bar {u}, \bar{v})$ defined in \eqref{eq2-1}, we have
$$
\begin{cases}
\begin{split}-\Delta \bar{u} = \overline{f(u, v)}=:\bar{f}(|x|)\\
-\Delta \bar{v} =\overline{g(u, v)}=:\bar{g}(|x|)\\
\end{split}  \quad   \text{in $B_2\setminus\{0\}$}.\end{cases}
$$

\begin{lemma}\label{lemma2-2} As $r\to 0$, 
$$r^{N-2}\bar{u}(r)\to 0,\quad r^{N-1}\bar{u}'(r)\to 0,\quad r^{N-2}\bar{v}(r)\to 0,\quad r^{N-1}\bar{v}'(r)\to 0.$$
\end{lemma}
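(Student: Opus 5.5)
We use the integrability information from Lemma \ref{lemma2-1} together with the radial ODE satisfied by the spherical averages. Since $u\in L^{2^\ast-1}(B_1)\subset L^1(B_1)$ and $\bar f,\bar g\in L^1$ in the radial sense, we have
$$\int_0^1 \bar u(r)r^{N-1}\,dr<\infty, \qquad \int_0^1 \bar f(r)r^{N-1}\,dr<\infty.$$
The proof for $v$ is identical, so we treat only $u$.

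\emph{Step 1: the flux $r^{N-1}\bar u'(r)$.} The averaged equation reads $-(r^{N-1}\bar u')'=r^{N-1}\bar f(r)\ge 0$. Hence $r^{N-1}\bar u'(r)$ is nonincreasing in $r$, and since $\bar f r^{N-1}\in L^1(0,1)$, the limit
$$L:=\lim_{r\to0}r^{N-1}\bar u'(r)$$
exists and is finite.

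\emph{Step 2: identify $L$ through the distributional equation.} For a radial test function $\psi\in C_0^\infty(B_1)$, Lemma \ref{lemma2-1} gives $\int u(-\Delta\psi)=\int f\psi$. Computing the same integral with the classical equation on $B_1\setminus B_\e$ and integrating by parts leaves boundary terms on $\partial B_\e$ of the form
$$|\mathbb S|\,\e^{N-1}\big(\bar u'(\e)\psi(\e)-\bar u(\e)\psi'(\e)\big).$$
Choose $\psi$ with $\psi\equiv1$ near $0$. Then $\psi'(\e)=0$ for small $\e$, so the boundary term tends to $|\mathbb S|\psi(0)L$. Comparing with the distributional identity forces $L=0$, which is the second claim. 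Equivalently, one can argue that $L\neq0$ would give $\bar u\sim \frac{-L}{N-2}r^{2-N}$ with $-L>0$, which contradicts the distributional equation, since it would create a Dirac mass $c\delta_0$ with $c\ne0$.

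\emph{Step 3: deduce $r^{N-2}\bar u(r)\to0$.} Since $r^{N-1}\bar u'$ is monotone with limit $0$, we get $|r^{N-1}\bar u'(r)|\le \e(r)\to0$. Integrating,
$$|\bar u(r)-\bar u(1)|\le \int_r^1 \e(s)s^{1-N}\,ds.$$
Splitting the integral at a point $\rho$ with $\e$ small on $(0,\rho)$ shows that this is $o(r^{2-N})$, so $r^{N-2}\bar u(r)\to0$.

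The main obstacle is Step 2: justifying that the boundary term converges to $\psi(0)L$ and that the distributional formulation excludes a point mass. This is routine once the limit $L$ is known to exist, which Step 1 guarantees.
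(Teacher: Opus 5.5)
Your proof is correct and follows essentially the paper's route. The paper writes directly $r^{N-1}\bar u'(r)=-\frac{1}{|\mathbb S|}\int_{B_r}f(u,v)\,dx$ from the distributional equation of Lemma \ref{lemma2-1} on all of $B_1$; your radial cut-off computation in Step 2 is exactly the justification that no point mass appears, and it already yields $\varepsilon^{N-1}\bar u'(\varepsilon)\to 0$ without Step 1. The paper then integrates once more and splits at an intermediate radius $r_0$, as in your Step 3.
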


\begin{proof}
By \eqref{eq2-1} and Lemma \ref{lemma2-1}, we have that for $0<r<1$, 
\begin{align}\label{eq2-55}r^{N-1}\bar{u}'(r)&=\frac{r^{N-1}}{|\mathbb{S}|}\int_{\mathbb{S}}\nabla u(r\theta)\cdot\theta d\theta=
\frac{1}{|\mathbb S|}\int_{\partial B_r}\frac{\partial u}{\partial \nu}d \sigma=\frac1{|\mathbb{S}|}\int_{B_r}\Delta u dx\nonumber\\
&=-\frac1{|\mathbb{S}|}\int_{B_r}f(u,v)dx,\end{align}
so
 $r^{N-1}\bar{u}'(r)=o(1)$ as $r\to 0$. Furthermore,
we have for $0<r<r_0<1$ that
$$\bar{u}(r_0)-\bar{u}(r)=-\frac1{|\mathbb{S}|}\int_r^{r_0}\frac{\int_{B_\rho}f dx}{\rho^{N-1}} d\rho,$$
which implies
$$r^{N-2}\bar{u}(r)=r^{N-2}\bar{u}(r_0)+\frac{r^{N-2}}{|\mathbb{S}|}\int_r^{r_0}\frac{\int_{B_\rho}f dx}{\rho^{N-1}} d\rho.$$
Note that
$$\int_r^{r_0}\frac{\int_{B_\rho}f dx}{\rho^{N-1}} d\rho\le \frac{r^{2-N}-r_0^{2-N}}{N-2}\int_{B_{r_0}}f dx,$$
which gives
$$\frac{r^{N-2}}{|\mathbb{S}|}\int_r^{r_0}\frac{\int_{B_\rho}f dx}{\rho^{N-1}} d\rho
\le\frac{1}{(N-2)|\mathbb{S}|}(1-r^{N-2}r_0^{2-N})\int_{B_{r_0}}f dx.$$
Thus,
$$0\leq\limsup_{r\to 0} r^{N-2}\bar{u}(r)\le\frac{1}{(N-2)|\mathbb{S}|}\int_{B_{r_0}}f dx$$
for any $0<r_0<1$. Letting $r_0\to 0$ we obtain $\lim_{r\to 0} r^{N-2}\bar{u}(r)=0$. The statements for $\bar v$ can be proved similarly.
\end{proof}

Define the Kelvin transform
$$U(x):=|x|^{2-N}u\left(\frac{x}{|x|^2}\right),\quad V(x):=|x|^{2-N}v\left(\frac{x}{|x|^2}\right),\quad\text{for }\;|x|>\frac12,$$
and again denote by $\overline{U}(x), \overline{V}(x)$ 
to be the spheical averages of $U(x), V(x)$ respectively.
Since the maximum principle holds for super harmonic functions with isolated singularities (cf.
\cite[Lemma 2.1]{CL1}), there exists a constant $A>0$ such that 
\be\label{eq2-05}u(x), v(x)\ge 4A>0,\quad \forall\,|x|\le \frac{3}{2},\ee so
$$U(x), V(x)\ge\frac{4A}{|x|^{N-2}},\quad\forall |x|\ge \frac{2}{3}.$$
Furthermore, it follows from Lemma \ref{lemma2-2} that
\begin{equation}\label{eq2-10}\overline{U}(x), \overline{V}(x)\to 0\quad\text{as $|x|\to +\iy$}.\end{equation}

For $x\in\RN$, $\la>0$ and any direction $\theta\in \mathbb{S}$, let $$x(\la, \theta ):=x+2(\la-x\cdot \theta)\theta$$ be the reflection point of $x$ about the hyperplane $$P(\la, \theta):=\{x\in\RN \;|\; x\cdot\theta=\la\},$$ and let $$\Sigma(\la,\theta):=\{x\in\RN \;|\; x\cdot\theta>\la\}$$ be the positive half space in the $\theta$ direction at $\la$.
Let $R(\theta)$ be the smallest nonnegative number such that
\begin{equation}\label{eq2-03}U(x)\le U(x(\la,\theta)),\; V(x)\le V(x(\la,\theta))\quad\text{for $x\in\Sigma(\la, \theta)$ and $x(\la,\theta)\not\in B_1$}\end{equation}
holds for all $\la\ge R(\theta)$ (At the moment $R(\theta)$ may be $+\iy$). We want to prove the existence of $M>0$ large such that $R(\theta)<M$ for all $\theta\in\mathbb S$; see Lemma \ref{lemma2-5} below.

\bl\label{lemma2-3}There exists $\dd_0\in (0,1)$ independent of $\theta\in\mathbb S$ such that, if $R(\theta)\ge 30$, then $U(z)+V(z)\ge 2A$ in the cylinder 
$$\Om_{\theta}:=\Big\{z=y+s\theta\;\Big|\; y\in B_{\dd_0}(y_\theta), s\in\mathbb{R}, 1<z\cdot\theta<2R(\theta)-1\Big\}$$ 
of radius ${\dd_0}$, where $y_\theta\in \mathbb S$ may depend on $\theta$.\el

\begin{proof} Take any $\theta\in\mathbb S$ such that $R(\theta)\geq 30$.
Without loss of generality, we may assume $\theta=e_1=(1,0,\cdots,0)$, and write $$x^\la=x(\la,e_1)=(2\la-x_1,x_2,\cdots,x_N),\quad\Sigma_\la=\Sigma(\la, e_1)=\{x\in\mathbb R^N \,|\, x_1>\la\}$$ for convenience. For fixed $\e\in (0,\frac1{100})$ small and $|x|\ge 1$, we define $x_\e:=(\e, 0,\cdots, 0)$ and
\be\label{eq2-06}U_\e(x):=|x|^{2-N}u\left(\frac{x}{|x|^2}+x_\e\right),\quad V_\e(x):=|x|^{2-N}v\left(\frac{x}{|x|^2}+x_\e\right).\ee
Remark that $\frac{x}{|x|^2}+x_\e=0$ implies $x=\bar{x}_\e:=(-\e^{-1}, 0,\cdots, 0)$. Therefore,
$$
\begin{cases}
\Delta U_\e+f(U_\e, V_\e)=0,\\
\Delta V_\e+g(U_\e, V_\e)=0,
\end{cases}\text{in $\RN\setminus D_\e$},\quad\text{where }\; D_\e:=B_1\cup\{\bar{x}_\e\}.
$$

First, we claim that there exists large $\la_0>0$ such that for any $\la\geq \la_0$, 
\be\label{movingplane}U_\e(x)\le U_\e(x^\la),\quad V_\e(x)\le V_\e(x^\la),\quad\text{for }\;x\in\Sigma_\la\;\text{and} \;x^\la\not\in D_\e.\ee

By \cite[Lemma 2.3]{CGS}, there exist large positive constants $\bar{\la}, R>|\bar{x}_\e|=\frac{1}{\e}$ such that for any $\la\ge \bar{\la}$,
\be\label{eq2-50}U_\e(x)< U_\e(x^\la),\quad V_\e(x)< V_\e(x^\la)\quad\text{for $x\in\Sigma_\la$ satisfying $|x^\la|>R$}.\ee
Since there is $C_0>0$ such that
$$\min_{|x|=1, R}U_\e\ge C_0>0,\quad \min_{|x|=1, R}V_\e\ge C_0>0,$$
the maximum principle gives
$$\min_{B_R\setminus D_\e}U_\e\ge C_0>0,\quad \min_{B_R\setminus D_\e}V_\e\ge C_0>0.$$
Clearly $U_\e(x), V_\e(x)\to 0$ as $|x|\to+\iy$ (At the moment we do not know whether this property holds for $U, V$, and this is the reason why we consider $U_\e, V_\e$ here), so there exists $\la_0>\bar{\la}$ such that $U_\e(x), V_\e(x)< C_0$ for $|x|\ge \la_0$. Consequently, for any $\la\ge \la_0$ and any $x\in \Sigma_\la$, we have $|x|\ge x_1>\la\ge \la_0$, so if $|x^\la|\le R$ and $x^\la\not\in D_\e$, then $U_\e(x^\la)\ge C_0>U_\e(x)$ and so does $V_\e$. Together with \eqref{eq2-50}, we see that
\eqref{movingplane} holds.

Let $R_\e$ be the smallest nonnegative number such that (\ref{movingplane}) holds for all $\la\ge R_\e$. 
Then by \eqref{eq2-03} and \eqref{movingplane}, it is easy to see that
\begin{equation}\label{eq2-07}\liminf_{\e\to 0}R_{\e}\geq R(e_1)\geq 30,\end{equation}
so we may assume $R_\e\ge 25$ for $\e>0$ small. Clearly,
$$U_\e(x)\le U_\e(x^{R_\e}),\;V_\e(x)\le V_\e(x^{R_\e})\quad\text{for $x\in\Sigma_{R_\e}$ and $x^{R_\e}\not\in D_\e$}.$$
By the definition of $R_\e$ and the maximum principle,
there are only two cases:
\begin{itemize}
\item[(1)] there exists $\tilde{x}_\e\in\Sigma_{R_{\e}}$ with $y_\e:=\tilde{x}_\e^{R_\e}\in\partial B_1$ such that either $U_\e(\tilde{x}_\e)=U_\e(y_\e)$ or $V_\e(\tilde{x}_\e)=V_\e(y_\e)$.

\item[(2)] there exists $x_k\in\Sigma_{R_\e}$ such that $x_k^{R_\e}\to\bar{x}_\e$ (the singular point) and either $U_\e(x_k)-U_\e(x_k^{R_\e})\to0$ or $V_\e(x_k)-V_\e(x_k^{R_\e})\to0$ as $k\to+\iy$.

\end{itemize}

We claim that (1) must hold. Indeed,
if (2) holds but (1) does not hold, we may assume, without loss of generality, that $U_\e(x_k)-U_\e(x_k^{R_\e})\to0$. Note that
$$-\Delta(U_\e(x^{R_\e})-U_\e(x))=f(U_\e(x^{R_\e}), V_\e(x^{R_\e}))-f(U_\e(x), V_\e(x))\ge 0$$
in $\Sigma_{R_\e}$ with $x^{R_\e}\not\in D_\e$. Fix $\dd>0$ sufficiently small. If there exists $x_0^{R_\e}\in\partial B_\dd(\bar{x}_\e)$ such that
$U_\e(x_0^{R_\e})-U_\e(x_0)=0$, then the strong maximum principle gives $U_\e(x^{R_\e})\equiv U_\e(x)$ and so (1) holds, a contradiction with our assumption. Hence there is a constant $C_1>0$ such that
$$\min_{x^{R_\e}\in \partial B_\dd(\bar{x}_\e)}(U_\e(x^{R_\e})-U_\e(x))\ge C_1>0.$$
Applying the maximum principle, we get
$$\min_{x^{R_\e}\in B_\dd(\bar{x}_\e)}(U_\e(x^{R_\e})-U_\e(x))\ge C_1>0,$$
a contradiction with $x_k^{R_\e}\to\bar{x}_\e$ and $U_\e(x_k)-U_\e(x_k^{R_\e})\to0$. 

Therefore, (1) holds, namely there exists $\tilde{x}_\e\in\Sigma_{R_\e}$ with $y_\e=\tilde{x}_\e^{R_\e}\in\partial B_1$ such that either $U_\e(\tilde{x}_\e)=U_\e(y_\e)$ or $V_\e(\tilde{x}_\e)=V_\e(y_\e)$.
Without loss of generality, we may assume $U_\e(\tilde{x}_\e)=U_\e(y_\e)$. It follows from $y_\e\in\partial B_1$, \eqref{eq2-06} and \eqref{eq2-05} that
$$|\tilde{x}_\e-(2R_{\e},0,\cdots,0)|=1,$$
$$U_\e(\tilde{x}_\e)=U_\e(y_\e)=u(y_\e+x_\e)\ge 4A.$$

Note that $B_2(\tilde{x}_\e)$ reflects into $B_{10}\setminus B_1$ with respect to the hyperplane $\{x : x_1=R_\e+3\}$. Then it follows from \eqref{movingplane} with $\lambda=R_\e+3$ and \eqref{eq2-05} that there exist $C_2, C_3>0$ independent of $\theta\in\mathbb S$ and small $\e$ such that
$$\sup_{B_2(\tilde{x}_\e)}U_\e\le \sup_{B_{10}\setminus B_1}U_\e\le C_2\sup_{B_{3/2}\setminus B_{1/12}}u\le C_3,$$
and the same holds for $V_\e$. Consequently,
$$\Delta (U_\e+V_\e)+C_\e(x)(U_\e+V_\e)=0\quad\text{in $B_2(\tilde{x}_\e)$},$$
where $C_\e(x)$ is bounded with a uniform bound independent of $\theta\in\mathbb S$ and small $\e$. Then by the gradient estimate, there is a constant $C>0$ independent of $\theta\in\mathbb S$ and small $\e$  such that
$$\sup_{B_1(\tilde{x}_\e)}|\nabla U_\e+\nabla V_\e|\le C.$$
Since $U_\e(\tilde{x}_\e)\ge 4A$, so there exists ${\dd_0}\in (0,1)$ independent of $\theta\in\mathbb S$ and small $\e$ such that
$$U_\e+V_\e\ge 2A \quad\text{in $B_{\dd_0}(\tilde{x}_\e)$}.$$
By the definition of $R_\e$, $U_\e+V_\e$ is monotone decreasing in the $e_1$ direction for $x\in\Sigma_{R_\e}=\{x : x_1>R_{\e}\}$. From here and $|\tilde{x}_\e|=|y_\e^{R_\e}|\ge 2R_\e-1$, we obtain
$$U_\e+V_\e\ge 2A \;\text{for}\; x\in \Big\{z=y+se_1\;\Big|\; y\in B_{\dd_0}(y_\e), R_\e<z_1=z\cdot e_1<2R_\e-1\Big\}.$$
Applying $(U_\e+V_\e)(x)\le (U_\e+V_\e)(x^{R_\e})$ for $x\in\Sigma_{R_\e}=\{x : x_1>R_{\e}\}$ and $x^{R_\e}\not\in D_\e$, we get
$$U_\e+V_\e\ge 2A \quad\text{for}\; x\in \Big\{z=y+se_1\;\Big|\; y\in B_{\dd_0}(y_\e), 1<z\cdot e_1<2R_\e-1\Big\}.$$
Up to a subsequence, $y_\e\to y_{e_1}\in\mathbb S$ as $\e\to 0$. Using \eqref{eq2-07}, we obtain $U+V\geq 2A$ for $x\in\Omega_{e_1}$. This completes the proof.
\end{proof}

\begin{lemma}\label{lemma2-4} For $t>0$, we define
$$E(t):=\{\theta\in \mathbb{S}\;|\; R(\theta)\geq t\}.$$
Then
$$|E(t)|\to 0\quad\text{as $t\to+\iy$}.$$
\el

\begin{proof}
Let $S(t):=\{x\in \partial B_t \;|\; U(x)+V(x)\ge 2A\}$, where $t> 30$.
We claim that $|S(t)|\ge \frac{1}{C}|E(t)|t^{N-1}$ for some constant $C>0$ independent of $t$. 

For $t> 30$ large, we pick disjoint balls of radius $3$ that centered at $t\theta$ for some
$\theta\in E(t)$ one by one. Let $N$ be the number of the balls we picked. Then
$N\ge \frac{1}{C_1}|E(t)|t^{N-1}$ for some $C_1>0$ independent of $t$. Let $t\theta\in tE(t)$ be the center of such a ball, then Lemma \ref{lemma2-3} shows that $U+V\ge 2A$ on a geodesic ball in $\partial B_t$ of radius $\dd_0$ that belongs to $B_3(t\theta)$ (Note the center of this geodesic ball is $y_\theta+s_\theta\theta$ with $s_\theta>0$ satisfying $|y_\theta+s_\theta\theta|=t$, which implies $|s_\theta-t|\leq 1$ and so $|y_\theta+s_\theta\theta-t\theta|\leq 2$). Hence, there are constants $C, C_2>0$ independent of $t$ such that
$$|S(t)|\ge C_2\dd_0^{N-1}N\ge \frac{1}{C}|E(t)|t^{N-1}.$$
Consequently, by writing $\overline{U}(x)=\overline{U}(|x|)$ and $\overline{V}(x)=\overline{V}(|x|)$, we have
{\allowdisplaybreaks
\begin{align*}
\overline{U}(t)+\overline{V}(t)&=\frac{1}{|\mathbb{S}|}\int_{\mathbb{S}}(U+V)(t\theta)d\theta
=\frac{1}{t^{N-1}|\mathbb{S}|}\int_{\partial B_t}(U+V)(t\theta)d\sigma\\
&\ge \frac{2A}{t^{N-1}|\mathbb{S}|}|S(t)|\ge \frac{2A}{C|\mathbb S|}|E(t)|.
\end{align*}
}%
The proof is complete by using \eqref{eq2-10} that $\overline{U}(t)+\overline{V}(t)\to 0$ as $t\to+\iy$.
\end{proof}

\bl\label{lemma2-5} Define
$$A(t):=\mathbb S\setminus E(t)=\{\theta\in \mathbb{S}\;|\; R(\theta)< t\}.$$
Then $U(x)+V(x)\to 0$ uniformly as $|x|\to +\iy$, and there is $M\geq 30$ large such that $A(M)=\mathbb{S}$, namely $R(\theta)<M$ for all $\theta\in \mathbb S$.
\el

\begin{proof}
For any given $x, y\in \RN$ with $|x|=6t$, $|y|=2t$ and $t>30$ large, define
\begin{align*}
&D_t:=\{z\in\partial B_{4t}\;|\; z\neq x(\la, \theta)\,\,\text{for any $\theta\in A(t)$ and any $\la>0$}\},\\
&Q_t:=\{z\in\partial B_{4t}\;|\; z\neq y(\la, \theta)\,\,\text{for any $\theta\in A(t)$ and any $\la>0$}\},\end{align*}
\begin{align*}
G_t:=\partial B_{4t}\setminus(D_t\cup Q_t).
\end{align*}
Then for any $z\in D_t$ (resp. $z\in Q_t$), we have $\frac{x-z}{|x-z|}\in \mathbb S\setminus A(t)= E(t)$ (resp. $\frac{z-y}{|z-y|}\in E(t)$), which implies $$|D_t|\le C|E(t)|t^{N-1},\quad |Q_t|\le C|E(t)|t^{N-1}$$ for some constant $C>0$ independent of $t$. Thus, it follows from Lemma \ref{lemma2-4} that 
$$|G_t|\ge t^{N-1}(4^{N-1}|\mathbb{S}|-2C|E(t)|)>0$$
for $t$ large, which implies that $G_t\neq \emptyset$ for $t$ large. Therefore, there exists $z\in\partial B_{4t}$ such that $z=x(\la_1, \theta_1)$ and $z=y(\la_2, \theta_2)$, where $\theta_i\in A(t)$ and $\lambda_i>0$ for $i=1, 2$. Then $R(\theta_i)<t$ and so
$$\la_1=\frac{|x|^2-|z|^2}{2|x-z|}\ge t>R(\theta_1),\quad \la_2=\frac{|z|^2-|y|^2}{2|y-z|}\ge {t}>R(\theta_2),$$
which, together with \eqref{eq2-03}, implies
$$U(x)\le U(z)\le U(y),\quad V(x)\le V(z)\le V(y).$$
That is,
$$\max_{\partial B_{6t}}U\le \min_{\partial B_{2t}}U\le \overline{U}(2t)\to 0,\quad \max_{\partial B_{6t}}V\le \min_{\partial B_{2t}}V\le \overline{V}(2t)\to 0,\;\text{as}\;t\to+\infty,$$
namely $U(x)+V(x)\to 0$ uniformly as $|x|\to +\iy$. Take $M\geq 30$ large such that $U(x)+V(x)<2A$ for any $|x|\geq M$. Then by Lemma \ref{lemma2-3}, we see that $R(\theta)<M$ for any $\theta\in\mathbb S$, 
so $A(M)=\mathbb{S}$.
\end{proof}

We are ready to prove Theorem \ref{thm-as}.

\begin{proof}[Proof of Theorem \ref{thm-as}] Let $R>M$.
For any $|y|=R$ and $|x|=R+2M$, we have $y=x(\la, \theta)$ with $\theta=\frac{x-y}{|x-y|}$ and $$\la=\frac{|x|^2-|y|^2}{2|x-y|}\ge M>R(\theta).$$ Consequently, $U(x)\le U(y)$ and $V(x)\le V(y)$, namely
$$\max_{\partial B_{R+2M}}U\le \min_{\partial B_R}U,\quad \max_{\partial B_{R+2M}}V\le \min_{\partial B_R}V.$$
On the other hand, the maximum principle implies
$$u(x)\geq \min_{\partial B_{1/R}}u\quad\text{for }|x|\leq 1/R,$$
from which and $U(x)=|x|^{2-N}u(x/|x|^2)$, we obtain
$$U(x)\ge \left(\frac{R}{|x|}\right)^{N-2}\min_{\partial B_R}U\quad\text{for $|x|\ge R$}.$$
Therefore,
$$\min_{\partial B_{R+2M}}U\ge\left(\frac{R}{R+2M}\right)^{N-2}\min_{\partial B_R}U\ge
\left[1+O\left((R+2M)^{-1}\right)\right]\max_{\partial B_{R+2M}}U$$
for $R$ large, that is,
$$\min_{\partial B_R}U\ge (1+O(R^{-1})) \max_{\partial B_R}U\quad\text{for $R$ large},$$
or equivalently, for $|x|=r>0$ small, 
$$u(x)\ge\min_{\partial B_r}u\ge (1+O(r))\max_{\partial B_r}u\ge (1+O(r))\bar{u}(|x|),$$
and $$\bar{u}(|x|)\ge \min_{\partial B_r}u\ge (1+O(r))u(x).$$This proves
$$u(x)=(1+O(|x|))\bar{u}(|x|),\quad\forall\, |x|\,\,\,\text{small}. $$
The same statement holds for $v$. The proof is complete.\end{proof}

\section{Sharp estimates for singular solutions}
\label{sec-3}

This section is devoted to the proofs of Theorems \ref{th1}, \ref{th2} and \ref{th20}.
First we prove the following pointwise upper bound, which is sharp for singular solutions.

\bl\label{lemma2-6}Let $\dd:=\frac{N-2}{2}$. Then there exists $C>0$ and $R_0\in (0,1)$ such that
$$u(x)\le C|x|^{-\dd},\quad v(x)\le C|x|^{-\dd},\quad\forall |x|\leq R_0.$$\el

\begin{proof} We follow the approach of \cite[Theorem 1.2]{CGS} to prove this lemma. Let $\eta(x)=|x|^2$ and $0<r<2$. By Lemma \ref{lemma2-1}, we have
{\allowdisplaybreaks
\begin{align*}
\int_{B_r}(u\Delta \eta+\eta f(u, v))dx&=\int_{B_r}(u\Delta \eta-\eta \Delta u)dx=\int_{\partial B_r}(2ru-r^2 \frac{\partial u}{\partial\nu})d\sigma\\
&=2r\int_{\partial B_r}ud\sigma-r^2\int_{B_r}\Delta udx\\
&=2r\int_{\partial B_r}ud\sigma+r^2\int_{B_r}f(u,v)dx,
\end{align*}
}%
so
\begin{equation}\label{eq3-1}\int_{B_r}(r^2-|x|^2)f(u, v)dx=2N\int_{B_r}udx-2r\int_{\partial B_r}ud\sigma< 2N\int_{B_r}udx.\end{equation}
Note from \eqref{eq2-55} that
\begin{align}\label{eq02-8}\bar{u}'(r)=
\frac{-1}{|\partial B_r|}\int_{B_r}f(u, v)dx<0,
\end{align}
namely $\bar{u}(r)$ is decreasing with respect to $r>0$. Applying the H\"{o}lder inequality to $\bar{u}(\rho)=\frac{1}{|\mathbb{S}|}\int_{\mathbb{S}}u(\rho \theta)d\theta$, we get
$$\bar{u}(\rho)^{2^\ast-1}\le\frac{1}{|\mathbb{S}|}\int_{\mathbb{S}}u(\rho \theta)^{2^\ast-1}d\theta,$$
so
{\allowdisplaybreaks
\begin{align*}
\frac{1}{|\mathbb{S}|}\int_{B_r}(r^2-|x|^2)f(u, v)dx\ge&\frac{\mu_1}{|\mathbb{S}|}\int_{B_r}(r^2-|x|^2)u^{2^\ast-1}dx\\
=&\frac{\mu_1}{|\mathbb{S}|}\int_0^r(r^2-\rho^2)\rho^{N-1}
\int_{\mathbb{S}}u(\rho\theta)^{2^\ast-1}d\theta d\rho\\
\ge &\mu_1\int_0^r (r^2-\rho^2)\rho^{N-1}\bar{u}(\rho)^{2^\ast-1}d\rho\\
\ge&\mu_1\bar{u}(r)^{2^\ast-1}\int_0^r (r^2-\rho^2)\rho^{N-1}d\rho\\
=& \frac{2\mu_1}{N(N+2)}r^{N+2}\bar{u}(r)^{2^\ast-1}.
\end{align*}
}%
Inserting this estimate into \eqref{eq3-1} leads to
\begin{align*}
r^{N+2}\bar{u}(r)^{2^\ast-1}&\le C\int_{B_r}u dx=C\int_0^r\rho^{N-1}\int_{\mathbb{S}}u(\rho\theta)d\theta d\rho\\
&= C|\mathbb S|\int_0^r\rho^{N-1}\bar{u}(\rho)d\rho\le Cr^{\frac{4N}{N+2}}
\left(\int_0^r\rho^{N-1}\bar{u}(\rho)^{2^\ast-1}d\rho\right)^{\frac{1}{2^\ast-1}}.
\end{align*}
This implies
$$r^{N-1}\bar{u}(r)^{2^\ast-1}\le Cr^{\frac{N-6}{N+2}}
\left(\int_0^R\rho^{N-1}\bar{u}(\rho)^{2^\ast-1}d\rho\right)
^{\frac{1}{2^\ast-1}},\quad\forall\,0<r\le R,$$
and then
\begin{align*}
\int_0^R r^{N-1}\bar{u}(r)^{2^\ast-1}dr&\le C\left(\int_0^R\rho^{N-1}\bar{u}(\rho)^{2^\ast-1}d\rho\right)
^{\frac{1}{2^\ast-1}}\int_0^R r^{\frac{N-6}{N+2}}dr\\
&\le C\left(\int_0^R\rho^{N-1}\bar{u}(\rho)^{2^\ast-1}d\rho\right)
^{\frac{1}{2^\ast-1}} R^{\frac{2N-4}{N+2}}.
\end{align*}
That is,
$$\frac1N R^N\bar{u}(R)^{2^\ast-1}\le \int_0^R r^{N-1}\bar{u}(r)^{2^\ast-1}dr\le C R^{\frac{N-2}{2}},$$
so $\bar{u}(R)\le C R^{-\frac{N-2}{2}}$. Similarly,  $\bar{v}(R)\le C R^{-\frac{N-2}{2}}$. The proof is complete by using Theorem \ref{thm-as}.
\end{proof}

Now similarly as \cite{CGS}, we use Fowler's change of variables.
Let $T_0:=-\ln R_0>0$, $r=e^{-t}$ and
define 
\begin{equation}\label{eq3-omega}\omega_1(t, \theta):=e^{-\dd t}u(e^{-t}\theta),\quad \omega_2(t, \theta):=e^{-\dd t}v(e^{-t}\theta),\quad \forall t\geq T_0,\end{equation}
$$\bar{\omega}_1(t):=e^{-\dd t}\bar{u}(e^{-t}),\quad \bar{\omega}_2(t):=e^{-\dd t}\bar{v}(e^{-t}),\quad \forall t\geq T_0.$$
Clearly,
\begin{align*}
\bar{\omega}_1(t)=e^{-\dd t}\bar{u}(e^{-t})=\frac{e^{-\dd t}}{|\mathbb{S}|}\int_{\mathbb{S}}u(e^{-t}\theta)d \theta
=\frac{1}{|\mathbb{S}|}\int_{\mathbb{S}}\omega_1(t, \theta)d \theta,
\end{align*}
and similarly,
$$\bar{\omega}_2(t)=e^{-\dd t}\bar{v}(e^{-t})=\frac{1}{|\mathbb{S}|}\int_{\mathbb{S}}\omega_2(t, \theta)d \theta.$$
Then
$$\bar u'(r)=-r^{-\delta-1}(\bar\omega_1'(t)+\delta\bar\omega_1(t)),$$
which together with \eqref{eq02-8} implies
\begin{equation}\label{eq02-9}
\bar\omega_1'(t)+\delta\bar\omega_1(t)>0,\quad\text{and similarly,}\quad
\bar\omega_2'(t)+\delta\bar\omega_2(t)>0.
\end{equation}
Furthermore, Lemma \ref{lemma2-6} implies
\begin{equation}\label{eq3-2}
\omega_i(t,\theta)\leq C_2, \quad \bar\omega_i(t)\leq C_2, \quad \forall t\geq T_0,\;\;i=1,2.
\end{equation}
\bl\label{lemma2-7} For $i=1,2$ and $t\geq T_0$ large, we have
\begin{equation}\label{eq3-3-1}
|\omega_i(t,\theta)-\bar{\omega}_i(t)|=\bar{\omega}_i(t)O(e^{-t})=O(e^{-t}),
\end{equation}
\begin{equation}\label{eq3-3-2}
|\bar{\omega}_i'(t)|=O(\bar{\omega}_1(t)+\bar{\omega}_2(t))=O(1),
\end{equation}
\begin{align}\label{eq3-3-3}
\left|\frac{\partial}{\partial t}(\omega_i(t,\theta)-\bar{\omega}_i(t))\right|=(\bar{\omega}_1(t)+\bar{\omega}_2(t))O(e^{-t})=O(e^{-t}),\end{align}
\begin{align}\label{eq3-3-4}
\big|\nabla_{\theta}\omega_i(t,\theta)\big|=(\bar{\omega}_1(t)+\bar{\omega}_2(t))O(e^{-t})=O(e^{-t}).
\end{align}
\el

\begin{proof} Clearly \eqref{eq3-3-1} follows directly from Theorem \ref{thm-as} and \eqref{eq3-omega}-\eqref{eq3-2}.

We follow the approach of \cite[Lemma 7.1]{CGS} to prove \eqref{eq3-3-2}-\eqref{eq3-3-4}.
Note from Theorem \ref{thm-as} that for $|x|>0$ small, 
\begin{align*}-\Delta u(x)&=\mu_1 u^{2^\ast-1}(x)+\bb u^{\frac{2^\ast}{2}-1}v^{\frac{2^\ast}{2}}(x)\\
&=\left[\mu_1 \bar{u}^{2^\ast-1}(x)+\bb \bar{u}^{\frac{2^\ast}{2}-1}\bar{v}^{\frac{2^\ast}{2}}(x)\right](1+O(|x|)),
\end{align*}
\begin{align}\label{eq3-16}-\Delta \bar{u}(x)&=\overline{\mu_1 u^{2^\ast-1}(x)+\bb u^{\frac{2^\ast}{2}-1}v^{\frac{2^\ast}{2}}(x)}\\
&=\left[\mu_1 \bar{u}^{2^\ast-1}(x)+\bb \bar{u}^{\frac{2^\ast}{2}-1}\bar{v}^{\frac{2^\ast}{2}}(x)\right](1+O(|x|)),\nonumber
\end{align}
which implies
$$-\Delta(u-\bar{u})(x)=\left[\mu_1 \bar{u}^{2^\ast-1}(x)+\bb \bar{u}^{\frac{2^\ast}{2}-1}\bar{v}^{\frac{2^\ast}{2}}(x)\right]O(|x|).$$
Then by the gradient estimate (cf. \cite[Corollary 2.2.7]{Jost}), $u(x)-\bar{u}(|x|)=\bar{u}(|x|)O(|x|)$ and Lemma \ref{lemma2-6}, there are constants $C>0$ independent of $r$ such that for $r>0$ small,
{\allowdisplaybreaks
\begin{align}\label{eq3-7}
&\quad|\nabla(u-\bar{u})(x)|\big|_{|x|=r}\nonumber\\
&\le C\left[\frac{1}{r}\sup_{\frac{2r}{3}\le|x|\le\frac{3r}{2}}|u-\bar{u}|
+r\sup_{\frac{2r}{3}\le|x|\le\frac{3r}{2}}(\mu_1 \bar{u}^{2^\ast-1}+\bb \bar{u}^{\frac{2^\ast}{2}-1}\bar{v}^{\frac{2^\ast}{2}})(r)O(r)\right]\nonumber\\
&\le C\sup_{\frac{2r}{3}\le|x|\le\frac{3r}{2}}\bar{u}+C r^2\sup_{\frac{2r}{3}\le|x|\le\frac{3r}{2}}(\bar{u}+\bar{v})^{2^\ast-1}\nonumber\\
&\le C\sup_{\frac{2r}{3}\le|x|\le\frac{3r}{2}} (\bar{u}+\bar{v}),
\end{align}
}%
and similarly, 
\be\label{eq03-9}|\bar{u}'(r)|\leq \frac{C}{r}\sup_{\frac{2r}{3}\le|x|\le\frac{3r}{2}} (\bar{u}+\bar{v}).\ee

On the other hand, for $|x|\in (\frac12, 2)$ and $r>0$ small, we let
$$
U_r(x):=r^{\frac{N-2}{2}}u(rx)\leq C|x|^{\frac{2-N}{2}},\quad V_r(x):=r^{\frac{N-2}{2}}v(rx)\leq C|x|^{\frac{2-N}{2}}.
$$
Then
$$
\begin{cases}
-\Delta U_r(x)=\mu_1 U_r^{2^\ast-1}(x)+\bb U_r^{\frac{2^\ast}{2}-1}V_r^{\frac{2^\ast}{2}}(x),\\
-\Delta V_r(x)=\mu_2 V_r^{2^\ast-1}(x)+\bb V_r^{\frac{2^\ast}{2}-1}U_r^{\frac{2^\ast}{2}}(x).
\end{cases}
$$
Thus, we can write
$-\Delta(U_r+V_r)(x)=C(x)(U_r+V_r)(x)$, where
\begin{align*}
C(x)\le C\left(U_r+V_r\right)^{2^\ast-2}(x)\le C|x|^{-2}\le 4C,
\end{align*}
where we have used $\frac12<|x|<2$. Therefore, the Harnack inequality gives
$$\sup_{\frac{2}{3}\le|x|\le\frac{3}{2}}(U_r+V_r)\le C\inf_{\frac{2}{3}\le|x|\le\frac{3}{2}}(U_r+V_r),$$
where $C>0$ is independent of $r$. Consequently,
$$\sup_{\frac{2r}{3}\le|x|\le\frac{3r}{2}}(u+v)\le C\inf_{\frac{2r}{3}\le|x|\le\frac{3r}{2}}(u+v),\quad\forall\,r>0\;\text{small}.$$
Inserting this into \eqref{eq3-7}-\eqref{eq03-9}, we get that for $r>0$ small,
$$|\nabla(u-\bar{u})|\big|_{|x|=r}\le C\inf_{\frac{2r}{3}\le|x|\le\frac{3r}{2}}(u+v)\le
C(\bar{u}+\bar{v})(r),$$
$$|\bar u'(r)|\leq \frac{C}{r}(\bar{u}+\bar{v})(r).$$
Thus, for $r>0$ small,
$$\left|\frac{\partial}{\partial r}(u-\bar{u})(r\theta)\right|\le C(\bar{u}+\bar{v})(r),
\quad \big|\nabla_\theta u(r\theta)\big|\le Cr(\bar{u}+\bar{v})(r), $$
and consequently, 
$$|\bar\omega_1'(t)|\leq\delta r^{\delta}\bar u(r)+r^{\frac{N}2}|\bar u'(r)|\leq Cr^{\delta}(\bar{u}+\bar{v})(r)= C(\bar{\omega}_1(t)+\bar{\omega}_2(t)),$$
\begin{align*}
\left|\frac{\partial}{\partial t}(\omega_1(t,\theta)-\bar{\omega}_1(t))\right|
&\le\dd r^{\dd}|u-\bar{u}|+r^{\frac{N}{2}}\left|\frac{\partial}{\partial r}(u-\bar{u})(r\theta)\right|\\
&\le Cr^{\frac{N}{2}}(\bar{u}+\bar{v})(r)=C(\bar{\omega}_1(t)+\bar{\omega}_2(t))e^{-t},
\end{align*}
$$|\nabla_\theta \omega_1(t,\theta)|= e^{-\dd t}|\nabla_\theta u(r\theta)|\le C(\bar{\omega}_1(t)+\bar{\omega}_2(t))e^{-t}.$$
This proves \eqref{eq3-3-2}-\eqref{eq3-3-4} for $i=1$. The case $i=2$ can be proved similarly.
This completes the proof.
\end{proof}

It follows from \eqref{eq0} that $(\omega_1, \omega_2)$ satisfies the following system
\begin{equation}\label{eq3-10}
\begin{cases}
(\omega_1)_{tt}+\Delta_\theta \omega_1-\dd^2\omega_1+\mu_1\omega_1^{2^\ast-1}+\bb \omega_1^{\frac{2^\ast}{2}-1}\omega_2^{\frac{2^\ast}{2}}=0,\\
(\omega_2)_{tt}+\Delta_\theta \omega_2-\dd^2\omega_2+\mu_2\omega_2^{2^\ast-1}+\bb \omega_2^{\frac{2^\ast}{2}-1}\omega_1^{\frac{2^\ast}{2}}=0.
\end{cases}
\end{equation}
Multiplying the first equation by $(\omega_1)_t$, the second equation by $(\omega_2)_t$, and integrating over $\mathbb{S}\times [t, s]$, we easily obtain
\begin{align}\label{eq3-11}\int_{\mathbb{S}}&\Big[\frac{1}{2}((\omega_1)_t^2
+(\omega_2)_t^2-\dd^2\omega_1^2-\dd^2\omega_2^2)
+\frac{1}{2^\ast}(\mu_1\omega_1^{2^\ast}\\
&+2\bb \omega_1^\frac{2^\ast}{2}\omega_2^\frac{2^\ast}{2}+\mu_2\omega_2^{2^\ast})
-|\nabla_\theta \omega_1|^2-|\nabla_\theta \omega_2|^2\Big]\bigg|_t^s=0.\nonumber\end{align}
Define
\begin{align}\label{eq3-12}D(t):=&\frac{1}{2}((\bar{\omega}_1')^2
+(\bar{\omega}_2')^2-\dd^2\bar{\omega}_1^2-\dd^2\bar{\omega}_2^2)(t)\\
&+\frac{1}{2^\ast}(\mu_1\bar{\omega}_1^{2^\ast}
+2\bb \bar{\omega}_1^\frac{2^\ast}{2}\bar{\omega}_2^\frac{2^\ast}{2}
+\mu_2\bar{\omega}_2^{2^\ast})(t),\quad t\geq T_0,\nonumber\end{align}
Then by inserting \eqref{eq3-2}-\eqref{eq3-3-4} into \eqref{eq3-11}, we obtain 
\begin{align}\label{eq3-13} D(s)-D(t)
=(\bar{\omega}_1^2+\bar{\omega}_2^2)O(e^{-\tau})\Big|_t^s\quad\text{for $s>t>T_0$ large},\end{align}
namely $D(s)-D(t)\to 0$ as $s>t\to +\iy$. Therefore,
\begin{align}\label{eq3-14}D_\iy:=\lim_{t\to+\iy}D(t)\in\mathbb R.\end{align}
Then letting $s\to+\infty$ in \eqref{eq3-13} gives
\begin{align}\label{eq3-15}D(t)=D_\iy+(\bar{\omega}_1(t)^2+\bar{\omega}_2(t)^2)O(e^{-t})\quad\text{for $t>T_0$ large}.\end{align}

\begin{lemma}\label{lemma03-3}
Recalling the constant $D(u,v)$ defined in \eqref{eq6}-\eqref{eq7}, we have $$D(u, v)=|\mathbb S|D_\iy\leq 0.$$
Furthermore,  
\be\label{eq3-22}D_\iy<0\quad\Longleftrightarrow\quad\liminf_{t\to+\infty}(\bar{\omega}_1(t)+\bar{\omega}_2(t))>0.\ee
In this case, there is $C>0$ such that
\be\label{eq3-23}u(x)+v(x)\geq C|x|^{-\delta},\quad\text{for $|x|>0$ small},\ee
namely at least one of $\{u(x), v(x)\}$ is singular at $x=0$.
\end{lemma}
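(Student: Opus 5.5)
Three things have to be shown: the identity $D(u,v)=|\mathbb S|D_\iy$, the sign $D_\iy\le 0$, and the characterization \eqref{eq3-22} together with \eqref{eq3-23}. Everything is done in the Fowler variables \eqref{eq3-omega}.

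\textbf{Step 1: the identity.} Take $r=e^{-t}$. On $\partial B_r$ we have $u=r^{-\dd}\omega_1$ and $\partial_\nu u=\partial_r u=-r^{-\dd-1}\big((\omega_1)_t+\dd\omega_1\big)$, while the tangential gradient is $r^{-\dd-1}\nabla_\theta\omega_1$; similarly for $v$. Also $d\sigma=r^{N-1}d\theta$. Substituting these into \eqref{eq7}, every power of $r$ cancels because $2\dd+2=N$ and $2^\ast\dd=N$. The integrand then becomes an expression in $\omega_i$, $(\omega_i)_t$ and $\nabla_\theta\omega_i$ integrated over $\mathbb S$, namely the integrand appearing in \eqref{eq3-11} (up to the convention on the tangential terms). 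Next we replace $\omega_i$ by $\bar\omega_i$ and $(\omega_i)_t$ by $\bar\omega_i'$, and drop the $\nabla_\theta$ terms. By \eqref{eq3-3-1}--\eqref{eq3-3-4} and the bounds \eqref{eq3-2}, this costs only $O(e^{-t})$. The result is $D(u,v)=D(e^{-t};u,v)=|\mathbb S|D(t)+O(e^{-t})$. Letting $t\to\iy$ and using \eqref{eq3-14} gives $D(u,v)=|\mathbb S|D_\iy$.

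\textbf{Step 2: $D_\iy\le0$.} Argue by contradiction and suppose $D_\iy>0$. By \eqref{eq3-2}, $\bar\omega_i$ and $\bar\omega_i'$ are bounded. Integrate the ODE for $\bar\omega_i$ (obtained by averaging \eqref{eq3-10}; see \eqref{eq3-017}) over a window $[t,t+L]$, or directly the identity $\big(\tfrac12(\bar\omega_i^2)\big)''=(\bar\omega_i')^2+\bar\omega_i\bar\omega_i''$. This yields
\[
\frac12(\bar\omega_1^2+\bar\omega_2^2)''=\sum_i(\bar\omega_i')^2-\dd^2\sum_i\bar\omega_i^2+F+O(e^{-t}),
\]
where $F=O\big(\sum_i\bar\omega_i^{2^\ast}+\bar\omega_1^{2^\ast/2}\bar\omega_2^{2^\ast/2}\big)$. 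The two cases are handled differently.

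\emph{Case $\liminf(\bar\omega_1+\bar\omega_2)=0$.} Pick $t_k\to\iy$ with $\bar\omega_1(t_k)+\bar\omega_2(t_k)\to0$. By \eqref{eq3-3-2}, $|\bar\omega_i'|\le C(\bar\omega_1+\bar\omega_2)$, so the derivatives vanish along $t_k$ as well. Then \eqref{eq3-12} gives $D(t_k)\to0$, hence $D_\iy=0$. This already excludes $D_\iy>0$ in this case, and it proves the direction ``$D_\iy<0\Rightarrow\liminf>0$'' of \eqref{eq3-22}.

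\emph{Case $\liminf>0$.} Then the $\bar\omega_i$ are bounded above and below on $[T,\iy)$. To rule out $D_\iy>0$, I would use the monotonicity \eqref{eq02-9} in the form $\bar\omega_i'>-\dd\bar\omega_i$. If $D_\iy>0$, then $\sum(\bar\omega_i')^2\ge 2D_\iy+\dd^2\sum\bar\omega_i^2-\tfrac{2}{2^\ast}(\cdots)$. The aim is to show that $\sum_i\bar\omega_i'$ cannot change sign infinitely often with a lower bound on $|\bar\omega'|$ at the critical points. At a time where $\frac{d}{dt}\sum\bar\omega_i^2=0$, one would like $\sum(\bar\omega_i')^2$ to be small, but this is not automatic for a system, and this is where I expect the difficulty. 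The cleaner route is the following: since $\bar\omega_i'+\dd\bar\omega_i>0$ and $\bar\omega_i$ is bounded, eventually some $\bar\omega_i'$ keeps a definite sign on long intervals. Indeed $D_\iy>0$ forces $|\bar\omega'|^2\ge c>0$ whenever $\bar\omega$ is small, which makes $\bar\omega$ grow and contradicts the bound in \eqref{eq3-2}.

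\textbf{The harder direction of \eqref{eq3-22}.} Suppose $D_\iy=0$; we must show $\liminf(\bar\omega_1+\bar\omega_2)=0$. My plan is a comparison with $u$ near the origin. If the liminf were positive, then $\bar u(r)\ge cr^{-\dd}$. Feeding this into \eqref{eq02-8} and into the Pohozaev identity at scale $r$ with the original variables should produce a negative contribution to $D$. This is the main obstacle. I would first try to mimic \cite{CGS}: assume $\liminf>0$ and $D_\iy=0$, note that $(\bar\omega_1,\bar\omega_2)$ then stays in a compact set away from $0$, and pass to limits of translates $\bar\omega_i(\cdot+t_k)$. By \eqref{eq3-3-1}--\eqref{eq3-3-4} these converge to an entire solution of \eqref{eq30-017} with zero energy, bounded below by a positive constant and with $\bar\omega_i'+\dd\bar\omega_i\ge0$. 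Via \cite{ChenLin1}, zero-energy solutions of \eqref{eq30-017} are homoclinic to $0$, that is, they correspond to entire regular solutions, and these tend to $0$ as $t\to\pm\iy$. This contradicts the positive lower bound and closes the argument.

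\textbf{Step 3: \eqref{eq3-23}.} Once $\liminf(\bar\omega_1+\bar\omega_2)>0$ is known, there is $c>0$ with $\bar\omega_1+\bar\omega_2\ge c$ for $t$ large. Combining this with \eqref{eq3-3-1} gives $\omega_1+\omega_2\ge c/2$ for $t$ large. Unwinding \eqref{eq3-omega}, this is $u(x)+v(x)\ge \tfrac c2|x|^{-\dd}$ for $|x|$ small, which is \eqref{eq3-23}.
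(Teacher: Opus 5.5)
Your Steps 1 and 3 agree with the paper. Your handling of the case $\liminf_{t\to\infty}(\bar\omega_1+\bar\omega_2)=0$ is also correct: you use \eqref{eq3-3-2} to get $\bar\omega_i'(t_k)\to0$ and hence $D(t_k)\to0$, which is slightly more direct than the paper's passage to the zero limit profile.

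\textbf{The gap.} It lies in the case $\liminf_{t\to\infty}(\bar\omega_1+\bar\omega_2)>0$, where you never actually exclude $D_\infty>0$. The mechanism you offer does not work there, for three reasons.
\begin{enumerate}
\item The claim that $D_\infty>0$ forces $|\bar\omega'|^2\ge c$ whenever $\bar\omega$ is small is vacuous in this case, because $\bar\omega$ stays bounded away from $0$.
\item The claim that some $\bar\omega_i'$ keeps a definite sign on long intervals is not justified.
\item A lower bound on $|\bar\omega'|$ on short stretches does not contradict \eqref{eq3-2}, which is only an upper bound.
\end{enumerate}
Your later argument for the converse only rules out $D_\infty=0$. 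So the configuration $\liminf>0$, $D_\infty>0$ survives. With it, both $D(u,v)\le 0$ and the implication ``$\liminf>0\Rightarrow D_\infty<0$'' in \eqref{eq3-22} remain unproved. As you noticed yourself, the scalar trick of evaluating the energy at a critical point fails for the system. The tangential part of $(\bar\omega_1',\bar\omega_2')$ need not vanish where $\frac{d}{dt}(\bar\omega_1^2+\bar\omega_2^2)=0$. The sign of the energy of a nonnegative entire solution of \eqref{eq3-9} is a genuinely nontrivial fact, not something a soft monotonicity argument delivers.

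\textbf{The fix.} The missing input is exactly the one the paper uses, and one you already invoke later.
\begin{enumerate}
\item Take $t_n\to\infty$ and pass to a $C^2_{\mathrm{loc}}$ limit $(v_1,v_2)$ of $\bar\omega_i(\cdot+t_n)$.
\item This limit solves the autonomous system \eqref{eq3-9} on $\mathbb R$. By \eqref{eq3-15}, its conserved energy $K$ equals $D_\infty$.
\item \cite[Theorem 1.1]{ChenLin1} gives $K\le 0$ for every such limit, and $K<0$ when $v_1+v_2\ge c>0$.
\end{enumerate}
In the case $\liminf>0$, the lower bound $v_1+v_2\ge c$ passes to the limit, so this yields $D_\infty<0$ directly. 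That settles $D_\infty\le0$ and the reverse implication in \eqref{eq3-22} at one stroke. It also makes your separate contradiction argument for the case $D_\infty=0$ unnecessary.
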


\begin{proof}
By \eqref{eq6}-\eqref{eq7}, \eqref{eq3-11}-\eqref{eq3-13} and $r=e^{-t}$, we have
{\allowdisplaybreaks
\begin{align*}
D(u,v)\equiv D(r; u,v)=&\int_{\mathbb{S}}\Big[\frac{1}{2}((\omega_1)_t^2
+(\omega_2)_t^2-\dd^2\omega_1^2-\dd^2\omega_2^2)
+\frac{1}{2^\ast}(\mu_1\omega_1^{2^\ast}\\
&+2\bb \omega_1^\frac{2^\ast}{2}\omega_2^\frac{2^\ast}{2}+\mu_2\omega_2^{2^\ast})
-|\nabla_\theta \omega_1|^2-|\nabla_\theta \omega_2|^2\Big]d\theta\\
=&|\mathbb S|D(t)+(\bar{\omega}_1^2(t)+\bar{\omega}_2^2(t))O(e^{-t}).
\end{align*}}
Letting $t\to+\infty$, we immediately get $D(u, v)=|\mathbb S|D_\iy$.

It suffices to prove $D_\infty\leq 0$.
Note from \eqref{eq3-16} that for $|x|>0$ small,
$$
\begin{cases}
-\Delta \bar{u}(x)=\left[\mu_1 \bar{u}^{2^\ast-1}(x)+\bb \bar{u}^{\frac{2^\ast}{2}-1}\bar{v}^{\frac{2^\ast}{2}}(x)\right](1+O(|x|)),\\
-\Delta \bar{v}(x)=\left[\mu_2 \bar{v}^{2^\ast-1}(x)+\bb \bar{v}^{\frac{2^\ast}{2}-1}\bar{u}^{\frac{2^\ast}{2}}(x)\right](1+O(|x|)),
\end{cases}
$$
so for $t>T_0$ large,
\begin{equation}\label{eq3-17}
\begin{cases}
\bar{\omega}_1''-\dd^2\bar{\omega}_1+\left(\mu_1\bar{\omega}_1^{2^\ast-1}+\bb \bar{\omega}_1^{\frac{2^\ast}{2}-1}\bar{\omega}_2^{\frac{2^\ast}{2}}\right)
(1+O(e^{-t}))=0,\\
\bar{\omega}_2''-\dd^2\bar{\omega}_2+\left(\mu_2\bar{\omega}_2^{2^\ast-1}+\bb \bar{\omega}_2^{\frac{2^\ast}{2}-1}\bar{\omega}_1^{\frac{2^\ast}{2}}\right)
(1+O(e^{-t}))=0.
\end{cases}
\end{equation}
Take a sequence $t_n\uparrow+\iy$. By \eqref{eq3-2} and standard elliptic estimates, 
up to a subsequence,
$\bar\omega_1(\cdot+t_n)\to v_1\ge 0$ and $\bar\omega_2(\cdot+t_n)\to v_2\ge 0$ uniformly in $C^2_{loc}(\R)$, and
\be\label{eq3-9}
\begin{cases}
\begin{split}v_{1}''-\dd^2v_{1}+\mu_1v_1^{2^\ast-1}+\bb v_1^{\frac{2^\ast}{2}-1}v_2^{\frac{2^\ast}{2}}=0\\
v_{2}''-\dd^2v_{2}+\mu_2v_2^{2^\ast-1}+\bb v_2^{\frac{2^\ast}{2}-1}v_1^{\frac{2^\ast}{2}}=0\\
\end{split}  \quad   t\in\R.\end{cases}\ee
Clearly
\begin{align}\label{eq003-2}
K(t):=&\frac{1}{2}(|v_1'|^2+|v_2'|^2-\dd^2v_1^2-\dd^2v_2^2)(t)\nonumber\\
&+\frac{1}{2^\ast}(\mu_1v_1^{2^\ast}+2\bb v_1^{\frac{2^\ast}{2}}v_2^{\frac{2^\ast}{2}}+\mu_2v_2^{2^\ast})(t)\equiv K
\end{align}
is a contant independent of $t$. As mentioned in Section 1, 
the system \eqref{eq3-9} was already studied in \cite{ChenLin1}, and it was proved in \cite[Theorem 1.1]{ChenLin1} that $K\leq 0$. Then
$$D_\infty=\lim_{t_n\to+\infty}D(t_n)=K(0)=K\leq 0.$$

Finally, we prove \eqref{eq3-22}. First, suppose $D_\infty<0$. Assume by contradiction that there exists $t_n\uparrow+\iy$ such that $\bar\omega_1(t_n)+\bar\omega_2(t_n)\to 0$. Then up to a subsequence,
$\bar\omega_1(\cdot+t_n)\to v_1\ge 0$ and $\bar\omega_2(\cdot+t_n)\to v_2\ge 0$ uniformly in $C^2_{loc}(\R)$, where $(v_1, v_2)$ is a solution of \eqref{eq3-9} satisfying $v_1(0)=v_2(0)=0$. Consequently, $v_1=v_2\equiv 0$ and so $D_\iy=K=0$, a contradiction. Thus,
\be\label{eq3-25}\liminf_{t\to+\infty}(\bar{\omega}_1(t)+\bar{\omega}_2(t))>0,\ee
or equivalently, there is a constant $C_1>0$ such that
\be\label{eq3-26}\bar{\omega}_1(t)+\bar{\omega}_2(t)\geq C_1,\quad\text{for $t>T_0$ large},\ee
which implies
$$\bar u(x)+\bar v(x)\geq C_1|x|^{-\delta},\quad\text{for $|x|>0$ small}.$$
Applying Theorem \ref{thm-as}, we obtain \eqref{eq3-23}.

Conversely, suppose \eqref{eq3-25} or equivalently, \eqref{eq3-26} holds. Again take a sequence $t_n\uparrow+\iy$.
Up to a subsequence,
$\bar\omega_1(\cdot+t_n)\to v_1\ge 0$ and $\bar\omega_2(\cdot+t_n)\to v_2\ge 0$ uniformly in $C^2_{loc}(\R)$, where 
$(v_1, v_2)$ is a solution of \eqref{eq3-9} satisfying $v_1(t)+v_2(t)\geq C_1>0$ for all $t$. Then it follows from \cite[Theorem 1.1]{ChenLin1} that $K<0$ and so $D_\infty=K<0$.  The proof is complete.
\end{proof}

\begin{proof}[Proof of Theorem \ref{th1}]
Clearly Theorem \ref{th1} follows directly from Lemmas \ref{lemma2-6}, \ref{lemma03-3} and Theorem \ref{thm-as}.
\end{proof}

Now we turn to the proof of Theorem \ref{th2}.
First, note from \eqref{eq3-22} that
\begin{align}\label{eq3-22-1}D_\iy=0\quad&\Longleftrightarrow\quad\liminf_{t\to+\infty}(\bar{\omega}_1(t)+\bar{\omega}_2(t))=0\\
&\Longrightarrow\quad\liminf_{t\to+\infty}\bar{\omega}_1(t)=\liminf_{t\to+\infty}\bar{\omega}_2(t)=0.\nonumber\end{align}

\begin{lemma}\label{lemma3-1}
The singularity $0$ is removable for both $u$ and $v$ if and only if
\be\label{eq-3-31}
        W(t):=\bar\omega_1(t)+\bar\omega_2(t)\to 0,
        \quad \text{as }\; t\to+\infty .
\ee
In this case, it follows from \eqref{eq3-22-1} that $D_\iy=0$.
\end{lemma}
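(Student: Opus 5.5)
The forward direction is immediate. If $u,v\in C^2(B_2)$, then $\bar u,\bar v$ are bounded near $0$, so
\[
W(t)=e^{-\dd t}(\bar u+\bar v)(e^{-t})\le Ce^{-\dd t}\to0 .
\]
The content is the converse: assuming \eqref{eq-3-31}, we show that $u$ and $v$ are bounded near $0$. A removable-singularity theorem for the distributional equations of Lemma \ref{lemma2-1} then upgrades boundedness to $u,v\in C^2(B_2)$. Indeed, once $f(u,v),g(u,v)\in L^\infty(B_1)$, the representation $u=u_1+u_2$ from that proof and standard elliptic regularity give $C^{1,\alpha}$ and then $C^2$ by bootstrap.

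To get boundedness, we work with the ODE \eqref{eq3-17} for $\bar\omega_1,\bar\omega_2$. Since $W\to0$, the nonlinear terms are small relative to the linear part: for $t\ge T_1$ large,
\[
\mu_1\bar\omega_1^{2^\ast-1}+\bb\bar\omega_1^{\frac{2^\ast}{2}-1}\bar\omega_2^{\frac{2^\ast}{2}}\le C\,W^{2^\ast-2}\,W=o(1)\,W .
\]
This uses $\bar\omega_1^{\frac{2^\ast}{2}-1}\bar\omega_2^{\frac{2^\ast}{2}}\le W^{2^\ast-1}$, which holds because both factors are $\le W$ and the exponents $\frac{2^\ast}{2}-1$ and $\frac{2^\ast}{2}$ are nonnegative. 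Summing the two equations, $W$ satisfies
\[
W''=(\dd^2-\e(t))W,\qquad 0\le \e(t)\to0 .
\]
The plan is an exponential decay bootstrap.

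\textbf{Step 1 (decay at rate just below $\dd$).} For any $\eta>0$, compare $W$ on $[T,S]$ with $Ae^{-(\dd-\eta)(t-T)}+Be^{(\dd-\eta)(t-S)}$. This is a supersolution of $\phi''-(\dd^2-\e)\phi\le0$ once $\e<2\dd\eta$. Letting $S\to\infty$ and using that $W$ is bounded gives $W(t)\le Ce^{-(\dd-\eta)t}$.

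\textbf{Step 2 (sharpen to rate exactly $\dd$).} With this decay, $\e(t)\le Ce^{-(2^\ast-2)(\dd-\eta)t}$ is integrable. We rewrite $W''-\dd^2W=-\e W$ by variation of constants:
\[
W(t)=ae^{\dd t}+be^{-\dd t}+\frac{1}{2\dd}\int\big(e^{\dd(t-s)}-e^{-\dd(t-s)}\big)\e(s)W(s)\,ds .
\]
Decay forces the growing mode to vanish, i.e. $a$ is determined by the integral tail. This yields $W(t)\le Ce^{-\dd t}$.

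Translating back, $\bar u(r)+\bar v(r)=r^{-\dd}W(-\ln r)\le C$. Theorem \ref{thm-as} then gives $u,v\le C$ near $0$, which completes the converse.

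The main obstacle is that the forcing is not exactly linear and the $O(e^{-t})$ errors perturb the equation, so the bootstrap must be done with inequalities rather than exact formulas. An alternative route to Step 2 avoids the ODE: use \eqref{eq02-8} directly, $|\bar u'(r)|\le r^{1-N}\int_{B_r}f$. From Step 1 with $\eta$ small,
\[
f\le C r^{-(\dd+\eta)(2^\ast-1)}=Cr^{-\frac{N+2}{2}-\eta'} .
\]
Hence $\int_{B_r}f\le Cr^{N-\frac{N+2}{2}-\eta'}$, which gives $|\bar u'|\le Cr^{-\frac{N}{2}+\frac{N-2}{2}\cdot 0-\eta'}\ldots$. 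Iterating this bound improves the exponent each round until $\bar u'$ is integrable at $0$, giving boundedness of $\bar u$; I would fall back to this iteration if the ODE comparison is messy. Finally, $D_\iy=0$ follows from \eqref{eq3-22-1}, since $\liminf W=0$.
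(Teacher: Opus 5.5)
Your proof is correct. The forward direction, the final appeal to Theorem~\ref{thm-as} and elliptic regularity, and the deduction of $D_\infty=0$ match the paper, but you prove the converse by a different route.

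\textbf{The paper's route.} It exploits the one-sided structure of \eqref{eq-W}. Since $W\to0$, one has $W''\ge \dd^2W-C_2W^{2^\ast-1}>0$, so $W$ is eventually decreasing. Setting $y=W'+\dd W$ (bounded by \eqref{eq02-9} and \eqref{eq3-3-2}), it integrates $(e^{-\dd t}y)'\ge -C_2e^{-\dd t}W^{2^\ast-1}$ from $t$ to $\infty$, using monotonicity to pull out $W(t)^{2^\ast-1}$. This gives the Bernoulli-type inequality $W'+\dd W<\frac{C_2}{\dd}W^{2^\ast-1}$. Integrating that through $(e^{\dd t}W)^{2-2^\ast}$ bounds $e^{\dd t}W$ directly at the exact rate.

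\textbf{Your route.} You treat $W''=(\dd^2-\e(t))W$ as a linear equation with a vanishing potential. A barrier and the maximum principle give the almost-sharp rate $e^{-(\dd-\eta)t}$. Duhamel's formula, with the growing mode removed, then upgrades this to $e^{-\dd t}$. This needs only $|\e|\le CW^{2^\ast-2}$, with no monotonicity of $W$ and no sign information on $y$, so it is more robust. The paper's argument is shorter and reaches the exact rate in one pass.

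To make Step 2 airtight, use the Step 1 rate rather than mere integrability of $\e$. Since $\e W\le CW^{2^\ast-1}\le Ce^{-(2^\ast-1)(\dd-\eta)t}$ and $(2^\ast-1)(\dd-\eta)>\dd$ for small $\eta$, both $\int_T^t e^{\dd s}\e W\,ds$ and $e^{2\dd t}\int_t^\infty e^{-\dd s}\e W\,ds$ stay bounded. Hence $e^{\dd t}W(t)$ is bounded.

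\textbf{Minor slips.}
\begin{enumerate}
\item The barrier requires $\e\le 2\dd\eta-\eta^2$, not $\e<2\dd\eta$.
\item The Duhamel term should carry a minus sign; this does not affect the bound.
\item Your fallback is garbled as written. Step 1 gives $\bar u+\bar v\le Cr^{-\eta}$, not a bound with exponent $-(\dd+\eta)$. The exponent you used is no better than Lemma~\ref{lemma2-6}, and iterating it gives no improvement, since the scaling is critical. With the correct exponent, Theorem~\ref{thm-as} gives $f\le C|x|^{-\eta(2^\ast-1)}$, and \eqref{eq02-8} then yields $|\bar u'(r)|\le Cr^{1-\eta(2^\ast-1)}$, which is integrable at $0$ for small $\eta$. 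That closes the argument in one step and makes your Step 2 unnecessary.
\end{enumerate}
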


\begin{proof}
First we prove the necessary part. Suppose $u(x)\leq C$ and $v(x)\leq C$ for $|x|>0$ small, then so do $\bar u(x)$ and $\bar v(x)$, which implies \eqref{eq-3-31} by using $\bar{\omega}_1(t)=e^{-\dd t}\bar{u}(e^{-t})$ and $\bar{\omega}_2(t)=e^{-\dd t}\bar{v}(e^{-t})$.

Conversely, we suppose that \eqref{eq-3-31} holds.
Note from \eqref{eq3-2}  and \eqref{eq3-17} that
\begin{equation}\label{eq-W}-C_1W^{2^\ast-1}\ge W''-\dd^2W\ge-C_2W^{2^\ast-1},\quad\text{for $t> T_0$ large}.\end{equation}
Then by \eqref{eq-3-31}, we have $W''(t)>0$ for $t>T_0$ large and so $W'(t)\uparrow 0$ as $t\to +\iy$. That is, $W'(t)<0$ for $t>T_0$ large and $W(t)\downarrow 0$ as $t\to+\iy$.

Define
$y(t):=W'(t)+\dd W(t),$ then 
$$y'(t)-\dd y(t)=W(t)''-\dd^2 W(t)\ge -C_2 W(t)^{2^\ast-1},\quad\text{for $t> T_0$ large},$$
and so
$$\frac{d}{dt}(e^{-\delta t}y(t))\ge -C_2e^{-\dd t}W(t)^{2^\ast-1},\quad\text{for $t> T_0$ large}.$$
Since it follows from \eqref{eq02-9}-\eqref{eq3-2} and \eqref{eq3-3-2} that $y(t)>0$ is bounded for $t>T_0$ large, we have that for $t>T_0$ large,
\begin{align*}
0-e^{-\delta t}y(t)&\geq -C_2\int_{t}^{+\infty}e^{-\dd s}W(s)^{2^\ast-1}ds\\
&>-C_2W(t)^{2^\ast-1}\int_{t}^{+\infty}e^{-\dd s}ds=-\frac{C_2}{\dd}e^{-\dd t}W(t)^{2^\ast-1},
\end{align*}
so
$$W'(t)+\dd W(t)=y(t)<\frac{C_2}{\dd}W(t)^{2^\ast-1}.$$
This implies the existence of $T_1>T_0$ large such that $(e^{\dd t}W(t))^{2-2^\ast}-\frac{C_2}{\dd^2}e^{(2-2^\ast)\dd t}$ is strictly increasing for $t>T_1$. Assume by contradiction that $e^{\dd t_n}W(t_n)\to+\infty$ for a sequence $t_n\uparrow +\infty$, then we obtain
$$(e^{\dd t}W(t))^{2-2^\ast}-\frac{C_2}{\dd^2}e^{(2-2^\ast)\dd t}<0,\quad\text{for }\;t>T_1,$$ 
or equivalently, $W(t)\geq (\delta^2/C_2)^{\frac{1}{2^\ast-2}}$ for any $t>T_1$, a contradiction with
 $W(t)\downarrow 0$ as $t\uparrow+\iy$. Therefore, $e^{\dd t}W(t)\le C$ for any $t>T_1$ large. That is, $\bar u(r)+\bar v(r)\le C$ for $r>0$ small. Together with Theorem \ref{thm-as}, we obtain $u(x)+v(x)\leq C$ for $|x|>0$ small, so $u, v\in C^2(B_1)$ by using the standard elliptic regularity theory, namely the singularity $0$ is removable for both $u$ and $v$. This completes the proof.
\end{proof}

\bl\label{lemma2-8}If $D_\iy=0$, then $(u,v)$ can not be semi-singular.\el

\begin{proof} 
Since $D_{\iy}=0$, we believe that the singularity $0$ is removable for both $u$ and $v$ (We will prove this assertion under some conditions in the next section). Thus, here we assume that $u\le C$ for $|x|>0$ small, and we want to prove that $v$ is also bounded for $|x|>0$ small, so $(u,v )$ is not semi-singular.

Clearly $\bar{u}(|x|)\le C$ for $|x|>0$ small and $\bar{\omega}_1(t)\le Ce^{-\dd t}$ for $t$ large.
Note from \eqref{eq3-22-1} that $$\liminf_{t\to+\iy}\bar{\omega}_2=0.$$ Assume by contradiction that
$$\limsup_{t\to+\iy}\bar{\omega}_2>0.$$
Denote $W(t)=\bar{\omega}_1(t)+\bar{\omega}_2(t)$ as in Lemma \ref{lemma3-1}, then
$$\liminf_{t\to+\iy}W(t)=0,\quad \limsup_{t\to+\iy}W(t)>0,$$
so there exist local minimum points $t_n$ of $W$ such that $t_n\uparrow+\iy$ and
$W(t_n)\to 0$. 

By \eqref{eq-W}, there exist $T_2>T_0$ large and $\e_0\in (0, \frac12\limsup\limits_{t\to+\iy}W)$ small such that 
\begin{equation}\label{eq3-27}W''(t)>0\quad \text{whenever} \quad t\geq T_2\;\text{and}\;W(t)\le 2\e_0.\end{equation}
Therefore, for $n$ large, there exists $T_2<t_n^*<t_n<\bar{t}_n$ such that
\begin{equation}\label{eq3-32}
\begin{cases}W(t_n^\ast)=W(\bar{t}_n)=\e_0,\\
W'(t)<0\;\text{ for }\;t\in[t_n^\ast, t_n), \\
 W'(t)>0\;\text{ for }\;t\in(t_n, \bar{t}_n].
\end{cases} 
\end{equation}
Similarly as the proof of Lemma \ref{lemma03-3}, we have $\bar{\omega}_1(\cdot+t_n)\to 0$ and  $\bar{\omega}_2(\cdot+t_n)\to 0$ uniformly in $C_{loc}^2(\mathbb R^2)$ as $n\to+\iy$. 
Since \eqref{eq-W} and \eqref{eq3-2} together imply
\begin{equation}
W''(t)+C(t)W(t)=0,
\end{equation}
with $C(t)$ being bounded for $t>T_0$ large,
we can apply the Harnack inequality to obtain that $W(\cdot+t_n)/W(t_n)$ is locally uniformly bounded, so up to a subsequence, 
$$\frac{\bar{\omega}_1(\cdot+t_n)}{W(t_n)}\to v_1\ge 0,\quad
\frac{\bar{\omega}_2(\cdot+t_n)}{W(t_n)}\to v_2\ge 0$$
uniformly in $C_{loc}^2(\mathbb R)$. 
Moreover, $v_i''-\dd^2 v_i=0$, so $$v_i(t)=a_ie^{\dd t}+b_i e^{-\dd t}\quad\text{with}\quad a_i, b_i\ge 0.$$ Since $(v_1+v_2)(0)=1$ and $(v_1+v_2)'(0)=0$, we have $a_1+a_2=b_1+b_2=1/2$. Noting from \eqref{eq3-15} and $D_\iy=0$ that
$$D(t_n)=(\bar{\omega}_1(t_n)^2+\bar{\omega}_2(t_n)^2)O(e^{-t_n})=o(W(t_n)^2),$$
we deduce from \eqref{eq3-12} that
$$(v_1')^2(0)+(v_2')^2(0)=\dd^2v_1^2(0)+\dd^2v_2^2(0),$$
namely $$(a_1-b_1)^2+(a_2-b_2)^2=(a_1+b_1)^2+(a_2+b_2)^2,$$ which implies $a_1b_1+a_2b_2=0$.
Thus, $(a_1, a_2, b_1, b_2)=(0, \frac12, \frac12, 0)$ or $(\frac12, 0, 0, \frac12)$, that is,
\begin{equation}(v_1(t), v_2(t))=\left(\frac{1}{2}e^{-\dd t}, \frac{1}{2}e^{\dd t}\right)\quad\text{or}\quad \left(\frac{1}{2}e^{\dd t}, \frac{1}{2}e^{-\dd t}\right).\end{equation} In particular, $v_1(0)=v_2(0)=\frac{1}{2}$, so
$\frac{\bar{\omega}_1(t_n)}{W(t_n)}\to \frac{1}{2}$
and then \be\label{eq3-31}W(t_n)\le 3\bar{\omega}_1(t_n)\le Ce^{-\dd t_n}\quad\text{for $n$ large}.\ee 

Now we follow the argument of \cite[p.236]{CL3} to give a lower bound for $t_n-t_n^*$, from which and \eqref{eq3-31} we can  obtain a contradiction.
By \eqref{eq-W} and \eqref{eq3-32}, we have
$$\frac{d}{dt}\big[(W')^2-\delta^2W^2\big]\geq0\quad\text{for $t\in [t_n^*, t_n]$},$$
and so
$$W'(t)^2\leq \delta^2(W(t)^2-W(t_n)^2)\quad\text{for $t\in [t_n^*, t_n]$},$$
namely 
$-W'(t)\leq \delta\sqrt{W(t)^2-W(t_n)^2}$, or equivalently
$$dt\geq-\frac{dW}{\delta\sqrt{W(t)^2-W(t_n)^2}}\quad\text{for $t\in [t_n^*, t_n]$}.$$
Consequently, integrating over $[t_n^*, t_n]$ and using \eqref{eq3-31}, we obtain
{\allowdisplaybreaks
\begin{align}
t_n-t_n^\ast&\geq \int_{W(t_n)}^{W(t_n^*)}\frac{dW}{\delta\sqrt{W^2-W(t_n)^2}}
=\frac{1}{\delta}\int_1^{\frac{\e_0}{W(t_n)}}\frac{d\eta}{\sqrt{\eta^2-1}}\\
&=\frac{1}{\delta}\ln\left(\frac{\e_0}{W(t_n)}+\sqrt{\frac{\e_0^2}{W(t_n)^2}-1}\right)\ge\frac{1}{\delta}\ln\frac{\e_0}{W(t_n)}\nonumber\\
&\geq\frac{1}{\delta}\ln\e_0+t_n-C,\nonumber
\end{align}
}%
that is, $t_n^\ast\le C$ for any $n$ large, a contradiction with $t_n^\ast\to +\infty$.

Therefore, $\lim_{t\to+\iy}\bar{\omega}_2(t)=0$ and so
$\lim_{t\to+\iy}W(t)=0$. Applying Lemma \ref{lemma3-1}, we obtain that $v$ is also bounded for $|x|>0$ small.
 The proof is complete.
\end{proof}

For the case $D_\iy<0$, we need to discuss $N\geq 5$ and $N=4$ separately. The remaining case $N=3$ remains open.

\begin{lemma}\label{lemma2-9}
For $N\ge 5$ and $D_\iy<0$, there are constants $C_1, C_2>0$ such that for $|x|>0$ small,
$$C_1|x|^{-\dd}\le u(x), v(x)\le C_2|x|^{-\dd},$$
namely $(u,v)$ is both-singular.
\end{lemma}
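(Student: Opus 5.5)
The plan is to combine the lower bound on $W=\bar\omega_1+\bar\omega_2$ from Lemma \ref{lemma03-3} with a concavity argument for each $\bar\omega_i$ separately. The concavity comes from the sublinear coupling exponent $\frac{2^\ast}{2}-1<1$, which is exactly where $N\ge 5$ is used. The upper bounds $u,v\le C_2|x|^{-\dd}$ are already given by Lemma \ref{lemma2-6}, so only the lower bounds need proof. Set $p:=\frac{2^\ast}{2}$; for $N\ge5$ we have $1<p<2$, so $0<p-1<1$ and $2-p>0$.

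\emph{Step 1 (one component is always large).} Since $D_\iy<0$, \eqref{eq3-26} gives $C_1>0$ and $T_1$ with $\bar\omega_1(t)+\bar\omega_2(t)\ge C_1$ for $t\ge T_1$. Hence, whenever $\bar\omega_2(t)<C_1/2$, we have $\bar\omega_1(t)\ge C_1/2$.

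\emph{Step 2 (strict concavity of $\bar\omega_2$ where it is small).} Enlarge $T_1$ so that the factor $1+O(e^{-t})$ in \eqref{eq3-17} is at least $\frac12$. Then, using $\mu_2\bar\omega_2^{2^\ast-1}\ge0$, for $t\ge T_1$ we get
\[
\bar\omega_2''\le \dd^2\bar\omega_2-\tfrac{\bb}{2}\,\bar\omega_2^{p-1}\bar\omega_1^{p}.
\]
Now choose $\e\in(0,C_1/2)$ so small that $\dd^2\e^{2-p}<\frac{\bb}{2}(C_1/2)^p$. On the set $\{t\ge T_1:\bar\omega_2(t)<\e\}$, Step 1 and $\bar\omega_2>0$ give
\[
\bar\omega_2''\le \bar\omega_2^{p-1}\Big(\dd^2\bar\omega_2^{2-p}-\tfrac{\bb}{2}(C_1/2)^p\Big)<0 .
\]
So $\bar\omega_2$ is strictly concave on every connected component of $\{\bar\omega_2<\e\}\cap[T_1,\infty)$.

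\emph{Step 3 (component analysis).} Each component $I$ of $\{\bar\omega_2<\e\}\cap[T_1,\infty)$ is of one of three types.
\begin{itemize}
\item[(a)] $I$ is a bounded interval $(a,b)$ with $a>T_1$. Then $\bar\omega_2(a)=\bar\omega_2(b)=\e$, and concavity forces $\bar\omega_2\ge\e$ on $I$, which is impossible. So there are no such components.
\item[(b)] $I=[a,\infty)$ with $a>T_1$. A positive concave function on a half-line is nondecreasing, so $\bar\omega_2\ge\bar\omega_2(a)=\e$ on $I$, again impossible.
\item[(c)] $I$ starts at $T_1$, namely $I=[T_1,b)$ or $I=[T_1,\infty)$. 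If $I=[T_1,\infty)$, concavity and positivity make $\bar\omega_2$ nondecreasing, so $\bar\omega_2\ge\bar\omega_2(T_1)>0$ there. If $I=[T_1,b)$, then $\bar\omega_2\ge\e$ for $t\ge b$.
\end{itemize}
In all cases $\bar\omega_2(t)\ge c>0$ for all large $t$. The same argument with the indices swapped, using $\bb\bar\omega_1^{p-1}\bar\omega_2^{p}$ in the first equation, gives $\bar\omega_1\ge c$.

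\emph{Step 4 (back to $u,v$).} The bound $\bar\omega_i\ge c$ translates into $\bar u(r),\bar v(r)\ge c\,r^{-\dd}$ for small $r$. Theorem \ref{thm-as} then transfers this to $u(x),v(x)\ge C_1|x|^{-\dd}$.

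The main point to check is Step 2. It relies on $\bar\omega_2^{p-1}$ dominating $\bar\omega_2$ near $0$, which holds only because $p<2$; for $N=4$ the two terms are of the same order and the argument breaks down, consistent with Theorem \ref{th120}. One must also make sure the error factor $(1+O(e^{-t}))$ is handled uniformly, which is done by fixing $T_1$ large before choosing $\e$.
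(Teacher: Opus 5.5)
Your proof is correct and uses the same mechanism as the paper. For $N\ge5$ the coupling exponent $\frac{2^\ast}{2}-1<1$ makes a small component strictly concave once the other component is bounded below via \eqref{eq3-26}.

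The only difference is organizational. The paper argues by contradiction in two cases. In the oscillating case it evaluates the equation at local minima $t_n$ of $\bar\omega_1$ to force $\bar\omega_2(t_n)\to0$. In the case $\bar\omega_1\to0$ it uses concavity on a half-line. Your component analysis of $\{\bar\omega_2<\varepsilon\}$ handles both cases at once.
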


\begin{proof}
By Lemma \ref{lemma2-6} and Theorem \ref{thm-as}, we only need to prove $\bar u(x), \bar v(x)\geq C|x|^{-\delta}$ for $|x|>0$ small, or equivalently, to prove
$$\liminf_{t\to+\iy}\bar{\omega}_1(t)>0,\quad\liminf_{t\to+\iy}\bar{\omega}_2(t)>0.$$
Assume by contradiction that $\liminf_{t\to+\iy}\bar{\omega}_1(t)=0$. If $\limsup_{t\to+\iy}\bar{\omega}_1(t)>0$, then there exists a sequence of local minimum points $t_n\uparrow+\iy$ of $\bar{\omega}_1$ such that $\bar{\omega}_1(t_n)\to 0$. Then by $\bar{\omega}_1''(t_n)\ge 0$ and \eqref{eq3-17}, we get 
$$\dd^2\bar{\omega}_1(t_n)\geq\bb (1+O(e^{-t_n})) \bar{\omega}_1(t_n)^{\frac{2^\ast}{2}-1}\bar{\omega}_2(t_n)^{\frac{2^\ast}{2}},
$$
which together with $N\geq 5$ implies $\bar\omega_2(t_n)\leq C\bar\omega_1(t_n)^{\frac{N-4}{N}}\to 0$, namely $\bar\omega_1(t_n)+\bar\omega_2(t_n)\to 0$, a contradiction with \eqref{eq3-22}.
This proves $$\lim_{t\to+\iy}\bar{\omega}_1(t)=0,\quad\text{and so}\quad\liminf_{t\to+\iy}\bar{\omega}_2(t)>0.$$
Then
{\allowdisplaybreaks
\begin{align*}
\bar{\omega}_1''&=\dd^2\bar{\omega}_1-\left(\mu_1\bar{\omega}_1^{2^\ast-1}+\bb \bar{\omega}_1^{\frac{2^\ast}{2}-1}\bar{\omega}_2^{\frac{2^\ast}{2}}\right)
(1+O(e^{-t}))\\
&\le \bar{\omega}_1^{\frac{2^\ast}{2}-1}\left(\dd^2\bar{\omega}_1^{2-\frac{2^\ast}{2}}
-\frac{\bb}{2}\bar{\omega}_2^{\frac{2^\ast}{2}}\right)<0,\quad\text{for $t$ large},
\end{align*}
}%
so $\bar{\omega}_1'(t)\downarrow 0$ as $t\uparrow +\iy$, namely $\bar{\omega}_1'(t)>0$ for $t$ large, a contradiction with $\bar{\omega}_1(t)\to 0$ as $t\to+\iy$. The proof is complete.
\end{proof}

\begin{lemma}\label{lemma2-10}
For $N=4$ and $D_\infty<0$, there exists $\alpha\in (0,1]$ and constants $C_1,C_2>0$ 
such that
$|x|>0$ small,
\begin{equation}\label{eq-alpha}
        C_1|x|^{-\alpha}\le u(x),v(x)\le C_2|x|^{-1},
\end{equation}
namely $(u, v)$ is both-singular.
Moreover, if we further assume
$$
        \beta>\max\{\mu_1,\mu_2\}\quad\text{or }\beta=\mu_1=\mu_2,
$$
then we can take $\alpha=1$ in \eqref{eq-alpha}, namely for $|x|>0$ small,
\begin{equation}\label{eq-alpha1}
        C_1|x|^{-1}\le u(x),v(x)\le C_2|x|^{-1}.
\ee
\end{lemma}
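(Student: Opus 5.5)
The plan for \eqref{eq-alpha} is to work with the ODE system \eqref{eq3-17}, which for $N=4$ has $\dd=1$, $2^\ast=4$ and $\frac{2^\ast}{2}=2$. In this case it becomes linear in each component once the other is frozen:
$$\bar\omega_1''=a_1(t)\bar\omega_1,\qquad a_1(t):=1-\big(\mu_1\bar\omega_1^2+\bb\bar\omega_2^2\big)(1+O(e^{-t})),$$
and similarly for $\bar\omega_2$. The upper bound is Lemma \ref{lemma2-6}. By Theorem \ref{thm-as}, the lower bound reduces to showing $\bar\omega_i(t)\ge c\,e^{-(1-\alpha)t}$ for $t$ large.

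To get this, consider the Riccati variable $q_1:=\bar\omega_1'/\bar\omega_1$, which satisfies $q_1'=a_1-q_1^2$. By \eqref{eq02-9} we know $q_1>-1$ for all large $t$. Since $D_\iy<0$, \eqref{eq3-26} gives $\bar\omega_1+\bar\omega_2\ge C_1$, and hence
$$1-a_1\ge \tfrac12\min\{\mu_1,\bb\}(\bar\omega_1^2+\bar\omega_2^2)\ge\kappa>0$$
for $t$ large. Choose $\eta>0$ so small that $1-\kappa-(1-\eta)^2<-\kappa/2$. Whenever $q_1\in(-1,-1+\eta)$ we then have $q_1'<-\kappa/2$. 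So once $q_1$ enters this strip it stays there and decreases at a uniform rate, which means it crosses $-1$ in finite time, contradicting $q_1>-1$. Therefore $q_1\ge-1+\eta$ for $t$ large. Integrating gives $\bar\omega_1(t)\ge c\,e^{-(1-\eta)t}$, i.e.\ $\bar u(r)\ge c\,r^{-\eta}$. The same argument applies to $\bar\omega_2$, and we take $\alpha=\eta$. In particular $(u,v)$ is both-singular.

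For $\alpha=1$ the goal is $\liminf\bar\omega_i>0$. Suppose instead that $\bar\omega_1(t_n)\to0$ along some sequence. Passing to the limit as in Lemma \ref{lemma03-3} gives $\bar\omega_1(\cdot+t_n)\to v_1$ with $v_1(0)=0$, so $v_1\equiv0$. Meanwhile $v_2>0$ solves the scalar equation $v_2''-v_2+\mu_2v_2^3=0$ with energy $K=D_\iy<0$, so $v_2$ is periodic or constant and bounded below. As in the proof of Lemma \ref{lemma2-8}, the Harnack inequality (using $q_1$ bounded) lets us normalize: $\bar\omega_1(\cdot+t_n)/\bar\omega_1(t_n)\to\psi>0$, where $\psi''=(1-\bb v_2^2)\psi$ on all of $\R$. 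Now compare $\psi$ with $v_2$, which solves $v_2''=(1-\mu_2v_2^2)v_2$, by a Picone identity on $[-L,L]$:
$$\int_{-L}^{L}\Big[\Big(v_2'-\tfrac{\psi'}{\psi}v_2\Big)^2+(\bb-\mu_2)v_2^4\Big]dt=\Big[v_2v_2'-\tfrac{\psi'}{\psi}v_2^2\Big]_{-L}^{L}.$$
Since $q_1$ is bounded and $v_2$ is bounded, the right-hand side is $O(1)$. If $\bb>\mu_2$, the left-hand side is at least $cL$ because $v_2$ is bounded below, which is a contradiction for $L$ large. The same argument with the roles of the components swapped uses $\bb>\mu_1$.

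The main obstacle is the case $\bb=\mu_1=\mu_2$. There the Picone identity only forces $\psi'/\psi\equiv v_2'/v_2$, i.e.\ $\psi=c\,v_2$, and no contradiction comes for free. For this case I would first try the observation that the system then conserves $\bar\omega_1\bar\omega_2'-\bar\omega_2\bar\omega_1'$ up to $O(e^{-t})$ errors, which is the invariant exploited in \cite{COS}. The aim is to show this Wronskian tends to a limit $W_\iy$. If $\bar\omega_1\to0$ along a sequence, the normalized profile $\psi=c\,v_2$ gives $W_\iy=0$. Then $\bar\omega_1/\bar\omega_2$ is asymptotically constant, which is incompatible with $\liminf\bar\omega_1=0$ while $\bar\omega_2\ge c$. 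Alternatively, one can simply invoke \cite[Theorem 1.3]{COS} for this case.
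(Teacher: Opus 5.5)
Your proposal is correct. For the $\alpha=1$ part with $\beta>\max\{\mu_1,\mu_2\}$ it takes a genuinely different route from the paper.

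\textbf{The bound \eqref{eq-alpha}.} Here the difference is cosmetic. Since $q_1=-1-r\bar u'(r)/\bar u(r)$, your bound $q_1\ge-1+\eta$ is exactly the paper's inequality $-r\bar u'\ge\alpha\bar u$. The paper obtains it by integrating $-(r^3\bar u')'\ge Cr\bar u$ from $r=0$, using $r^3\bar u'(r)\to0$ and the monotonicity of $\bar u$. You get it from a barrier for the Riccati equation, using only $q_1>-1$ from \eqref{eq02-9}.

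\textbf{The case $\beta>\max\{\mu_1,\mu_2\}$.} The paper never passes to a limit. It writes the equation for $q=\bar\omega_1/\bar\omega_2$ as $(\bar\omega_2^2q')'=-\bar\omega_2^4q\,[\cdots]$ and notes the bracket is $\ge c_1>0$ on $\{q<\eta\}$ once $\beta>\mu_2$. It then shows this set must contain a half-line on which $q$ is nondecreasing, contradicting $\liminf q=0$.

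You instead blow up along $t_n$, obtain the autonomous limit $\psi''=(1-\beta v_2^2)\psi$, and exclude a positive solution on $\mathbb R$ by a Picone/Sturm comparison with $v_2$. Your identity is correct, and $\inf v_2>0$ (from $K<0$) makes the left side grow linearly in $L$.

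Your version shows transparently that $\beta>\mu_2$ is exactly what destroys positive solutions of the limiting Hill equation, consistent with Theorem \ref{th120}. The cost is a compactness and normalization step. The paper's version works directly on the perturbed nonautonomous system, and a similar ratio mechanism reappears in Section 5.

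You should make explicit why $q_1$ is bounded above, since \eqref{eq3-3-2} does not give it. From $q_1'=a_1-q_1^2$ with $a_1$ bounded, a large value of $q_1$ at $t_0$ would blow up in finite backward time $O(1/q_1(t_0))$. That would force $\bar\omega_1$ to vanish on $[T_0,\infty)$, which is impossible.

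\textbf{The case $\beta=\mu_1=\mu_2$.} The paper simply cites \cite[Theorem 1.3]{COS}, as your fallback does, so nothing is missing overall. Your Wronskian sketch, however, does not close as written. Let $\mathcal W:=\bar\omega_1\bar\omega_2'-\bar\omega_2\bar\omega_1'$. Having $\mathcal W\to0$ and $\bar\omega_1/\bar\omega_2$ converge is perfectly compatible with $\liminf\bar\omega_1=0$: the limit of the ratio is just $0$. You need a rate, which is available:
\begin{itemize}
\item With $\mu:=\beta=\mu_1=\mu_2$, \eqref{eq3-36} gives $\mathcal W'=\mu\bar\omega_1\bar\omega_2(\bar\omega_1^2+\bar\omega_2^2)(\varepsilon_1-\varepsilon_2)=O(e^{-t})$. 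Hence $|\mathcal W(t)|\le Ce^{-t}$.
\item Since $\frac{d}{dt}\arctan(\bar\omega_1/\bar\omega_2)=-\mathcal W/(\bar\omega_1^2+\bar\omega_2^2)$ and \eqref{eq3-26} holds, the angle converges. So $\bar\omega_1\to0$ and $\bar\omega_2\ge c$ eventually.
\item Integrating $(\bar\omega_1/\bar\omega_2)'=-\mathcal W/\bar\omega_2^2$ over $[t,\infty)$ gives $\bar\omega_1(t)\le Ce^{-t}$, i.e.\ $\bar u$ is bounded. This contradicts your first-part bound $\bar u(r)\ge cr^{-\eta}$.
\end{itemize}
Also, $\mathcal W(t_n)\to0$ follows directly from $\bar\omega_1(t_n)\to0$ and the boundedness of $q_1$, so identifying $\psi=c\,v_2$ is unnecessary.
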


\begin{proof} Remark that the proof of Lemma \ref{lemma2-9} can not work for $N=4$, and here we need to develop different ideas.
First we prove \eqref{eq-alpha}. By Lemma \ref{lemma2-6} and Theorem \ref{thm-as}, we only need to prove the existence of $\alpha>0$ and $C_1>0$ such that $\bar u(x), \bar v(x)\geq C_1|x|^{-\alpha}$ for $|x|>0$ small.

Recall \eqref{eq3-23} and $N=4$ that $\bar u(r)+\bar v(r)\geq Cr^{-1}$ for $r>0$ small, then
\begin{align*}-(r^{3}\bar{u}'(r))'&=r^{3}(\mu_1\bar{u}^3+\bb \bar{u}\bar{v}^{2})(1+O(r))\\
&\geq Cr^{3}\bar{u}(\bar u+\bar{v})^{2}\geq Cr\bar u(r),\quad\text{for $r>0$ small,}\end{align*}
Since Lemma \ref{lemma2-2} implies $r^{3}\bar{u}'(r)\to 0$ as $r\to 0$, and \eqref{eq02-8} says that $\bar{u}(r)$ is decreasing with respect to $r>0$,
we get that  for $r>0$ small,
{\allowdisplaybreaks
\begin{align*}
-r^{3}\bar u'(r)\geq C\int_{0}^r \rho\bar{u}(\rho)d\rho
\ge C\bar{u}(r)\int_{0}^r \rho d\rho=:\alpha r^2\bar{u}(r),
\end{align*}
}%
where $\alpha>0$.
This implies $\frac{d}{dr}(r^{\alpha}\bar u(r))\leq 0$ and so $\bar u(r)\geq Cr^{-\alpha}$ for $r>0$ small. Similarly we can prove $\bar v(r)\geq Cr^{-\alpha}$ for $r>0$ small. This proves \eqref{eq-alpha}.

For the special case $\beta=\mu_1=\mu_2$, it follows from \cite[Theorem 1.3]{COS} that we can take $\alpha=1$.
Now we assume $\beta>\max\{\mu_1, \mu_2\}$ and prove \eqref{eq-alpha1}. By Lemma \ref{lemma2-6}, we only need to prove
\begin{equation}\label{eq3-40}\liminf_{t\to+\iy}\bar{\omega}_1(t)>0,\quad\liminf_{t\to+\iy}\bar{\omega}_2(t)>0.\end{equation}

Suppose by contradiction, without loss of generality, that
$$
\liminf_{t\to+\infty}\bar\omega_1(t)=0.
$$
By \eqref{eq3-22}, we have
$$
\liminf_{t\to+\infty}
\bigl(\bar\omega_1(t)+\bar\omega_2(t)\bigr)>0.
$$
Hence, there exist $c_0>0$ and $T_1>T_0$ such that
$\bar\omega_1(t)+\bar\omega_2(t)\ge c_0$ for $t\geq T_1$.
Set
$$
q(t):=\frac{\bar\omega_1(t)}{\bar\omega_2(t)}>0.
$$
Then
\be\label{eq3-37}
\liminf_{t\to+\infty} q(t)=0.
\ee

For $N=4$, we see from \eqref{eq3-17} that for $t$ large,
\begin{equation}\label{eq3-36}
\begin{cases}
\bar\omega_1''-\bar\omega_1+
(1+\varepsilon_1(t))\left(\mu_1\bar\omega_1^3
+\beta\bar\omega_1\bar\omega_2^2\right)=0,\\
\bar\omega_2''-\bar\omega_2+
(1+\varepsilon_2(t))\left(\mu_2\bar\omega_2^3
+\beta\bar\omega_2\bar\omega_1^2\right)=0,
\end{cases}
\end{equation}
where $\varepsilon_i(t)=O(e^{-t})$.
Using \eqref{eq3-36} and $\bar\omega_1''-q\bar\omega_2''
=\bar\omega_2 q''+2\bar\omega_2'q'$, we easily obtain
$$
q''+2\frac{\bar\omega_2'}{\bar\omega_2}q'
+\bar\omega_2^2q
\left[
(1+\varepsilon_1)\beta-(1+\varepsilon_2)\mu_2
+
\bigl((1+\varepsilon_1)\mu_1-(1+\varepsilon_2)\beta\bigr)q^2
\right]=0.
$$
Equivalently,
$$
\left(\bar\omega_2^2q'\right)'+\bar\omega_2^4q
\left[(1+\varepsilon_1)\beta-(1+\varepsilon_2)\mu_2+
\bigl((1+\varepsilon_1)\mu_1-(1+\varepsilon_2)\beta\bigr)q^2
\right]=0.
$$

Since $\beta>\mu_2$ and $\varepsilon_i(t)\to0$, we may choose $\eta>0$ small and
$T_2\ge T_1$ large such that, whenever $t\ge T_2$ and $0<q(t)\le\eta$,
$$
\left[(1+\varepsilon_1)\beta-(1+\varepsilon_2)\mu_2+
\bigl((1+\varepsilon_1)\mu_1-(1+\varepsilon_2)\beta\bigr)q(t)^2
\right]\ge c_1>0.
$$
Moreover, on the same set, it follows from
$\bar\omega_1(t)+\bar\omega_2(t)=\bar\omega_2(t)(q(t)+1)\geq c_0$ that
$0<c_2\le \bar\omega_2(t)\le C_2$.
Therefore, on every connected component of the set
$$\mathcal{T} :=\big\{t> T_2\;:\;q(t)<\eta\big\},$$
we have
$$
\left(\bar\omega_2^2q'\right)'\le -c_3q<0.
$$

Note from \eqref{eq3-37} that $\mathcal{T}\cap (R,+\infty)\neq \emptyset$ for all $R>T_2$. We claim that the open set $\mathcal T$ can not contain any other bounded connected component except at most the one with $T_2$ being its left endpoint. Indeed, suppose $(a,b)\subset\mathcal T$ is a bounded component with $b>a>T_2$, then $q(a)=q(b)=\eta$, so $q$ attains a local minimum at some point $s\in (a,b)$. Moreover, 
$q'(s)=0$, $q''(s)\ge0$, so
$$
0\leq\left(\bar\omega_2^2q'\right)'(s)\le -c_3q(s)<0,
$$
a contradiction.
Thus, $\mathcal T$ must contain an unbounded component $(a,+\infty)$.
Restricting to this component, we set
$$
        A(t):=\bar\omega_2(t)^2q'(t).
$$
Then $A'(t)\leq -c_3q(t)<0$. If $A(t_0)<0$ for some $t_0>a$, then for all $t>t_0$,
$$
        q'(t)=\frac{A(t)}{\bar\omega_2(t)^2}
        \le C_2^{-2} A(t_0)<0,
$$
so $q$ becomes negative in finite time, a contradiction. Hence $A(t)\ge0$ on
$(a,+\infty)$, i.e., $q'(t)\geq 0$ and so $q$ is non-decreasing on $(a, +\infty)$. But this contradicts with \eqref{eq3-37}. Therefore, \eqref{eq3-40} holds and so does \eqref{eq-alpha1}. The proof is complete.
\end{proof}

\begin{proof}[Proof of Theorems \ref{th2} and \ref{th20}]
Clearly Theorems \ref{th2} and \ref{th20} follow directly from Lemmas \ref{lemma2-8}, \ref{lemma2-9} and \ref{lemma2-10}.
\end{proof}

\section{Proof of Theorems \ref{thm:N3-semi} and \ref{th120}}

This section is devoted to the proofs of Theorems \ref{thm:N3-semi} and \ref{th120} by using  the stable invariant manifold theory of ODEs.

\begin{proof}[Proof of Theorem \ref{th120}]  Assume $N=4$ and
$$0<\beta<\mu_1,\quad \mu_2>0.$$
Then the system \eqref{eq3-9} becomes
\be\label{e0q3-9}
\begin{cases}
\begin{split}v_{1}''-v_{1}+\mu_1v_1^{3}+\bb v_1v_2^{2}=0\\
v_{2}''-v_{2}+\mu_2v_2^{3}+\bb v_2v_1^{2}=0\\
\end{split}  \quad   t\in\R.\end{cases}\ee
Define
$$\vec{y}(t)=\begin{pmatrix}y_1(t)\\
y_2(t)\\
y_3(t)\\
y_4(t)\end{pmatrix}:=\begin{pmatrix}v_1(t)\\
v_1'(t)\\
v_2(t)\\
v_2'(t)\end{pmatrix}$$
and
$$F(\vec{y}):=\begin{pmatrix}y_2\\y_1-\mu_1y_1^3-\beta y_1y_3^2\\y_4\\y_3-\mu_2y_3^3-\beta y_3y_1^2\end{pmatrix},$$
then \eqref{e0q3-9} is equivalent to the system $\vec{y}'(t)=F(\vec{y}(t))$.
Given any $\vec{a}\in \mathbb{R}^4$, we consider the initial value problem
\begin{equation}
\label{eqqq0-1}
\begin{cases}\vec{y}'(t)=F(\vec{y}(t)),\\
\vec{y}(0)=\vec{a}.
\end{cases}
\end{equation}
By the theory of ODEs, we know that there is a unique solution $\vec{y}(t; \vec{a})$. Since
\begin{align*}
&\frac{1}{2}(y_2^2+y_4^2-y_1^2-y_3^2)(t)+\frac{1}{4}(\mu_1y_1^{4}+2\bb y_1^{2}y_3^{2}+\mu_2y_3^{4})(t)\\
\equiv& \frac{1}{2}(a_2^2+a_4^2-a_1^2-a_3^2)+\frac{1}{4}(\mu_1a_1^{4}+2\bb a_1^{2}a_3^{2}+\mu_2a_3^{4}),
\end{align*}
so the maximum interval for $t$ is $\mathbb{R}$, namely $\vec{y}(t; \vec{a})$ is well-defined for $t\in\mathbb R$. 

Denote
$$\vec{a}_0:=\begin{pmatrix}\frac{1}{\sqrt{\mu_1}}\\
0\\
0\\
0\end{pmatrix},$$
then $F(\vec{a}_0)=0$ and so $\vec{y}(t;\vec{a}_0)\equiv \vec{a}_0$. Such $\vec{a}_0$ is called an equilibrium point of the system \eqref{eqqq0-1} in the theory of ODEs (cf. \cite{P}).
A direct computation gives
$$DF(\vec{a}_0)=\begin{pmatrix}0&1&0&0\\-2&0&0&0\\0&0&0&1\\0&0&1-\frac{\beta}{\mu_1}&0\end{pmatrix}.$$
Thanks to our assumption $\beta<\mu_1$,
we see that the eigenvalues of $DF(\vec{a}_0)$ are $\pm\sqrt{2} i$ and $\pm\sqrt{1-\frac{\beta}{\mu_1}}$. Denote by
$$\lambda_0=-\sqrt{1-\frac{\beta}{\mu_1}}<0$$
to be the unique eigenvalue with negative real part
and the eigenspace of $\lambda_0$ by
$$E_0:=\mathbb{R}\begin{pmatrix}0\\
0\\
1\\
\lambda_0\end{pmatrix}.$$
Then by the stable manifold theorem (see e.g. \cite[Theorem in p.116]{P} or \cite[Theorem 3.2.1]{GH}), there exists a one-dimensional smooth and stable manifold $E^{s}\subset\mathbb R^4$ satisfying the following properties:
\begin{itemize}
\item[(i)] $\vec{a}_0\in E^{s}$;
\item[(ii)] $E^s$ is tangent to the eigenspace $E_0$ at $\vec{a}_0$;
\item[(iii)] $E^s$ is positively invariant under the flow $\vec{y}(t,\cdot)$: $\vec{y}(t; E^s)\subset E^s$ for any $t\geq 0$;
\item[(iv)] For any $\vec{a}\in E^s$, 
\begin{equation}\label{eeff}\lim_{t\to+\infty}\vec{y}(t; \vec{a})=\vec{a}_0.\end{equation}
\end{itemize}

By Properties (i)-(ii), we can take $\vec{a}\in E^s$ close to $\vec{a}_0$ such that $a_1>0$ and $a_3\neq 0$. Then the solution $\vec{y}(t):=\vec{y}(t; \vec{a})$ of \eqref{eqqq0-1} satisfies
$$y_1(t)\not\equiv 0,\quad y_3(t)\not\equiv 0.$$
By $\beta<\mu_1$, we can take $\varepsilon\in (0, \frac{1}{\sqrt{\mu_1}})$ small such that
\begin{equation}\label{qeeq-2}
1-\mu_2\varepsilon^2-\beta\Big(\frac{1}{\sqrt{\mu_1}}+\varepsilon\Big)^2>0.
\end{equation}
Then by \eqref{eeff}, there exists $T>0$ large such that $y_3(T)\neq 0$ and
\begin{equation}\label{qeeq-3}\Big|y_1(t)-\frac{1}{\sqrt{\mu_1}}\Big|+|y_2(t)|+|y_3(t)|+|y_4(t)|<\varepsilon,\quad\forall t\geq T.\end{equation}
In particular, $y_1(t)>0$ for all $t\geq T$. Without loss of generality, we may assume $y_3(T)>0$ (otherwise, we just need to replace $(y_3, y_4)$ with $(-y_3,-y_4)$ because $(y_1, y_2, -y_3,-y_4)$ is also a solution of \eqref{eqqq0-1}). We claim that
$$y_3(t)>0,\quad\forall t\geq T.$$
Assume by contradiction that $y_3(t)$ changes sign in $[T, +\infty)$. Since $y_3(t)\to 0$ as $t\to+\infty$, then there exists $t_1>T$ being a local minimum point of $y_3(t)$ with $y_3(t_1)<0$, so $y_3''(t_1)\geq 0$. However, \eqref{qeeq-2}-\eqref{qeeq-3} imply
$$y_3''(t_1)=y_4'(t_1)=y_3(t_1)(1-\mu_2y_3(t_1)^2-\beta y_1(t_1)^2)<0,$$
a contradiction. Thus $y_3(t)\geq 0$ for any $t\geq T$, and then the strong maximum principle implies $y_3(t)>0$ for any $t\geq T$.

Let $$v_1(t):=y_1(t+T+\log 2),\quad v_2(t):=y_3(t+T+\log 2).$$Then $(v_1, v_2)$ is a solution of \eqref{e0q3-9} and satisfies
$$v_1(t), v_2(t)>0\quad\text{for }t\geq -\log 2,\quad \lim_{t\to+\infty}v_1(t)=\frac{1}{\sqrt{\mu_1}},\quad \lim_{t\to+\infty}v_2(t)=0.$$
In particular, the corresponding $K$ defined in \eqref{eq003-2} is given by $K=\frac{-1}{4\mu_1}<0$.
Define
$$u(x):=|x|^{-1}v_1(-\log |x|),\quad  v(x):=|x|^{-1}v_2(-\log |x|),\quad |x|\leq 2,$$
Then $(u,v)$ is a positive and radially symmetric solution of \eqref{eq0} with $D(u,v)<0$ and
$$\lim_{x\to 0}|x|u(x)=\frac{1}{\sqrt{\mu_1}},\quad \lim_{x\to 0}|x|v(x)=0.$$
The proof is complete.
\end{proof}

\begin{proof}[Proof of Theorem \ref{thm:N3-semi}] Assume $N=3$ and $\mu_1, \mu_2, \beta>0$.
Then the system \eqref{eq3-9} becomes
\begin{equation}\label{eq:N3-ODE}
\begin{cases}
v_1''-\dfrac14v_1+\mu_1v_1^5+\beta v_1^2v_2^3=0,\\[2mm]
v_2''-\dfrac14v_2+\mu_2v_2^5+\beta v_2^2v_1^3=0.
\end{cases}
\end{equation}
Set
\begin{equation}\label{eq:a-star}
a_*:=(4\mu_2)^{-1/4}.
\end{equation}
Then $(v_1,v_2)=(0,a_*)$ is a solution of \eqref{eq:N3-ODE}.
As in the proof of Theorem  \ref{th120}, we define
\[
\vec y(t):=
\begin{pmatrix}
y_1(t)\\y_2(t)\\y_3(t)\\y_4(t)
\end{pmatrix}
:=
\begin{pmatrix}
v_1(t)\\v_1'(t)\\v_2(t)\\v_2'(t)
\end{pmatrix}
\]
and
\begin{equation}\label{eq:F-vectorfield}
F(\vec y):=
\begin{pmatrix}
y_2\\[1mm]
\dfrac14y_1-\mu_1y_1^5-\beta y_1^2y_3^3\\[1mm]
y_4\\[1mm]
\dfrac14y_3-\mu_2y_3^5-\beta y_3^2y_1^3
\end{pmatrix}.
\end{equation}
Then \eqref{eq:N3-ODE} is equivalent to
\begin{equation}\label{eq:first-order}
\vec y'(t)=F(\vec y(t)).
\end{equation}
Let
\begin{equation}\label{eq:a0}
\vec a_0:=
\begin{pmatrix}
0\\0\\a_*\\0
\end{pmatrix}.
\end{equation}
Clearly $F(\vec a_0)=0$ and so $\vec{y}(t;\vec{a}_0)\equiv \vec{a}_0$.
A direct computation gives
\begin{equation}\label{eq:linearization}
DF(\vec a_0)=
\begin{pmatrix}
0&1&0&0\\
\frac14&0&0&0\\
0&0&0&1\\
0&0&-1&0
\end{pmatrix},
\end{equation}
hence the eigenvalues of $DF(\vec a_0)$ are
$
\pm\frac12$ and $\pm i$.
In particular, the unique eigenvalue with negative real part is
\[
\lambda_0=-\frac12=-\delta,
\]
and its eigenspace is
\begin{equation}\label{eq:E0}
E_0
=
\R
\begin{pmatrix}
1\\-\frac12\\0\\0
\end{pmatrix}.
\end{equation}
Again by the stable manifold theorem, there exists a one-dimensional smooth and stable manifold $E^{s}\subset\mathbb R^4$ satisfying the following properties:
\begin{itemize}
\item[(i)] $\vec{a}_0\in E^{s}$;
\item[(ii)] $E^s$ is tangent to the eigenspace $E_0$ at $\vec{a}_0$;
\item[(iii)] $E^s$ is positively invariant under the flow $\vec{y}(t,\cdot)$: $\vec{y}(t; E^s)\subset E^s$ for any $t\geq 0$;
\item[(iv)] For any $\vec{a}\in E^s$, 
\begin{equation}\label{eeff-N3}\lim_{t\to+\infty}\vec{y}(t; \vec{a})=\vec{a}_0.\end{equation}
\end{itemize}

Compared with the four-dimensional construction in Theorem \ref{th120}, here we need to
use the precise decay rate along $E^s$ for the three-dimensional case.
Since the first component of the stable eigenvector in \eqref{eq:E0} is nonzero, after shrinking $E^s$ if necessary we may use $y_1$ as a local coordinate on $E^s$.  Thus there exist smooth functions $\psi_j$ such that, for $|s|$ small,
\begin{equation}\label{eq:stable-graph}
E^s=
\left\{
\begin{pmatrix}
s\\ \psi_2(s)\\ a_*+\psi_3(s)\\ \psi_4(s)
\end{pmatrix}: |s|<\rho
\right\},
\end{equation}
with
\begin{equation}\label{eq:stable-tangent}
\psi_2(s)=-\frac12s+O(s^2),
\quad
\psi_3(s)=O(s^2),
\quad
\psi_4(s)=O(s^2)
\quad\text{as }s\to0.
\end{equation}

Take a point $\vec a\in E^s$ on the branch $s>0$ of $E^s$, sufficiently close to $\vec a_0$, and let $\vec y(t)=\vec y(t; \vec a)$ be the corresponding solution of \eqref{eq:first-order} with the initial value $\vec y(0)=\vec a$. In particular, $y_1(t)>0$ for $t\geq 0$ small. 
We claim that
\begin{equation}\label{eq:y1-positive}
y_1(t)>0\qquad\text{for all }\;t\ge0.
\end{equation}
Indeed, after shrinking the local manifold $E^s$, we may assume $E^s\cap\{\vec y\in \mathbb R^4 : y_1=0\}=\{\vec a_0\}$.  Hence, if $y_1(t_0)=0$ for some $t_0>0$, it follows from Property (iii) that $y(t_0)=\vec{a}_0$, so $y(t)\equiv \vec a_0$ by the uniqueness of solutions, a contradiction to our choice of the initial value. This proves \eqref{eq:y1-positive}.
Similarly, by choosing the neighborhood of $\vec a_0$ sufficiently small, we also have
\begin{equation}\label{eq:y3-positive}
y_3(t)\geq\frac{a_*}{2}>0\qquad\text{for all }t\ge0.
\end{equation}

By \eqref{eq:stable-graph}--\eqref{eq:stable-tangent},
\begin{equation}\label{eq:s-reduced}
y_1'(t)=y_2(t)=\psi_2(y_1(t))
=-\frac12y_1(t)+O(y_1(t)^2),\quad t\geq 0.
\end{equation}
Note from \eqref{eeff-N3} that $y_1(t)\to0$ as $t\to+\infty$. From here and \eqref{eq:s-reduced}, there is $t_1>0$ such that
\[
y_1'(t)\le-\frac14y_1(t),\quad t\geq t_1,
\]
so $y_1(t)=O(e^{-t/4})$ for $t\geq t_1$ and then $y_1\in L^1([0,+\infty))$.  Dividing \eqref{eq:s-reduced} by $y_1(t)>0$, we obtain
\[
\frac{d}{dt}\log\bigl(e^{t/2}y_1(t)\bigr)=O(y_1(t)).
\]
Since $y_1\in L^1([0,+\infty))$, the right-hand side is integrable.  Therefore,
\begin{equation}\label{eq:A0}
A_0:=\lim_{t\to+\infty}e^{t/2}y_1(t)
\end{equation}
exists and satisfies $A_0\in(0,+\infty)$.  In particular,
\begin{equation}\label{eq:y1-asym}
y_1(t)=A_0e^{-t/2}(1+o(1))\quad\text{as }t\to+\infty.
\end{equation}
Furthermore, by \eqref{eq:stable-tangent},
\begin{equation}\label{eq:y3-asym}
y_3(t)=a_*+O(y_1(t)^2)=a_*+O(e^{-t})\quad\text{as }t\to+\infty.
\end{equation}

Define
\[
v_1(t):=y_1(t+\log2),
\quad
v_2(t):=y_3(t+\log2),
\quad t\ge-\log2.
\]
Then $(v_1,v_2)$ is a positive solution of \eqref{eq:N3-ODE} on $[-\log2,+\infty)$ and
\begin{equation}\label{eq:v-asym}
v_1(t)=Ae^{-t/2}(1+o(1)),
\quad
v_2(t)=a_*+O(e^{-t}),\quad\text{as }t\to+\infty,
\end{equation}
for some $A>0$.
Define
\[
u(x):=|x|^{-1/2}v_1(-\log|x|),
\quad
v(x):=|x|^{-1/2}v_2(-\log|x|),
\quad |x|\leq 2.
\]
Then $(u,v)$ is a positive radially symmetric solution of \eqref{eq0}.  From \eqref{eq:v-asym},
\[
\lim_{|x|\to0}u(x)=A>0,
\quad
\lim_{|x|\to0}|x|^{1/2}v(x)=a_*=(4\mu_2)^{-1/4}.
\]
Thus $u$ is bounded near the origin whereas $v(x)\to+\infty$ as $|x|\to0$, namely $(u,v)$ is semi-singular.

It remains to compute the Pohozaev invariant.  Since $(v_1,v_1',v_2,v_2')\to(0,0,a_*,0)$ as $t\to+\infty$, it follows from \eqref{eq003-2}, $2^*=6$, $\delta=1/2$ and $\mu_2a_*^4=1/4$ that
\begin{align*}
K
=-\frac18a_*^2+\frac16\mu_2a_*^6
=-\frac1{24\sqrt{\mu_2}}<0.
\end{align*}
Consequently,
\[
D(u,v)=|\mathbb S^2|K=-\frac{\pi}{6\sqrt{\mu_2}}<0.
\]

Finally, 
the last assertion follows by repeating the same construction around the solution 
$((4\mu_1)^{-1/4},0)
$
of \eqref{eq:N3-ODE}. This completes the proof.
\end{proof}

\section{Removable Singularity when $D_\infty=0$}
\label{sec-5}

Recall that if the isolated singularity $0$ is removable for both $u$ and $v$, i.e., $u, v$ are both bounded for $|x|>0$ small, then $D_\infty=0$.
The purpose of this section is to prove the converse statement Theorem \ref{th3}.

\begin{theorem}[=Theorem \ref{th3}]\label{thm3-1}
Assume that $D_\infty=0$. If either
$N\ge 5$, or
$$
N=4,\qquad\beta>\min\{\mu_1,\mu_2\}\ \text{or}\ \beta=\mu_1=\mu_2,
$$
then the isolated singularity $0$ is removable for both $u$ and $v$.
\end{theorem}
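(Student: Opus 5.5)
By Lemma \ref{lemma3-1}, it suffices to prove that $W(t)=\bar\omega_1(t)+\bar\omega_2(t)\to0$ as $t\to+\infty$. Since $D_\infty=0$, \eqref{eq3-22-1} already gives $\liminf W=\liminf\bar\omega_1=\liminf\bar\omega_2=0$. I would argue by contradiction and assume $\limsup W>0$. Then I would reuse the machinery of Lemma \ref{lemma2-8}. Choose $\e_0$ as in \eqref{eq3-27}, local minima $t_n\uparrow\infty$ of $W$ with $W(t_n)\to0$, and the adjacent times $t_n^*<t_n<\bar t_n$ with $W=\e_0$ as in \eqref{eq3-32}. The normalized blow-up $\bar\omega_i(\cdot+t_n)/W(t_n)$, together with \eqref{eq3-15} and $D_\infty=0$, forces the limit profile to be $(\tfrac12e^{-\dd t},\tfrac12e^{\dd t})$ or the symmetric one. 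Passing to a subsequence, I would fix the first alternative. This means that through $t_n$ the component $\bar\omega_1$ is decaying and $\bar\omega_2$ is growing, with $\bar\omega_1(t_n)\approx\bar\omega_2(t_n)\approx W(t_n)/2$.

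Next I would look at the excursion on $[t_n,\bar t_n]$ and beyond, where $W$ climbs back to order one. Translating by $\bar t_n$ and using \eqref{eq3-2} and compactness, the limit $(v_1,v_2)$ solves \eqref{eq3-9} with $K=D_\infty=0$ and tends to $0$ as $t\to-\infty$. Along the approach from $t_n$ to $\bar t_n$ the ratio $\bar\omega_1/\bar\omega_2$ is of order $e^{-2\dd(\bar t_n-t_n)}\to0$, since $\bar t_n-t_n\to\infty$ by the same logarithmic lower bound as in Lemma \ref{lemma2-8}. So I expect the limit to be semitrivial, namely $v_1\equiv0$ and $v_2$ the (translated) ground state of the scalar Fowler ODE. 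The semitrivial solutions are identified through the classification of entire solutions \cite{ChenLi,GL}. The aim is then to show that during the time when $\bar\omega_2\ge c>0$, the small component $\bar\omega_1$ cannot stay small.

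For $N\ge5$ the key point is the sublinear coupling $\bar\omega_1^{\frac{2^\ast}{2}-1}$ with $\frac{2^\ast}{2}-1<1$. As in the proof of Lemma \ref{lemma2-9}, whenever $\bar\omega_2\ge c$ and $\bar\omega_1$ is small we get $\bar\omega_1''<0$. Thus $\bar\omega_1$ is strictly concave on the whole window where $\bar\omega_2\ge c$. Since $\bar\omega_1$ is decreasing when it enters this window, concavity makes it keep decreasing with slope bounded away from zero. Over a window of length bounded below, this forces $\bar\omega_1<0$, a contradiction. If instead $\bar\omega_1$ becomes of order one, the limit would not be semitrivial, which is again a contradiction. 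For this step I would make quantitative the estimate $\bar\omega_1''\le\bar\omega_1^{\frac{2^\ast}{2}-1}(\dd^2\bar\omega_1^{2-\frac{2^\ast}{2}}-\frac{\bb}{2}\bar\omega_2^{\frac{2^\ast}{2}})$ from Lemma \ref{lemma2-9}.

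For $N=4$ the coupling is linear, and I would instead use the equation for $q=\bar\omega_1/\bar\omega_2$ from Lemma \ref{lemma2-10}. Assume, say, $\beta>\mu_2$; this is the only place the hypothesis $\beta>\min\{\mu_1,\mu_2\}$ enters, after relabelling. On the window where $\bar\omega_2\ge c$ and $q$ is small, the coefficient $(1+\e_1)\beta-(1+\e_2)\mu_2$ is positive, so $(\bar\omega_2^2q')'\le -c_3q$. Combining this with the fact that $q$ is decreasing on entering the window should give the same finite-time sign change of $q$. The case $\beta=\mu_1=\mu_2$ is covered by \cite[Theorem 1.5]{GKS}.

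The main obstacle is the bookkeeping of which component plays the role of $\bar\omega_1$ in each excursion: I must make sure the decaying component is the one to which the concavity (or $q$-) argument applies. For $N=4$ this means the hypothesis must cover the component that is actually small, and I would handle it by choosing the labelling in the blow-up alternative so that the relevant inequality $\beta>\mu_j$ holds.
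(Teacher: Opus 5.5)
Your architecture for $N\ge5$ matches the paper's: valley profile, then one component negligible at the exit, then sublinear coupling forces the small component up. However, two steps do not go through as written.

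\textbf{The exit ratio is not justified.} The claim that at the exit time $\bar t_n$ the ratio $\bar\omega_1/\bar\omega_2$ is of order $e^{-2\delta(\bar t_n-t_n)}$ does not follow from the blow-up. The functions $\bar\omega_i(t_n+\cdot)/W(t_n)$ converge only in $C^2_{\mathrm{loc}}$, i.e.\ on $|s|\le S$ with $S$ fixed, whereas $\bar t_n-t_n\to\infty$. The logarithmic lower bound you cite is exactly why the profile says nothing at $\bar t_n$.

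This is the content of Lemma~\ref{lemma3-3}, and it needs a real argument. One ingredient is an \emph{upper} bound $\tau_k^+-t_k\le\frac1\delta\ln\frac{2\varepsilon}{m_k}+C_\varepsilon$, obtained from $W''\ge\delta^2W-C_2W^{2^\ast-1}$. The other is the monotonicity of $e^{-\delta t}P_j(t)$, where $P_j:=\bar\omega_j'+\delta\bar\omega_j>0$ and $\bar\omega_j$ is the component that decays through the valley. One has $P_j(t_k)=o(m_k)$, while $\bar\omega_j\ge a_0$ just after the exit would give $P_j(t_k)\ge c\,e^{-\delta(\tau_k^+-t_k)}\ge c\,m_k$. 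For $N\ge5$, the ratio invariance in the final Remark is an alternative route.

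Once this is in place, your concavity step is the paper's. One correction: the slope of $\bar\omega_1$ is not bounded away from zero. What forces extinction is the quantitative inequality $\bar\omega_1''\le-c\,\bar\omega_1^{\frac{2^\ast}{2}-1}$ with exponent $<1$, which is the paper's $\cos\frac{\pi s}{2\ell}$ observation.

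\textbf{For $N=4$ the mechanism fails.} Since $D_\infty=0$, $W$ keeps returning to arbitrarily small values, so a window on which $\bar\omega_2\ge c$ has bounded length. The inequality $(\bar\omega_2^2q')'\le-c_3q$ is \emph{linear} in $q$, so by scaling, smallness of $q$ gives nothing, and $q$ need not vanish inside such a window. In Lemma~\ref{lemma2-10} the sign change used $W\ge c_0$ on a whole half-line.

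What you are missing is that $A=\bar\omega_2^2q'=\bar\omega_2\bar\omega_1'-\bar\omega_1\bar\omega_2'$ satisfies $A'=-\bar\omega_2^4q[\cdots]<0$ on all of $\{t\ge T_2:\,q<\eta\}$, with no lower bound on $\bar\omega_2$. Moreover, $q'=A/\bar\omega_2^2\le A(t_0)/C_2^2$ uses only the upper bound \eqref{eq3-2}. So the argument must run globally in time, through later valleys. It can start at $t_n+S$, where the profile already gives $q<\eta$ and $q'<0$; no exit-time information is needed.

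\textbf{Your labelling fix is not legitimate.} The hypothesis $\beta>\mu_2$ controls only the regime $\bar\omega_1\ll\bar\omega_2$. Which alternative occurs at the valleys is dictated by the solution, and exchanging $u,v$ also exchanges $\mu_1,\mu_2$. In the other alternative, $\bar\omega_1\ll\bar\omega_2$ holds \emph{before} the valley.

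The missing idea is time reversal. By the same sign of $A'$, the set $\{q<\eta,\ q'>0\}$ is backward invariant, so $q<\eta$ on $[T_2,t_n-S]$. This contradicts the earlier valleys, where $q(t_j)\to1$. This forward/backward invariance, written for $L=\log(\bar\omega_2/\bar\omega_1)$, is precisely the paper's Case~1.
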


Remark that those methods in \cite{CGS, ChenLin1} can not be applied in the proof of Theorem \ref{thm3-1}. Here we develop different ideas to treat $N\geq 5$ and $N=4$ separately. The proof of Theorem \ref{thm3-1} is quite long, and 
we divide the proof into several steps. 
Throughout this section, we always let $D_\infty=0$. Then by Lemma \ref{lemma3-1}, to prove Theorem \ref{thm3-1} is equivalent to prove
\begin{equation}\label{eq4-1}
W(t):=\bar\omega_1(t)+\bar\omega_2(t)\to 0\quad\text{as }t\to+\infty.
\end{equation}
Recalling \eqref{eq3-22-1} that there holds
\begin{equation}\label{eq4-2}
\liminf_{t\to+\infty}W(t)=0.
\ee
The following lemma studies the structure of valley points if \eqref{eq4-1} does not hold. 

\begin{lemma}\label{lemma3-2}
Suppose \eqref{eq4-1} does not hold. Then there exist local minimum points
$t_k\to+\infty$ of $W(t)$ such that
\begin{equation}\label{eq4-4}
m_k:=W(t_k)\to0,\qquad W'(t_k)=0.
\end{equation}
For every such sequence, after passing to a subsequence, the normalized functions
$$
Z_{i,k}(s):=\frac{\bar\omega_i(t_k+s)}{m_k},\quad i=1,2,
$$
converge uniformly in $C^2_{\mathrm{loc}}(\mathbb R)$ to one of the following two profiles:
\be\label{eq4-3}
(Z_1(s),Z_2(s))=
\left(\frac12e^{\delta s},\frac12e^{-\delta s}\right)
\quad\text{or}\quad
(Z_1(s),Z_2(s))=
\left(\frac12e^{-\delta s},\frac12e^{\delta s}\right).
\ee
\end{lemma}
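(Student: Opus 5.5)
The argument reuses, almost verbatim, the first half of the proof of Lemma \ref{lemma2-8}, where the same blow-up at valley points was carried out. We split it into the existence of the valley points and the identification of the limit profile.

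\textbf{Existence of the valley points.} Suppose \eqref{eq4-1} fails. Then $\limsup_{t\to+\infty}W(t)>0$, while \eqref{eq4-2} gives $\liminf_{t\to+\infty}W(t)=0$. Fix $\e_0\in(0,\frac12\limsup W)$ as in \eqref{eq3-27}; by \eqref{eq-W}, $W''>0$ whenever $t\ge T_2$ and $W(t)\le 2\e_0$. The function $W$ oscillates between values below any small level and values above $2\e_0$ infinitely often. Take any interval on which $W$ dips below $m<\e_0$ and is bounded by $\e_0$ at its endpoints. On such an interval $W$ attains an interior minimum point $t_k$, which satisfies $W'(t_k)=0$ and $W(t_k)\le m$. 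Letting $m\to0$ produces local minima $t_k\to+\infty$ with $m_k=W(t_k)\to0$, which is \eqref{eq4-4}.

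\textbf{Compactness of the rescaled functions.} Now take any such sequence. From \eqref{eq-W} and \eqref{eq3-2} we can write $W''=C(t)W$ with $C(t)$ bounded for large $t$. The Harnack inequality for this one-dimensional linear equation (or a direct Gronwall comparison, since $W>0$) shows that $W(t_k+\cdot)/m_k$ is bounded on compact sets, uniformly in $k$. Because $\bar\omega_i\le W$, the functions $Z_{i,k}$ are locally bounded as well. We divide \eqref{eq3-17} by $m_k$. The nonlinear terms are then of size $O(W^{2^\ast-2})\,Z_{i,k}\to0$ locally uniformly, because $W(t_k+\cdot)\le Cm_k\to0$ on compacts. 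Hence $Z_{i,k}''$ is locally bounded, and bootstrapping through the equation gives convergence in $C^2_{\rm loc}(\R)$ along a subsequence.

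\textbf{Identification of the limit.} The limits satisfy $Z_i''=\dd^2Z_i$ and $Z_i\ge0$ on all of $\R$. Therefore $Z_i=a_ie^{\dd s}+b_ie^{-\dd s}$ with $a_i,b_i\ge0$, since nonnegativity on the whole line forces both coefficients to be nonnegative. The conditions $(Z_1+Z_2)(0)=1$ and $(Z_1+Z_2)'(0)=0$ give $a_1+a_2=b_1+b_2=\frac12$.

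The key extra relation comes from the Pohozaev quantity. Since $D_\infty=0$, \eqref{eq3-15} gives $D(t_k)=O(m_k^2e^{-t_k})=o(m_k^2)$. We divide \eqref{eq3-12}, evaluated at $t_k$, by $m_k^2$. The nonlinear part is $O(m_k^{2^\ast})=o(m_k^2)$. This leaves
\[
(Z_1'(0))^2+(Z_2'(0))^2=\dd^2\big(Z_1(0)^2+Z_2(0)^2\big),
\]
that is, $(a_1-b_1)^2+(a_2-b_2)^2=(a_1+b_1)^2+(a_2+b_2)^2$, which is equivalent to $a_1b_1+a_2b_2=0$. Since all the coefficients are nonnegative, $a_1b_1=a_2b_2=0$. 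Combined with the two sum constraints, this leaves exactly $(a_1,b_1,a_2,b_2)=(\frac12,0,0,\frac12)$ or $(0,\frac12,\frac12,0)$, which are the two profiles in \eqref{eq4-3}.

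\textbf{Main obstacle.} The step needing the most care is the uniform local bound on $W(t_k+s)/m_k$. It must hold on every compact set in $s$, not only near $s=0$, because the limit profile grows like $e^{\dd|s|}$. This follows from the linear Harnack/ODE comparison, since $C(t)$ is uniformly bounded and $W>0$.
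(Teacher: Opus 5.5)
Your proposal is correct and follows essentially the same route as the paper: local compactness of $Z_{i,k}$, the limit $Z_i=a_ie^{\dd s}+b_ie^{-\dd s}$ with $a_1+a_2=b_1+b_2=\frac12$, and then $D_\infty=0$ together with \eqref{eq3-15}, giving $D(t_k)=o(m_k^2)$ and hence $a_1b_1+a_2b_2=0$. The only minor difference is the local bound: the paper gets it explicitly by comparison, $W(t_k+s)\le m_k\cosh(\dd s)$ from $W''\le\dd^2W$, whereas you use the Harnack/Gronwall bound from the proof of Lemma \ref{lemma2-8}; both work.
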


\begin{proof}
The proof is similar to that of Lemma \ref{lemma2-8}.
Since we assume $W(t)\not\to0$ as $t\to+\infty$, it follows from \eqref{eq4-2} that there are local minimum points
$t_k\to+\infty$ of $W(t)$ such that
$$
        m_k:=W(t_k)\to0,\qquad W'(t_k)=0.
$$
Now we classify the profile near $t_k$. Recalling \eqref{eq-W} that $W''-\delta^2W\le0$ for $t$ large, the comparison formula yields
\be\label{eq4-60}
        W(t_k+s)\le m_k\cosh(\delta s)=m_k\frac{e^{\delta s}+e^{-\delta s}}{2},\quad\text{for every fixed $s$.}
\ee
Hence the normalized functions
$Z_{i,k}(s)=\frac{\bar\omega_i(t_k+s)}{m_k}$
are locally uniformly bounded. Then up to a subsequence, $Z_{i,k}\to Z_i\geq 0$ uniformly in $C_{loc}^2(\mathbb R)$, where $Z_i$ satisfies $ Z_i''-\delta^2Z_i=0$ and so
$$
        Z_i(s)=a_ie^{\delta s}+b_ie^{-\delta s},
        \qquad a_i,b_i\ge0.
$$
From
$$
        Z_1(0)+Z_2(0)=1,\qquad
        Z_1'(0)+Z_2'(0)=0,
$$
we obtain
$a_1+a_2=b_1+b_2=\frac12.
$
 Again $D_\infty=0$ and \eqref{eq3-15} imply $\frac{D(t_k)}{m_k^2}=O(e^{-t_k})\to 0$,  then by \eqref{eq3-12}, we get
$$
        \sum_{i=1}^2
        \left(Z_i'(0)^2-\delta^2Z_i(0)^2\right)=0,
$$
so the same argument as Lemma \ref{lemma2-8} implies $(a_1,a_2,b_1,b_2)=(0,\frac12,\frac12,0)$ or $(\frac12,0,0,\frac12)$, namely \eqref{eq4-3} holds.
The proof is complete.
\end{proof}

We now set up the contradiction argument used in the sequel. Suppose that $W(t)\not\to0$ as $t\to+\infty$. Recalling \eqref{eq-W} that
\begin{equation}\label{eq-W1}C_1W^{2^\ast-1}\le -(W''-\dd^2W)\le C_2W^{2^\ast-1},\quad\text{for $t> T_0$ large}.\end{equation}
Fix a small number $\varepsilon>0$ such that 
\be\label{eq4-5}
C_2\varepsilon^{2^*-2}\le \frac{\delta^2}{2},\quad \varepsilon<\frac12\limsup_{t\to+\infty}W(t).
\ee
By Lemma~\ref{lemma3-2}, there
exist local minimum points $t_k\to+\infty$ with
$$
\varepsilon>m_k:=W(t_k)\to0,\qquad W'(t_k)=0,
$$
namely $t_k\in\{t:W(t)<\varepsilon\}$. It follows from \eqref{eq4-5} that for $k$ large, the corresponding left and right exit times 
$$
\tau_k^-:=\sup\{t<t_k: W(t)=\varepsilon\},\qquad
\tau_k^+:=\inf\{t>t_k: W(t)=\varepsilon\}
$$
are well-defined and satisfies
\begin{equation}\label{eq4-7}
W(\tau_k^-)=W(\tau_k^+)=\varepsilon,\ \
\text{and}\ \ W(t)<\varepsilon,\ \ t\in(\tau_k^-,\tau_k^+).
\end{equation}
On each such interval,
$$
        W''\ge \delta^2W-C_2W^{2^*-1}
        \ge \frac{\delta^2}{2}W>0,
$$
so
\be\label{eq4-8}
        W'(t)<0\quad \text{on }[\tau_k^-,t_k),
        \qquad
        W'(t)>0\quad \text{on }(t_k,\tau_k^+].
\ee
Thus, $t_k$ is the unique local minimum point of $W$ on the component $(\tau_k^-, \tau_k^+)$ of $\{W<\varepsilon\}$.
In particular, this implies
\be\label{eq4-060}
t_{k-1}<\tau_{k-1}^+<\tau_k^-<t_k\quad\text{for $k$ large.}
\ee

The following lemma shows that at every exit, one of the two exponents will be significantly larger than the other.

\begin{lemma}\label{lemma3-3}
Under the assumptions above, 
\be\label{eq4-13}
\lim_{k\to+\infty}\min\{\bar\omega_1(\tau_k^+),
\bar\omega_2(\tau_k^+)\}=0,
\ee
\be\label{eq4-14}
\lim_{k\to+\infty}\min\{\bar\omega_1(\tau_k^-),
\bar\omega_2(\tau_k^-)\}=0.
\ee
\end{lemma}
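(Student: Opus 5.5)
We argue by contradiction, using the Pohozaev-type quantity $D(t)$ from \eqref{eq3-15} together with $D_\infty=0$. Suppose \eqref{eq4-13} fails. Then along a subsequence both $\bar\omega_1(\tau_k^+)\ge c>0$ and $\bar\omega_2(\tau_k^+)\ge c>0$, while $\bar\omega_1(\tau_k^+)+\bar\omega_2(\tau_k^+)=\varepsilon$. The case \eqref{eq4-14} is symmetric, working with $\tau_k^-$ and reversing time.

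\textbf{Step 1: a limiting orbit.} Shift time by $\tau_k^+$. By \eqref{eq3-2}, \eqref{eq3-3-2} and \eqref{eq3-17}, up to a subsequence $\bar\omega_i(\cdot+\tau_k^+)\to v_i\ge0$ in $C^2_{loc}(\mathbb R)$. The limit $(v_1,v_2)$ solves the autonomous system \eqref{eq3-9} with energy $K=D_\infty=0$. It also satisfies $v_1(0),v_2(0)\ge c$ and $v_1(0)+v_2(0)=\varepsilon$.

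\textbf{Step 2: the backward behaviour of the limit.} Since $W<\varepsilon$ on $(\tau_k^-,\tau_k^+)$, we need $\tau_k^+-t_k\to+\infty$. This follows from Lemma \ref{lemma3-2}: $W(t_k+s)/m_k\to\cosh(\delta s)$ locally uniformly, so $W(t_k+s)\to0$ for every fixed $s$, whereas $W(\tau_k^+)=\varepsilon$. Moreover, on $(t_k,\tau_k^+)$ we have $W'>0$ and $W''\ge\frac{\delta^2}{2}W$. Passing to the limit, $V:=v_1+v_2$ satisfies $V\le\varepsilon$ and $V'\ge0$ on $(-\infty,0]$. Hence $V$ has a limit as $s\to-\infty$. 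That limit must be an equilibrium value of the limit ODE with energy $0$ and sum $\le\varepsilon$, and the smallness of $\varepsilon$ in \eqref{eq4-5} forces it to be $0$. So $(v_1,v_2)\to(0,0)$ as $s\to-\infty$.

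\textbf{Step 3: the unstable-manifold structure at the origin.} I expect this step to be the main obstacle. The origin is hyperbolic for \eqref{eq3-9}, with eigenvalues $\pm\delta$ in each component. Near $0$ the coupling terms are of order $|v|^{2^*-1}$, so as $s\to-\infty$ each $v_i=a_ie^{\delta s}(1+o(1))$ and $v_i'=\delta v_i(1+o(1))$. Consequently neither component can be exponentially smaller in a way that escapes the linear picture. The target is to show that on the branch of the unstable manifold entering $\{V\le\varepsilon\}$, one component must vanish identically. The route is to combine the asymptotics at $t_k$ with the decomposition of the energy.

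Lemma \ref{lemma3-2} tells us that at $t_k$ one component is growing and the other is decaying (profile $\frac12e^{\delta s}$ versus $\frac12e^{-\delta s}$). Write $f_i=\frac12(\bar\omega_i')^2-\frac{\delta^2}{2}\bar\omega_i^2$, which is of size $O(m_k^2)$ near $t_k$. The equations \eqref{eq3-17} give $f_i'=-\bar\omega_i'(\mu_i\bar\omega_i^{2^*-1}+\beta\bar\omega_i^{2^*/2-1}\bar\omega_{3-i}^{2^*/2})(1+O(e^{-t}))$.

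Assume the decaying component is $\bar\omega_2$. I would first show that $\bar\omega_2$ stays comparable to its decaying profile: by a comparison argument using $\bar\omega_2''\ge\frac{\delta^2}{2}\bar\omega_2$ while $W<\varepsilon$, $\bar\omega_2$ cannot turn up again until $W$ reaches order $\varepsilon$. Given this, $\bar\omega_2(\tau_k^+)\le Cm_k e^{-\delta(\tau_k^+-t_k)}+\text{(forcing)}$. The forcing comes from the coupling $\beta\bar\omega_2^{2^*/2-1}\bar\omega_1^{2^*/2}$ and the $O(e^{-t})$ errors. For $N\ge5$ its contribution is controlled by Duhamel, since $\bar\omega_2^{2^*/2-1}$ carries a positive power of $\bar\omega_2$. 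For $N=4$ the coupling is $\beta\bar\omega_2\bar\omega_1^2$, which is linear in $\bar\omega_2$, so $\bar\omega_2$ satisfies $\bar\omega_2''=(\delta^2-\beta\bar\omega_1^2+\dots)\bar\omega_2$ with a positive coefficient once $\varepsilon$ is small.

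In both cases the ratio $\bar\omega_2/\bar\omega_1$ is nonincreasing along $(t_k,\tau_k^+)$ up to errors $O(\varepsilon^{2^*-2})$. Since this ratio tends to $0$ as we move away from $t_k$, we get $\bar\omega_2(\tau_k^+)\to0$. This contradicts the assumption that $\bar\omega_2(\tau_k^+)\ge c$. The delicate point is making the ratio monotonicity quantitative uniformly in $k$ despite the $O(e^{-t})$ errors; I would handle it via the Wronskian-type quantity $\bar\omega_1'\bar\omega_2-\bar\omega_1\bar\omega_2'$.
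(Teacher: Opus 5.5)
Your proposal has a genuine gap in Step 3, and Steps 1--2 cannot supply what is missing. The limit orbit lies in the unstable manifold of the origin of \eqref{eq3-9}, where $K=0$ holds automatically. Nothing in the autonomous system forces one of $a_1,a_2$ to vanish: synchronized zero-energy solutions $(k_1w,k_2w)$, when they exist, have both components positive and comparable up to $v_1+v_2=\varepsilon$. So the target you set yourself is false for the autonomous system. The contradiction must come from quantitative information linking the valley $t_k$ to the exit $\tau_k^+$.

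\textbf{Why Step 3 fails.} The comparison you invoke points the wrong way. The inequality $\bar\omega_2''\ge\frac{\delta^2}{2}\bar\omega_2$ is a convexity statement and gives lower bounds, so it cannot stop $\bar\omega_2$ from turning up. For $N\ge5$ it is not even true when $\bar\omega_2\ll\bar\omega_1$, because $\bar\omega_2^{\frac{2^*}{2}-2}\bar\omega_1^{\frac{2^*}{2}}$ is unbounded. Write $G_i:=\delta^2\bar\omega_i-\bar\omega_i''\ge0$ and $P_2:=\bar\omega_2'+\delta\bar\omega_2$. The danger is neither the forcing $G_2$, which has the favorable sign, nor the $O(e^{-t})$ errors. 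It is the unstable homogeneous mode: at $t_k$, $\bar\omega_2$ carries the growing-mode coefficient $P_2(t_k)/(2\delta)$. The valley profile only makes this $o(m_k)$, and it is amplified by $e^{\delta(\tau_k^+-t_k)}$. Your bound $\bar\omega_2(\tau_k^+)\le Cm_ke^{-\delta(\tau_k^+-t_k)}+(\text{forcing})$ drops exactly this term. Knowing only $\tau_k^+-t_k\to\infty$, you cannot show $o(m_k)e^{\delta(\tau_k^+-t_k)}\to0$.

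The Wronskian route does not help either. One has $J'=\bar\omega_1G_2-\bar\omega_2G_1$, which for $N=4$ and small ratio is $\approx(\beta-\mu_1)\bar\omega_1^3\bar\omega_2<0$ when $\beta<\mu_1$. So there is no invariant region in that case. A genuine one exists only for $N\ge5$, or $N=4$ with $\beta>\max\{\mu_1,\mu_2\}$, which is the paper's closing Remark. Moreover, an additive error $O(\varepsilon^{2^*-2})$ with $\varepsilon$ fixed could never give $\bar\omega_2(\tau_k^+)\to0$.

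\textbf{The missing ingredient} is a \emph{sharp upper} bound on the exit time, namely \eqref{eq4-11}. Multiply $W''\ge\delta^2W-C_2W^{2^*-1}$ by $2W'>0$ on $(t_k,\tau_k^+)$ and integrate from $t_k$, where $W'=0$. Using \eqref{eq4-5} to treat the nonlinear part perturbatively, you get $\tau_k^+-t_k\le\frac1\delta\ln\frac{2\varepsilon}{m_k}+C_\varepsilon$, that is, $e^{\delta(\tau_k^+-t_k)}\le C_\varepsilon/m_k$. The exact constant $1/\delta$ matters: the crude consequence of $W''\ge\frac{\delta^2}{2}W$ only gives $e^{\delta(\tau_k^+-t_k)}\lesssim(\varepsilon/m_k)^{\sqrt2}$, which is not enough.

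With this bound, the paper uses $P_2>0$ from \eqref{eq02-9} and $(e^{-\delta t}P_2)'=-e^{-\delta t}G_2$. If both components are $\ge a_0$ at $\tau_k^+$, this gives $o(m_k)=P_2(t_k)\ge\int_{\tau_k^+}^{\tau_k^++\ell}e^{-\delta(s-t_k)}G_2\,ds\ge c\,e^{-\delta(\tau_k^+-t_k)}\ge c'm_k$, a contradiction.

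Equivalently, in your own framework: since $G_2\ge0$, Duhamel gives $\bar\omega_2(t_k+s)\le\frac{P_2(t_k)}{2\delta}e^{\delta s}+m_ke^{-\delta s}$ for $s\ge0$. At $s=\tau_k^+-t_k$ this yields $\bar\omega_2(\tau_k^+)\le o(m_k)\,C_\varepsilon/m_k+m_k\to0$. This works for every $N$ and all $\mu_i,\beta>0$, with no limiting orbit or ratio monotonicity needed.
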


\begin{proof}
We first consider the right exit $\tau_k^+$ and argue by contradiction. Recall $\bar\omega_1(\tau_k^+)+\bar\omega_2(\tau_k^+)=W(\tau_k^+)=\varepsilon$.  We assume the existence of $a_0\in (0, \varepsilon/2)$ such that up to a subsequence,
$$
\bar\omega_1(\tau_k^+)\ge a_0,\qquad \bar\omega_2(\tau_k^+)\ge a_0
$$
for all large $k$. Since $\bar\omega_1'$ and $\bar\omega_2'$ are bounded by \eqref{eq3-3-2},
there exist constants $\ell>0$ and $a_1>0$, independent of $k$, such that
\be\label{eq4-9}
\bar\omega_1(t)\ge a_1,\quad \bar\omega_2(t)\ge a_1,\qquad \forall\, t\in[\tau_k^+,\tau_k^++\ell].
\ee

We first estimate the distance from the valley $t_k$ to the right exit
$\tau_k^+$. Recall \eqref{eq-W1} that
$$
W''\ge \delta^2W-C_2W^{2^*-1}.
$$
Multiplying this inequality by $2W'>0$ and integrating from $t_k$ to $t\in(t_k,\tau_k^+]$, we
obtain
$$
W'(t)^2\ge\delta^2\bigl(W(t)^2-m_k^2\bigr)-\frac{2C_2}{2^\ast}\bigl(W(t)^{2^*}-m_k^{2^*}\bigr),\quad\text{for }t\in(t_k,\tau_k^+].
$$
Since $\varepsilon$ satisfies \eqref{eq4-5}, the mean value theorem gives 
$$
\frac{2C_2}{2^\ast}(y^{2^*}-m^{2^*})\le\frac{\delta^2}{2}(y^2-m^2),\quad\forall\,0<m<y\le\varepsilon.
$$
Thus, using $y=W(t)$ as a variable,
\begin{align}\label{eq4-10}
\tau_k^+-t_k=\int_{m_k}^{\varepsilon}\frac{dy}{W'}
\le&\int_{m_k}^{\varepsilon}
\frac{dy}{\sqrt{\delta^2(y^2-m_k^2)
-\frac{2C_2}{2^\ast}(y^{2^*}-m_k^{2^*})}}\nonumber\\
\le&\frac1\delta\int_{m_k}^{\varepsilon}
\frac{dy}{\sqrt{y^2-m_k^2}}
+C\int_{m_k}^{\varepsilon}
\frac{y^{2^*}-m_k^{2^*}}{(y^2-m_k^2)^{3/2}}\,dy,
\end{align}
where we have used the inequality 
$$
(A-B)^{-1/2}\le A^{-1/2}+\sqrt{2}\ A^{-3/2}B,\quad\text{for}\;0\leq B\leq A/2.
$$
Then the first integral 
$$
\frac1\delta\int_{m_k}^{\varepsilon}
\frac{dy}{\sqrt{y^2-m_k^2}}=
\frac1\delta\operatorname{arcosh}\frac{\varepsilon}{m_k}\le\frac1\delta\ln\frac{2\varepsilon}{m_k},
$$
and the second integral can be estimated as follows (Set
$y=m_k z$ and use $m_k<\varepsilon$):
{\allowdisplaybreaks
\begin{align*}
\int_{m_k}^{\varepsilon}
\frac{y^{2^*}-m_k^{2^*}}{(y^2-m_k^2)^{3/2}}\,dy
&=m_k^{2^*-2}\int_1^{\varepsilon/m_k}
\frac{z^{2^*}-1}{(z^2-1)^{3/2}}\,dz\\
&\leq m_k^{2^*-2}\left(\int_1^{2}
C(z-1)^{-\frac12}\,dz+\int_2^{\varepsilon/m_k}
\frac{z^{2^*}}{(z^2/2)^{3/2}}\,dz\right)\\
&\leq m_k^{2^*-2}\big(C+C(\varepsilon/m_k)^{2^\ast-2}\big)\leq C\varepsilon^{2^\ast-2}.
\end{align*}
}%
 Hence
$$
\tau_k^+-t_k\le
\frac1\delta\ln\frac{2\varepsilon}{m_k}+C_\varepsilon.
$$
Equivalently,
\be\label{eq4-11}
e^{-\delta(\tau_k^+-t_k)}\ge C_\varepsilon m_k.
\ee

We now use the valley profile from Lemma \ref{lemma3-2}. After passing to a further subsequence, assume first that
\be\label{eq4-12}
\frac{1}{m_k}(\bar\omega_1,\bar\omega_2)(t_k+s)
\longrightarrow
\left(\frac12e^{\delta s},\frac12e^{-\delta s}\right)
\qquad\text{in }C^2_{\mathrm{loc}}(\mathbb R).
\end{equation}

Write the second averaged equation in the form
$$
\bar\omega_2''-\delta^2\bar\omega_2=-G_2(t),
$$
where
$$
G_2(t):=(1+O(e^{-t}))
\left(\mu_2\bar\omega_2^{2^*-1}
+\beta \bar\omega_2^{\frac{2^*}{2}-1}\bar\omega_1^\frac{2^*}{2}\right)>0
$$
for large $t$. Recalling \eqref{eq02-9}, we define
$$
P_2(t):=\bar\omega_2'(t)+\delta \bar\omega_2(t)>0.
$$
Then \eqref{eq4-12} implies $P_2(t_k)=o(m_k)$. Clearly,
$$
\frac{d}{dt}\left(e^{-\delta t}P_2(t)\right)
=e^{-\delta t}(\bar\omega_2''-\delta^2\bar\omega_2)
=-e^{-\delta t}G_2(t).
$$
Integrating over $[t_k, \tau_k^++\ell]$, we obtain
\begin{align*}e^{-\delta t_k}P_2(t_k)&=e^{-\delta( \tau_k^++\ell)}P_2( \tau_k^++\ell)+\int_{t_k}^{ \tau_k^++\ell}e^{-\delta s}G_2(s)\,ds\\
&\geq \int_{t_k}^{ \tau_k^++\ell}e^{-\delta s}G_2(s)\,ds\geq \int_{\tau_k^+}^{ \tau_k^++\ell}e^{-\delta s}G_2(s)\,ds,\end{align*}
in particular,
$$
P_2(t_k)\ge\int_{\tau_k^+}^{\tau_k^++\ell}
e^{-\delta(s-t_k)}G_2(s)\,ds.
$$
On $[\tau_k^+,\tau_k^++\ell]$ we have $\bar\omega_1,\bar\omega_2\ge a_1$, and hence
$G_2(s)\ge c_1>0$.
Therefore, together with \eqref{eq4-11}, we obtain
$$
o(m_k)=P_2(t_k)\ge 
c_1\int_{\tau_k^+}^{\tau_k^++\ell} e^{-\delta(s-t_k)}\,ds
=c_2 e^{-\delta(\tau_k^+-t_k)}\ge c_3m_k,
$$
for all large $k$, which is a contradiction. 
For the other case
$$
\frac{1}{m_k}(\bar\omega_1,\bar\omega_2)(t_k+s)
\longrightarrow
\left(\frac12e^{-\delta s},\frac12e^{\delta s}\right)\qquad\text{in }C^2_{\mathrm{loc}}(\mathbb R),
$$
a similar argument gives
$o(m_k)=\bar\omega_1'(t_k)+\delta \bar\omega_1(t_k)\geq c_4m_k$ for large $k$, also a contradiction. This proves \eqref{eq4-13}.

Clearly, \eqref{eq4-14} can be proved by the same argument after the change of variable $s\mapsto -s$ around $t_k$, and we omit the details here. 
\end{proof}

Now we are ready to prove Theorem \ref{thm3-1}. We will treat $N\geq 5$ and $N=4$ separately.


\begin{proof}[Proof of Theorem \ref{thm3-1} for $N\geq 5$]
Assume by contradiction that $W(t)\not\to0$ as $t\to+\iy$. By \eqref{eq4-13}, we may assume, for instance, that after passing to a subsequence,
$\bar\omega_2(\tau_k^+)\to0$.
Then necessarily
$\bar\omega_1(\tau_k^+)\to\varepsilon$.
Define
$$
\psi_{i,k}(s):=\bar\omega_i(\tau_k^++s),\qquad i=1,2.
$$
Like before, after passing to a further subsequence,
$$
\psi_{i,k}\to \psi_i\geq 0
\qquad\text{in } C^2_{\mathrm{loc}}(\mathbb R),
$$
where $(\psi_1, \psi_2)$ solves the system \eqref{eq3-9} and satisfies
$\psi_2(0)=\psi_2'(0)=0$, $\psi_1(0)=\varepsilon$. Then the maximum principle or a standard ODE analysis implies $\psi_2\equiv 0$.

Since $\frac{2^*}{2}-1<1$ for $N\geq 5$, we may
choose $\rho>0$ such that
$$
\delta^2 y\le
\frac{\beta}{8}\left(\frac{\varepsilon}{4}\right)^\frac{2^*}{2} y^{\frac{2^*}{2}-1},
\qquad \forall\, y\in [0,\rho].
$$
Choose $\ell>0$ so small that $\psi_1(s)\ge \frac{\varepsilon}{2}$ for $|s|\le \ell$.


Then for $k$ large,
$\psi_{1,k}(s)\ge \frac{\varepsilon}{4}$ 
and $\psi_{2,k}(s)\leq \rho$ for $|s|\le \ell$, so the equation for
$\psi_{2,k}$ gives
$$
\psi_{2,k}''=
\delta^2\psi_{2,k}-(1+O(e^{-\tau_k^+-s}))
\left(\mu_2\psi_{2,k}^{2^*-1}
+\beta\psi_{2,k}^{\frac{2^*}{2}-1}\psi_{1,k}^\frac{2^*}{2}\right)
\le-c_2\psi_{2,k}^{\frac{2^*}{2}-1}
$$
on $[-\ell,\ell]$.

We now use the following elementary one-dimensional observation. If
$0<q<1$, $y(s)>0$ on $[-\ell,\ell]$, and
$y''(s)\le -c y(s)^q$, then
$$
\sup_{s\in[-\ell,\ell]}y(s)\ge c_0,
$$
where $c_0>0$ depends only on $c,q,\ell$. To prove this assertion, we take
$$
\varphi(s)=\cos\frac{\pi s}{2\ell}.
$$
Then $\varphi>0$ on $(-\ell,\ell)$, $\varphi(\pm\ell)=0$, and
$\varphi''=-\lambda\varphi$,
where $\lambda=\left(\frac{\pi}{2\ell}\right)^2$.
Multiplying the inequality $y''\le -cy^q$ by $\varphi$ and integrating by parts gives
$$
c\int_{-\ell}^{\ell}y^q\varphi
\le-\int_{-\ell}^{\ell}y''\varphi=\lambda\int_{-\ell}^{\ell}y\varphi-\frac{\pi}{2\ell}(y(\ell)+y(-\ell))\le\lambda\int_{-\ell}^{\ell}y\varphi.
$$
Denote $M=\sup_{[-\ell,\ell]}y$, then by $q-1<0$ we get
$$
cM^{q-1}\int_{-\ell}^{\ell} y\varphi
\leq c\int_{-\ell}^{\ell}y^q\varphi\le\lambda\int_{-\ell}^{\ell} y\varphi,
$$
and hence
$M\ge \left(\frac{c}{\lambda}\right)^{1/(1-q)}$.
This proves the observation.

Applying this observation to $y=\psi_{2,k}$ with $q=\frac{2^*}{2}-1$, we obtain
$$
\sup_{[-\ell,\ell]}\psi_{2,k}\ge c_0>0,\quad\text{for all $k$ large},
$$
which contradicts with the uniform convergence $\psi_{2,k}\to0$ on
$[-\ell,\ell]$.

Therefore, $W(t)\to 0$ as $t\to+\iy$. Applying Lemma \ref{lemma3-1}, we conclude that the isolated singularity $0$ is removable for both $u$ and $v$. This completes the proof of Theorem \ref{thm3-1} for $N\geq 5$.
\end{proof}

We now treat the remaining case $N=4$ under a strong coupling assumption.


\begin{proof}[Proof of Theorem \ref{thm3-1} for $N=4$]
For $N=4$, we further assume 
$$ \beta>\min\{\mu_1,\mu_2\}\quad \text{or}\quad \beta=\mu_1=\mu_2.$$
Clearly the proof for $N\geq 5$ does not work, and here we need to develop different ideas.
Again we argue by contradiction and assume that $W(t)\not\to0$ as $t\to+\iy$. By
Lemma~\ref{lemma3-2}, there exists a sequence of local minimum points $t_k\to+\infty$ such that
$$
m_k:=W(t_k)\to0,\qquad W'(t_k)=0,
$$
and, after passing to a subsequence,
$$
\frac1{m_k}(\bar\omega_1,\bar\omega_2)(t_k+s)\to
\left(\frac12e^s,\frac12e^{-s}\right)\;\text{or}\;\left(\frac12e^{-s},\frac12e^s\right)\quad\text{in $C^2_{\rm loc}(\mathbb R)$. }
$$
In both cases,
\begin{equation}\label{eq4-20}
\frac{\bar\omega_2(t_k)}{\bar\omega_1(t_k)}\to1.
\end{equation}

Since $N=4$, the averaged equations are
\begin{equation}\label{eq3-36-1}
\begin{cases}
\bar\omega_1''-\bar\omega_1+
(1+\varepsilon_1(t))\left(\mu_1\bar\omega_1^3
+\beta\bar\omega_1\bar\omega_2^2\right)=0,\\
\bar\omega_2''-\bar\omega_2+
(1+\varepsilon_2(t))\left(\mu_2\bar\omega_2^3
+\beta\bar\omega_2\bar\omega_1^2\right)=0,
\end{cases}
\end{equation}
where
$\varepsilon_i(t)=O(e^{-t})$.
Now we split the proof into three cases.

\medskip
\noindent{\bf Case 1.} Assume that
$\mu_1\ge\mu_2$ and $\beta>\mu_2$.

In this case, we
set
$$
r(t):=\frac{\bar\omega_2(t)}{\bar\omega_1(t)},
\qquad L(t):=\log r(t).
$$
A direct computation gives
\be\label{eq4-16}
L''+\left(\frac{\bar\omega_1'}{\bar\omega_1}+
\frac{\bar\omega_2'}{\bar\omega_2}\right)L'
=\frac{\bar\omega_2''}{\bar\omega_2}-
\frac{\bar\omega_1''}{\bar\omega_1},
\ee
with
$$
\begin{aligned}
\frac{\bar\omega_2''}{\bar\omega_2}-
\frac{\bar\omega_1''}{\bar\omega_1}
=&
\bar \omega_1^2\Big[\bigl((1+\varepsilon_1)\mu_1
-(1+\varepsilon_2)\beta\bigr) \\
&+
\bigl((1+\varepsilon_1)\beta
-(1+\varepsilon_2)\mu_2\bigr)r^2\Big] .
\end{aligned}
$$
Then by $\beta>\mu_2$ and $\varepsilon_i(t)\to 0$ as $t\to+\infty$, there exist $q_+>1$ and $T>T_0$ large such that
\be\label{eq4-17}
t\geq T\;\text{and}\;r(t)\ge q_+\quad\Longrightarrow\quad
\frac{\bar\omega_2''}{\bar\omega_2}(t)
-\frac{\bar\omega_1''}{\bar\omega_1}(t)>0.
\ee

We first record the corresponding invariance property. We claim that,  if for some $T_1\ge T$, $r(T_1)>q_+$, $L'(T_1)>0,$ then\be\label{eq4-18}
r(t)>q_+,\qquad L'(t)>0\qquad\text{for all }t\ge T_1.
\ee
Indeed, if $L'$ reaches the first zero at some $s_0>T_1$, then
$L'(s)=\frac{r'(s)}{r(s)}>0$ on $[T_1,s_0)$ and hence $r(s_0)>q_+$,
$L'(s_0)=0$ and $L''(s_0)\le0$. But \eqref{eq4-16}-\eqref{eq4-17} gives
$$
L''(s_0)=\frac{\bar\omega_2''}{\bar\omega_2}(s_0)
-\frac{\bar\omega_1''}{\bar\omega_1}(s_0)>0,
$$
a contradiction. So this forward invariance property holds.

Similarly, we also have a backward invariance property: If for some $T_1> T$, $r(T_1)>q_+,\ L'(T_1)<0,$
then
\be\label{eq4-19}
r(t)>q_+,\qquad L'(t)<0\qquad\text{for all }T\le t\le T_1.
\ee
This is the same argument applied backward in time. More precisely, if $L'$ reaches zeros when one goes backward from $T_1$ to $T$, then at the first time $T\leq s_0<T_1$ we have $r(s_0)>q_+$, $L'(s_0)=0$ and $L''(s_0)\le0$, while \eqref{eq4-16}-\eqref{eq4-17} gives $L''(s_0)>0$, a contradiction again.

We now take the valley sequence $t_k$ in Lemma~\ref{lemma3-2} sufficiently sparse so
that, for the fixed number $S$ chosen below, there are still earlier and later valleys
outside $(t_k-S,t_k+S)$. This does not change the valley profile, after passing
to a subsequence.

Choose $S>0$ so large that $e^{2S}>q_+.$
Suppose first that the valley profile at $t_k$ is
$$
\frac1{m_k}(\bar\omega_1,\bar\omega_2)(t_k+s)
\longrightarrow\left(\frac12e^s,\frac12e^{-s}\right).
$$
Then
$$
r(t_k-S)\to e^{2S}>q_+,\quad L'(t_k-S)\to -2,\quad\text{as }t_k\to+\infty.
$$
Thus, for $k$ sufficiently large,
$r(t_k-S)>q_+,\ L'(t_k-S)<0.$
Then by the backward invariance property just proved, we have
$$
r(t)>q_+>1\qquad\text{for all }T\le t\le t_k-S.
$$
This contradicts with the existence of earlier valleys $t_j<t_k-S$, because \eqref{eq4-20} gives
$r(t_j)=\frac{\bar\omega_2(t_j)}{\bar\omega_1(t_j)}\to1$
along such valleys.

Suppose next that the valley profile at $t_k$ is
$$
\frac1{m_k}(\bar\omega_1,\bar\omega_2)(t_k+s)
\longrightarrow\left(\frac12e^{-s},\frac12e^s\right).
$$
Then
$$
r(t_k+S)\to e^{2S}>q_+,\quad L'(t_k+S)\to2,\quad\text{as}\;t_k\to+\infty,
$$
and so for $k$ sufficiently large,
$r(t_k+S)>q_+,\ L'(t_k+S)>0.$
Then the forward invariance property gives
$$
r(t)>q_+>1\qquad\text{for all }t\ge t_k+S.
$$
Again this contradicts with the existence of later valleys $t_j>t_k+S$ with
$r(t_j)=\frac{\bar\omega_2(t_j)}{\bar\omega_1(t_j)}\to1.$

Therefore, we have proved $W(t)\to 0$ as $t\to+\iy$ for Case 1.

\medskip
\noindent{\bf Case 2.} Assume that
$\mu_2\ge\mu_1$ and $\beta>\mu_1$.

This case is symmetric to Case 1. We only need to revise the definition of $r(t)$ to be $r(t):=\frac{\bar\omega_1(t)}{\bar\omega_2(t)}$, then the rest arguments are the same.

%

\medskip
\noindent{\bf Case 3.} Assume that $\mu_1=\mu_2=\beta.$

In this case, Theorem \ref{thm3-1} is already covered by \cite[Theorem 1.5]{GKS}.
Here, we provide an alternative proof.
We use the exact angular momentum of the original  cylindrical functions:
$$
\mathcal J(t):=\int_{\mathbb S}
\left(\omega_1(\omega_2)_t-\omega_2(\omega_1)_t\right)\,d\theta.
$$
A direct computation from \eqref{eq3-10} gives
{\allowdisplaybreaks
\begin{align*}
\mathcal J'(t)=&\int_{\mathbb S}
\left(\omega_1(\omega_2)_{tt}-\omega_2(\omega_1)_{tt}\right)\,d\theta  \\
=&\int_{\mathbb S}
\left[\omega_1\left(-\Delta_\theta\omega_2+\omega_2-\beta\omega_2(\omega_1^2+\omega_2^2)\right)\right.\\
&\qquad\left.-\omega_2
\left(-\Delta_\theta\omega_1+\omega_1-\beta\omega_1(\omega_1^2+\omega_2^2)\right)\right]\,d\theta\\
=&\int_{\mathbb S}\left(-\omega_1\Delta_\theta\omega_2
+\omega_2\Delta_\theta\omega_1\right)\,d\theta,
\end{align*}
}%
and the integration by parts shows that
$$
\int_{\mathbb S}\omega_1\Delta_\theta\omega_2\,d\theta
=-\int_{\mathbb S}\nabla_\theta\omega_1\cdot\nabla_\theta\omega_2\,d\theta
=\int_{\mathbb S}\omega_2\Delta_\theta\omega_1\,d\theta .
$$
Thus $\mathcal J'=0$, namely $\mathcal J$ is a constant.

On the other hand, by the asymptotic estimates \eqref{eq3-3-1}-\eqref{eq3-3-3},
$$
\mathcal J(t_k)=|\mathbb S|\left(\bar\omega_1\bar\omega_2'
-\bar\omega_2\bar\omega_1'\right)(t_k)+o(m_k^2).
$$
If the valley profile is
$$
\frac1{m_k}(\bar\omega_1,\bar\omega_2)(t_k+s)
\longrightarrow\left(\frac12e^s,\frac12e^{-s}\right),
$$
then
$$
\mathcal J(t_k)=-\frac{|\mathbb S|}{2}m_k^2+o(m_k^2).
$$
If the valley profile is
$$
\frac1{m_k}(\bar\omega_1,\bar\omega_2)(t_k+s)
\longrightarrow\left(\frac12e^{-s},\frac12e^s\right).
$$
then
$$
\mathcal J(t_k)=\frac{|\mathbb S|}{2}m_k^2+o(m_k^2).
$$
Since $m_k\to0$, we have $\mathcal J(t_k)\to0$, so $\mathcal J(t)\equiv 0$. But $\mathcal J(t_k)=\pm\frac{|\mathbb S|}{2}m_k^2+o(m_k^2)\ne0$ for $k$ large, a contradiction again.

In conclusion, 
we have proved  $W(t)\to 0$ as $t\to+\iy$ for all three cases. Applying Lemma~\ref{lemma3-1} again, we conclude that the singularity $0$ is removable for both $u$ and $v$. This completes the proof of Theorem \ref{thm3-1} for $N=4$.
\end{proof}

%

\begin{remark}
Under the stronger assumptions
$$
N\ge5\qquad\text{or}\qquad N=4,\quad \beta>\max\{\mu_1,\mu_2\},
$$
Lemma~\ref{lemma3-3} also has a different proof based on the ratio argument. We explain this alternative proof because it is useful for understanding the mechanism behind the exclusion of a return from a valley to a balanced exit.

First we discuss the right exit $\tau_k^+$. Using the same notations as Case 1 in the proof of Theorem \ref{thm3-1} for $N=4$, a direct computation gives
$$
\begin{aligned}
\frac{\bar\omega_2''}{\bar\omega_2}-
\frac{\bar\omega_1''}{\bar\omega_1}
=&(1+O(e^{-t}))\mu_1\bar\omega_1^{2^*-2}
-(1+O(e^{-t}))\mu_2\bar\omega_2^{2^*-2}\\
&+\beta
\left((1+O(e^{-t}))\bar\omega_1^{\frac{2^*}{2}-2}\bar\omega_2^\frac{2^*}{2}-(1+O(e^{-t}))\bar\omega_2^{\frac{2^*}{2}-2}\bar\omega_1^\frac{2^*}{2}\right),\\
=&\bar\omega_1^{2^*-2} H(t,r(t)),
\end{aligned}
$$
where
\begin{align*}
H(t,r):=&(1+O(e^{-t}))\mu_1-(1+O(e^{-t}))\mu_2 r^{2^*-2}\\
&+\beta(1+O(e^{-t}))r^\frac{2^*}{2}-\beta(1+O(e^{-t}))r^{\frac{2^*}{2}-2}.
\end{align*}
Since $0<{2^\ast-2}<\frac{2^\ast}{2}<2$ for $N\geq 5$ and $\beta>\max\{\mu_1,\mu_2\}$ for $N=4$,
 there exist large constants $q_+>1$ and $T>T_0$ such that
$$
t\geq T\quad\text{and}\quad
r(t)\ge q_+\quad\Longrightarrow H(t,r(t))>0.
$$
Consequently,  the following forward invariance property holds: 
If for some $T_1\ge T$, $r(T_1)>q_+$, $L'(T_1)>0,$ then$$
r(t)>q_+,\qquad L'(t)>0\qquad\text{for all }t\ge T_1.
$$

Now suppose that the conclusion of Lemma~\ref{lemma3-3} fails, namely  $\bar\omega_1(\tau_k^+),\bar\omega_2(\tau_k^+)\ge a_0$
up to a subsequence for some $a_0\in (0, \varepsilon/2)$. 
Since $W(\tau_k^+)=\varepsilon$, we have
$$
\frac{a_0}{\varepsilon}\le r(\tau_k^+)\le\frac{\varepsilon}{a_0}.
$$
We can take the constant $q_+$ larger so that
$q_+>\frac{\varepsilon}{a_0}$.

If the valley profile at $t_k$ is
$$
\frac1{m_k}(\bar\omega_1,\bar\omega_2)(t_k+s)\longrightarrow
\left(\frac12e^{-\delta s},\frac12e^{\delta s}\right),
$$
then we choose $S>0$ so large that $e^{2\delta S}>q_+>\frac{\varepsilon}{a_0}$. 
Clearly
$
r(t_k+S)\to e^{2\delta S}$, $L'(t_k+S)\to 2\delta,
$ so
for large $k$,
$$
r(t_k+S)>q_+,\qquad L'(t_k+S)>0.
$$
On the other hand, we note that $t_k+S<\tau_k^+$ for all large $k$. Indeed, if $\tau_k^+-t_k\le S$ along a subsequence, then the comparison estimate \eqref{eq4-60} and $m_k\to0$ give
$$
\varepsilon=W(\tau_k^+)\le m_k\cosh(\delta (\tau_k^+-t_k))\leq m_k\cosh(\delta S)\to0,
$$
a contradiction. Hence $t_k+S<\tau_k^+$, and
then by the forward invariance property, we get $r(\tau_k^+)>q_+>\frac{\varepsilon}{a_0}$, a contradiction.

If the valley profile at $t_k$ is
$$
\frac1{m_k}(\bar\omega_1,\bar\omega_2)(t_k+s)
\longrightarrow\left(\frac12e^{\delta s},\frac12e^{-\delta s}\right),
$$
we only need to revise the definition of $r(t)$ to be $r(t):=\frac{\bar\omega_1(t)}{\bar\omega_2(t)}$, then the rest arguments are the same,
which also gives a contradiction.

Therefore, the conclusion \eqref{eq4-13} holds for $\tau_k^+$. Again the proof of \eqref{eq4-14} for the left exit $\tau_k^-$ is identical after the change of variable $t-t_k$ by $-(t-t_k)$. This gives an alternative proof of Lemma~\ref{lemma3-3}.
\end{remark}

\subsection*{Acknowledgement} The research of Z. Chen is supported by National Key \text{R\&D} Program of China (Grant 2023YFA1010002) and NSFC (No. 12222109). The research of H. Yang is supported by NSFC (No. 12301140).

\subsection*{Data availability} No data was used for the research described in the article.

\subsection*{Conflict of interest} There is no conflict of interest.


\begin{thebibliography}{100}

\bibitem{Ado} J. Andrade, J. M. do \'{O}, 
Asymptotics for singular solutions to conformally invariant fourth order systems in the punctured ball.
{\it J. Differential Equations} {\bf 413} (2024), 190–239.

\bibitem{ADGW} W. Ao, A. DelaTorre, M. Gonz\'{a}lez, J. Wei, A gluing approach for the fractional Yamabe problem with isolated singularities, {\it J. Reine Angew. Math.} {\bf 763} (2020), 25–78.  

\bibitem{ACDFGW}W. Ao, H. Chan, A. DelaTorre, M. Fontelos, M. Gonz\'{a}lez, J. Wei, On higher dimensional singularities for the fractional Yamabe problem: a non-local Mazzeo-Pacard program, {\it Duke Math. J.,} {\bf 168} (2019), 3297–3411.

\bibitem{Aviles} P. Aviles, On isolated singularities in some nonlinear partial differential equations, {\it Indiana Univ. Math. J.,} {\bf 32}
(1983), 773–791.

\bibitem{CGS} L. Caffarelli, B. Gidas, J. Spruck, Asymptotic symmetry and local behavior of semilinear elliptic
equations with critical Sobolev growth, {\it Comm. Pure Appl. Math.,} {\bf 42} (1989), 271-297.

\bibitem{CJSX} L. Caffarelli, T. Jin, Y. Sire, J. Xiong, Local analysis of solutions of fractional semi-linear elliptic
equations with isolated singularities, {\it Arch. Ration. Mech. Anal.,} {\bf 213} (2014), 245–268.

\bibitem{COS}
R. Caju, J. M. do \'{O}, and A. Santos, Qualitative properties of positive singular solutions to nonlinear elliptic systems with critical exponent, {\it Ann. Inst. H. Poincar\'{e} Anal. Non Lin\'{e}aire,} {\bf 36} (2019), 1575–1601. 

\bibitem{CL1} C. Chen, C.-S. Lin, Local behavior of singular positive solutions of semilinear elliptic equations with Sobolev exponent, {\it Duke Math. J.,} {\bf 78} (1995), 315-334.

\bibitem{CL2} C. Chen, C.-S. Lin, Estimates of the conformal scalar curvature equation via the method of moving planes, {\it Comm. Pure Appl. Math.,} {\bf 50} (1997), 971-1017.

\bibitem{CL3} C. Chen, C.-S. Lin, On the asymptotic symmetry of singular solutions of the scalar curvature equations, {\it Math. Ann.,} {\bf 313} (1999), 229-245.

\bibitem{ChenLi} W. Chen, C. Li, Classification of positive solutions for nonlinear differential and integral systems with critical
exponents, {\it Acta Math. Scientia,} {\bf 29} (2009), 949-960.



\bibitem{ChenLin1} Z. Chen, C.-S. Lin, Removable singularity of positive solutions for a critical
elliptic system with isolated singularity, {\it Math. Ann.} {\bf 363} (2015), 501–523. 


\bibitem{CZ} Z. Chen, W. Zou, Positive least energy solutions and phase separation for coupled
  Schr\"{o}dinger equations with critical exponent, {\it Arch. Ration. Mech. Anal.,} {\bf 205} (2012), 515-551.



\bibitem{CZ2} Z. Chen, W. Zou, Positive least energy solutions and phase separation for coupled
  Schr\"{o}dinger equations with critical exponent: higher dimensional case, {\it Calc. Var. PDE.,} {\bf 52} (2015), 423–467.




\bibitem{FK} R. Frank, T. K\"{o}nig, Classification of positive solutions to a nonlinear biharmonic equation with critical exponent. {\it Anal. PDE,} {\bf 12} (2019), 1101–1113.



\bibitem{F} D. J. Frantzeskakis, Dark solitons in atomic Bose-Einstein condesates:
from theory to experiments, {\it J. Phys. A,} {\bf 43} (2010), 213001.

\bibitem{GKS} M. Ghergu, S. Kim, H. Shahgholian, Isolated singularities for semilinear elliptic systems with power-law nonlinearity, {\it Analysis and PDE,} {\bf 13} (2020), 701-739.

\bibitem{GH} J. Guckenheimer, P. Holmes, Nonlinear oscillations, dynamical systems, and bifurcations of vector fields, Springer-Verlag, New York, 1983. 

\bibitem{GL} Y. Guo, J. Liu, Liouville type theorems for positive solutions of elliptic system in $\RN$, {\it Comm. Partial Differ. Equ.,} {\bf 33} (2008), 263-284.

\bibitem{GHWW} Z. Guo, X. Huang, L. Wang, J. Wei, 
On Delaunay solutions of a biharmonic elliptic equation with critical exponent,
{\it J. Anal. Math.,} {\bf 140} (2020), 371–394.

\bibitem{HanQ} Q. Han, X. Li, Y. Li, Asymptotic expansions of solutions of the Yamabe equation and the
$\sigma_k$-Yamabe equation near isolated singular points. {\it Comm. Pure Appl. Math.,} {\bf 74} (2021), 1915–1970.

\bibitem{HXZ} Z. Han, J. Xiong, L. Zhang, Asymptotic behavior of solutions to the Yamabe equation
with an asymptotically flat metric. {\it J. Funct. Anal.,} {\bf 285} (2023), Paper No. 109982.

\bibitem{KL} Yu. S. Kivshar, B. Luther-Davies, Dark optical solitons: physics and applications, {\it Physics Reports} {\bf 298} (1998), 81-197.

\bibitem{KMPS} N. Korevaar, R. Mazzeo, F. Pacard, R. Schoen, Refined asymptotics for constant scalar curvature metrics with isolated singularites, {\it Invent. Math.,} {\bf 135} (1999), 233-272.

\bibitem{JX} T. Jin, J. Xiong,
Asymptotic symmetry and local behavior of solutions of higher order conformally invariant equations with isolated singularities.
{\it Ann. Inst. H. Poincar\'{e} C Anal. Non Lin\'{e}aire,} {\bf 38} (2021), 1167–1216.

\bibitem{Jost} J. Jost, Partial differential equations. Third edition.
{\it Grad. Texts in Math.,}
Springer, New York, 2013. xiv+410 pp.


\bibitem{Li1996} C. Li,  Local asymptotic symmetry of singular solutions to nonlinear elliptic equations.
{\it Invent. Math.} {\bf 123} (1996), 221–231.




\bibitem{LW1} T. Lin, J. Wei, Ground state of $N$ coupled nonlinear Schr\"{o}dinger equations
in $\R^n$, $n\le 3$, {\it Comm. Math. Phys.,} {\bf 255} (2005), 629-653.

\bibitem{Lions} P.-L. Lions, Isolated singularities in semilinear problems, {\it J. Differential Equations,} {\bf 38} (1980), 441–450.

\bibitem{LW} Z. Liu, Z.-Q. Wang; Multiple bound states of nonlinear Schr\"{o}dinger systems,
{\it Commun. Math. Phys.} {\bf 282} (2008), 721-731.

\bibitem{long2020} W. Long and S. Peng. Positive vector solutions for a Schr\"odinger system with external
 source terms. {\it Nonl. Differ. Equ. Appl. NoDEA.} {\bf 27} (2020), Paper No. 5, 36 pp.

\bibitem{MMP} L. Maia, E. Montefusco, B. Pellacci, Positive solutions for a weakly coupled nonlinear
Schr\"{o}dinger systems, {\it J. Differ. Equ.,} {\bf 229} (2006), 743-767.

 \bibitem{NTTV}
B. Noris, H. Tavares, S. Terracini and G. Verzini;  Uniform H\"{o}lder bounds for nonlinear Schr\"{o}dinger system with strong competition. {\it Comm. Pure Appl. Math.} {\bf 63} (2010), 267-302.

\bibitem{P} L. Perko; 
Differential equations and dynamical systems.
Third edition. Texts Appl. Math., 7. 
Springer-Verlag, New York, 2001. xiv+553 pp.

\bibitem{PS} A. Pistoia, N. Soave, 
On Coron's problem for weakly coupled elliptic systems.
{\it Proc. Lond. Math. Soc.} {\bf 116} (2018), 33–67.

 \bibitem{PT} A. Pistoia and H. Tavares; Spiked solutions for Schr\"odinger systems with sobolev critical exponent: the cases of competitive and weakly cooperative interactions. {\it J. Fixed Point
 Theo. Appl.} {\bf 19} (2017), 407–446.

\bibitem{QS} P. Quittner, P. Souplet, Optimal Liouville-type theorems for noncooperative
elliptic Schr\"{o}dinger systems and applications, {\it Comm. Math. Phys.,} {\bf 311} (2012), 1-19.






\bibitem{S} B. Sirakov, Least energy solitary waves for a system of nonlinear Schr\"{o}dinger equations
in $\R^n$, {\it Comm. Math. Phys.,} {\bf 271} (2007), 199-221.


\bibitem{TZ} S. Taliaferro, L. Zhang, Asymptotic symmetries for conformal scalar curvature equations with singularity, {\it Calc. Var. PDE.,} {\bf 26} (2006), 401-428.

 \bibitem{TV} S. Terracini and G. Verzini,  Multipulse phases in $k$-mixtures of Bose-Einstein condensates. {\it Arch. Ration. Mech. Anal.} {\bf 194} (2009), 717-741.




\bibitem{wei2020} J. Wei and Y. Wu; Ground states of nonlinear Schr\"odinger systems with mixed couplings. {\it J. Math. Pures Appl.} {\bf 141} (2020), 50–88.

\bibitem{YZ} H. Yang, W. Zou, Qualitative analysis for an elliptic system in the punctured space.
{\it Z. Angew. Math. Phys.,} {\bf 71} (2020), Paper No. 47, 21 pp.



\end{thebibliography}
\end{document}